\def\MC{\mathcal{M}}
\def\LC{\mathcal{L}}
\def\TC{\mathcal{T}}
\def\C{\mathbf{C}}
\def\E{\mathbf{E}}
\def\P{\mathbf{P}}
\def\R{\mathbf{R}}
\def\1{\mathbf{1}}
\def\tr{\rm{tr}}
\def\al{\alpha}
\def\be{\beta}
\def\pa{\partial}
\def\de{\delta}
\def\ga{\gamma}
\newtheorem{prop}{Proposition}[section]
\newtheorem{theorem}{Theorem}[section]
\newtheorem{lemma}{Lemma}[section]
\newtheorem{exer}{Exercise}[section]
\newtheorem{remark}{Remark}
\newcommand{\la}{\lambda}
\newcommand{\si}{\sigma}
\newcommand{\om}{\omega}
\newcommand{\Om}{\Omega}
\begin{document}
\title{Quantum games: a survey for mathematicians}

\author {
 {\it Vassili N. Kolokoltsov} \\ Department of Statistics, University of  Warwick,
\\
email: v.kolokoltsov@warwick.ac.uk
\\
 }
\date{This is a draft of the final chapter 13 of the Second Edition of the book V. Kolokoltsov and O. Malafeyev
"Understanding game theory", World Scientific, 2019}

\maketitle

\begin{abstract}
 Main papers on
quantum games are written by physicists for physicists, and the inevitable exploitation
of physics jargon may create difficulties for mathematicians or economists. Our
goal here is to make clear the physical content and to stress the new features
of the games that may be revealed in their quantum versions.
Some basic knowledge of quantum mechanics is a necessary prerequisite
for studying quantum games. The most fundamental facts are collected in
Section \ref{secpreliomfinitequ} describing closed finite-dimensional systems
and complemented by the rules of quantum measurements in Section \ref{secquantmes}.
These facts are sufficient for the main trend of our introductory exposition.
 However, for further developments of quantum games
one needs some notions of open quantum systems. They are presented in
Section \ref{secOpenSyst}, to which we refer occasionally, and which supplies the background
that is necessary for reading
modern research papers. The main sections \ref{secMeyGame}
-\ref{secVaronEWL} build the foundations of quantum games via the basic examples. We omit sometimes the
lengthy calculations (referring to the original papers) once the physical part
is sorted out and the problem is reformulated as pure game-theoretic problem
of calculating the Nash or dominated equilibria. Section \ref{secQGcontstate}
is devoted to quantum games arising from the classical games with infinite
state space (like the classical Cournot duopoly). Section \ref{secgenthequaga}
touches upon general theory of finite quantum static games. Further links
and references are provided in Section \ref{secfinconq}.
\end{abstract}

\section{On finite-dimensional quantum mechanics}
\label{secpreliomfinitequ}

Finite-dimensional quantum systems are described by
finite-dimensional complex Euclidean or Hilbert spaces $H=\C^n$,
 equipped with the standard scalar product $(.,.)$,
which is usually considered to be {\it anti-linear}\index{anti-linear map}
or {\it conjugate linear} with respect to the first argument,
$(a u,v)=\bar a (u,v)$ for $a \in \C$, and linear with respect to the second one:
$(v, a u)=a (v,u)$. For any orthonormal basis $e_1, \cdots, e_n$ in $H$,
the scalar product writes down in coordinates as $(u,v)=\sum_j \bar u_j v_j$,
where $u=\sum_j u_j e_j$, $v=\sum_j v_j e_j$.
The usual Euclidean norm is $\|x\|=\sqrt{(x,x)}$.
 These systems are referred to as {\it qubits}\index{qubits},
{\it qutrits}\index{qutrits} in case of $n=2$ or $3$,
and {\it qunits}\index{qunits} for general $n$ (or rather {\it qudits} with general $d$).

{\it Pure states}\index{pure states of a quantum system} of a quantum system
described by such space $H$
are unit vectors in $H$.

\begin{remark} More precisely, two vectors that differ by a multiplier
are considered to describe the same state, so that the state space is the projective space
$\C P^{k-1}$ of the equivalence classes of vectors with equivalence defined as proportionality.
\end{remark}

The space of all $n\times n$ matrices $A=(A_{ij})$ can be considered
as the space of all linear operators $\LC(\C^n)$ in $\C^n$. This correspondence
is described equivalently either via the action on the basis vectors as $Ae_j=\sum_k A_{kj}e_k$,
or in coordinates as $(Au)_j=\sum_k A_{jk}u_k$. $\LC(\C^n)$ is a space
of dimension $n^2$ with the usual operator norm defined as
\begin{equation}
\label{defopernorm}
\|A\|=\sup_{\|x\|=1}\|Ax\|.
\end{equation}

Similarly the space $\LC(H_1,H_2)$, $H_2=\C^m$, of linear operators $H_1\to H_2$
can be identified with the space of $m\times n$ matrices. It becomes a norm space
under the standard operator norm \eqref{defopernorm},
where $\|x\|$ is the norm in $H_1$ and $\|Ax\|$ is the norm in $H_2$.

The subspace of $\LC(\C^n)$ consisting of self-adjoint or Hermitian matrices,
defined by the equation $A^*=A$, where $A^*=\bar A^T$ ($T$ for transpose and
the bar for complex conjugation) will be denoted $\LC_s(\C^n)$. The trace of
$A\in \LC(\C^n)$ is defined as ${\tr} \, A=\sum_j A_{jj}$. It implies that
 ${\tr} (AB)= {\tr} (BA)$, which in turn  implies that ${\tr} (C^{-1}AC)={\tr} A$
 for any invertible $C$. Consequently one obtains  two other equivalent expressions
 for the trace of  $A\in \LC_s(\C^n)$: ${\tr} \, A=\sum_j \la_j$, where
 $\{\la_j\}$ is the collection of all eigenvalues of $A$ and
\[
{\tr} \, A=\sum_j (e_j, Ae_j),
\]
where $\{e_j\}$ is an arbitrary orthonormal basis in $\C^n$.

The space $\LC(\C^n)$ is a Hilbert space with respect to the scalar product
  \begin{equation}
\label{scalprodoper}
(A,B)_{HS}={\tr} (A^*B)=\sum_{i,j}\bar A_{ij}B_{ij},
\end{equation}
 with the corresponding norm called the {\it Hilbert-Schmidt norm}\index{Hilbert-Schmidt norm}
\[
\|A\|_{HS}=[{\tr} (A^*A)]^{1/2}.
\]
For Hermitian operators it simplifies to ${\tr} (A^*B)={\tr} (AB)$.

Any  $A\in \LC_s(\C^n)$ is diagonizable, meaning that there exists a unitary
$U$ such that $A=U^*DU$, where $D=D(\la_1, \cdots, \la_n)$ is diagonal with the
eigenvalues $\{\la_j\}$ of $A$ on the diagonal. Then $|A|$ is defined as the
positive operator $|A|=U^*|D|U$ with $|D|=D(|\la_1|, \cdots, |\la_n|)$,
the diagonal operator with the numbers $\{|\la_j|\}$ on the diagonal.
The functional $A\mapsto {\tr} (|A|)$ defines yet another norm on $\LC_s(\C^n)$,
the {\it trace norm}\index{trace norm}:
\[
\|A\|_{tr}= {\tr} (|A|)=\sum_j |\la_j|.
\]

The key point is that this norm is dual to the usual operator norm with respect
to the duality provided by the trace: for $A\in \LC_s(\C^n)$,
  \begin{equation}
\label{dualopernorm}
\|A\|_{tr} =\sup_{\|B\|=1} |{\tr} (AB)|, \quad \|A\| =\sup_{\|B\|_{tr}=1} |{\tr} (AB)|.
\end{equation}

To show these equations it is handy to work in the basis where $A$ is diagonal:
$A=D(\la_1, \cdots, \la_n)$. Then ${\tr}(AB)=\sum_j\la_j B_{jj}$.
Choosing $B$ to be diagonal with $B_{ij}$ equal the sign of $\la_j$ it follows that
\[
\sup_{\|B\|=1} |{\tr} (AB)|\ge \sum_j |\la_j|=\|A\|_{tr}.
\]
On the other hand,
\[
\sup_{\|B\|=1} |{\tr} (AB)|=
\sup_{\|B\|=1} |\sum_j\la_j B_{jj}|
\le  \sum_j |\la_j| \max_j|B_{jj}|
\le  \sum_j |\la_j|.
\]
Therefore the first equation in \eqref{dualopernorm} is proved. The second equation is proved similarly.

For two spaces $H_1=\C^n$ and $H_2=\C^m$ with orthonormal bases $e_1, \cdots, e_n$
and $f_1, \cdots f_m$ the {\it tensor product space}
$H_1 \otimes H_2$ can be defined as the $nm$-dimensional space generated by vectors denoted
$e_j\otimes f_k$, so that any $\psi \in H_1\otimes H_2$ can be represented
as
  \begin{equation}
\label{eqtensprodbas}
 \psi=\sum_{j=1}^n \sum_{k=1}^m a_{jk}  e_j \otimes f_k.
\end{equation}
 The {\it tensor product}\index{tensor product} of any two-vectors $u=\sum_j u_j e_j$,
$v=\sum_k v_k f_k$ is defined as the vector
\[
 u\otimes v=\sum_{j,k} u_jv_k e_j\otimes f_k.
\]

Similarly, the tensor product is defined for several Hilbert spaces. Namely,
the product $H=H_1\otimes H_2 \otimes \cdots \otimes H_K$ of spaces of dimensions $n_1 \cdots , n_K$
 can be described as the $n=\prod n_j$-dimensional space generated by vectors denoted
$e^1_{j_1}\otimes \cdots \otimes e^K_{j_K}$ and called the tensor products of
$e^1_{j_1}, \cdots , e^K_{j_K}$, where $\{e^m_1, \cdots , e^m_{n_m}\}$ is an orthonormal basis in $H_m$.

Any $\psi \in H_1\otimes H_2$ can be represented by the so-called
{\it Schmidt decomposition}\index{Schmidt decomposition}
  \begin{equation}
\label{eqSchmidtDecomp}
\psi=\sum_{j=1}^{\min(n,m)} \la_j \xi_j \otimes \eta_j,
\end{equation}
where $\la_j\ge 0$, and $\{\xi_j\}$ and $\{\eta_j\}$ are some orthonormal bases in $H_1$ and $H_2$.
In fact, one just has to write the singular decomposition of the matrix $A=(a_{jk})$ from \eqref{eqtensprodbas},
namely to represent it as $A=UDV^T$, where $D=D(\la_1, \la_2, \cdots)$ is diagonal
of dimension $m\times n$ and $U,V$ are unitary matrices. Here $\la_j^2$ are the (common)
eigenvalues of the matrices $A^*A$ and $AA^*$ with $\la_j\ge 0$. Then the vectors $\xi_j=U_{lj}e_l$ and
$\eta_k=V_{lj}f_l$ form orthonormal bases in $H_1$ and $H_2$ and
\[
\psi=\sum_{j,k,l} U_{jl}\la_l V_{kl}  e_j \otimes f_k
\]
equals \eqref{eqSchmidtDecomp}.

Pure states of the tensor product $\psi \in H_1\otimes H_2$ are called
{\it entangled}\index{entangled states} (the term introduced by E. Schr\"odinger in 1935)
if they cannot be written in the product form
$\psi=u\otimes v$ with some $u,v$. It is seen that $\psi$ is not entangled if and only if
its Schmidt decomposition has only one nonzero term. On the other hand, a pure state
$\psi \in H_1\otimes H_2$ is called {\it maximally entangled}\index{entangled states!maximally},
if its  Schmidt decomposition has the maximal
number of nonvanishing $\la_j$ and they all are equal, and thus they equal $1/\sqrt{\min(n,m)}$.

In physics one usually works in {\it Dirac's notations}\index{Dirac's notations}.
In these notations usual vectors $u=\sum_j u_j e_j \in H$ are referred
 to as {\it ket-vectors}, are denoted $|u\rangle$ and are considered to be
 column vectors with coordinates $u_j$. The corresponding {\it bra-vectors}
are denoted $\langle u|$ and are considered to be row vectors with coordinates
 $\bar u_j$. These notations are convenient, because they allow to represent
both scalar and tensor products as usual matrix multiplications: for two
ket-vectors $|u\rangle$ and $|v\rangle$ we have
\[
\langle u| v\rangle=\langle u|.| v\rangle =(u,v)=\sum_j \bar u_j v_j,
\]
\[
| v\rangle \langle u|=v\otimes \bar u =\sum_{j,k}  v_j\bar u_k e_j\otimes e_k.
\]
Therefore the latter product is often identified with the $n\times n$-matrix $\rho$
with the entries $ \rho_{jk}=v_j\bar u_k$.
As matrices, they act on vectors $w=|w\rangle$ in the natural way:
\[
\rho w=| v\rangle \langle u| w\rangle,
\quad (\rho w)_j= \sum_k \rho_{jk} w_k =v_j\sum_k \bar u_k w_k=v_j\langle u| w\rangle.
\]
On the other hand, the bra-vectors form the space $H^*$
 of linear functionals on $H$ specified via the scalar product.

\begin{remark} It is worth stressing that the operation of conjugation (usually denoted
by bar or a star) in a Hilbert space $H$ (defined as an anti-linear convolution map $A:H\to H$,
the latter meaning that $A^2=\1$) is not unique and depends on a chosen 'real' basis.
For instance, in $\C=\R^2$, a reflection with respect to any real line in $\C$
(chosen to be real in $\C$), or analytically, any map
of type $v\to \bar v e^{i\phi}$ with any real $\phi$  defines such a convolution.
\end{remark}

Continuing the analogy we see that the tensor product $H_1\otimes H_2^*$ is naturally isomorphic to
the space of linear operators  $\LC(H_2,H_1)$. Namely, for orthnormal bases $\{e_i\}, \{f_j\}$ in
$H_1$ and $H_2$ any
\[
X=\sum_{i,j} X_{ij} e_i \otimes \bar f_j\in H_1 \otimes H_2^*
\]
can be identified with the operator
\begin{equation}
\label{tensorprodcoor}
X=\sum_{ij} X_{ij}|e_i\rangle \langle f_j|: f_k \mapsto \sum_i X_{ik} |e_i\rangle
\end{equation}
with the matrix $X_{ij}$.
These matrix elements can be written in two equivalent forms:
\begin{equation}
\label{tensorprodcoor2}
X_{ij}= \langle e_i|X f_j\rangle={\tr}(X \, |f_j\rangle \langle e_i| ).
\end{equation}

{\it General or mixed states}\index{mixed states of a quantum system}, also
referred to as {\it density matrices}\index{density matrix} or {\it density operators}
of a qunit are defined to be non-negative $n\times n$-matrices $\rho$ with
 unit trace: ${\tr} \, \rho=1$. The {\it state space of a qunit} is usually defined either as
 the set of all density matrices (which is not a linear space) or the linear space
 generated by this set, that is, the space
 of all self-adjoint (or Hermitian) matrices equipped with the trace-norm and thus
 denoted $\TC_s(\C^n)$. The cones of positive elements of $\TC_s(\C^n)$ or $\LC_s(\C^n)$
 are denoted $\TC^+(\C^n)$ and $\LC^+(\C^n)$.

Any pure state $|u\rangle$ defines the density matrix $\rho= | u\rangle \langle u|$,
which is a one-dimensional projector. Thus pure states are naturally inserted in the
 set of all states. Moreover, if $\{e_j\}$ is an orthonormal basis in $H$, then the
matrices $|e_i\rangle\langle e_j|$ of rank $1$ form an orthonormal basis in $\LC_s(H)$
with respect to the scalar product   \eqref{scalprodoper}.
The quantitative deviation of a state from being pure is usually assessed either via the
{\it entropy}\index{entropy} of a state $S(\rho)=-{\tr} [\rho \ln (\rho)]$ (in a basis where
$\rho=D(\la_1, \cdots, \la_n)$ is diagonal, $S(\rho)=-\sum_j \la_j \ln \la_j$) or the
{\it purity}\index{purity} of a state, $P(\rho)={\tr} [\rho^2]$, because, as is seen directly,
$S(\rho)=0$ (respectively $P(\rho)=1$) if and only if $\rho$ is pure.

Mixed states $\rho$ in $H_1\otimes H_2$ are called {\it separable}
\index{mixed states of a quantum system!separable}, if they can be represented as
\[
\rho=\sum_j p_j \rho_j^1 \otimes \rho^2_j
\]
with some finite collection of states $\rho_j^1$ and $\rho_j^2$, and some $p_j>0$.
 Otherwise they are called
{\it entangled}.\index{mixed states of a quantum system!entangled}

\begin{exer} If $\rho \in H_1\otimes H_2$ is pure, $\rho=|\psi\rangle \langle \psi|$, then it is
separable if and only if $\psi=u\otimes v$ with some $u$, $v$. Equivalently, pure
  $\rho=|\psi\rangle \langle \psi|$ is entangled  if and only if $\psi$ is entangled.
\end{exer}

Possible {\it transformations of closed quantum systems} are assumed to be
given by {\it unitary matrices} $U$: $UU^*=U^*U=\1$. They act on vectors as
usual left multiplication: $w\mapsto Uw$, or in Dirac's notation
$|w\rangle \mapsto U |w\rangle=|Uw\rangle$, and on the density matrices by the
"dressing":
\begin{equation}
\label{eqdressingdef}
\rho \mapsto U \rho U^*.
\end{equation}
These actions are consistent
with the identification of vectors and pure states, since
\[
[U(\phi \otimes \bar \psi)U^*]_{ij}=\sum_{kl}U_{ik}\phi_k\bar \psi_l U^*_{lj}
\]
\[
=\sum_{kl}U_{ik}\phi_k\bar \psi_l \bar U_{jl}=(U\phi)_i (\overline{U\psi})_j
=(U\phi \otimes \overline{U\psi})_{ij}.
\]
The group of unitary matrices in $\C^n$ is denoted $U(n)$ and its
subgroup consisting of matrices with the  unit determinant is denoted $SU(n)$.

Of particular importance is the qubit arising from two-dimensional space, with the basis
\[
e_0=\left(\begin{aligned}
& 1 \\
& 0
\end{aligned}
\right)=|0\rangle, \quad e_1=\left(\begin{aligned}
& 0 \\
& 1
\end{aligned}
\right)=|1\rangle.
\]
As seen by direct inspection, the state space $\TC_s(\C^2)$ of the qubit is $4$- dimensional real space,
the most convenient basis given by the unity matrix $\1=I$ (we shall use both notations)
and the three {\it Pauli matrices}\index{Pauli matrices} (we show all three standard notations),
\[
\si_1=\si_x=X=\left(\begin{aligned}
& 0 \quad 1 \\
& 1 \quad 0
\end{aligned}
\right),
\quad
 \si_2=\si_2=Y=\left(\begin{aligned}
& 0 \quad -i \\
& i \quad \quad 0
\end{aligned}
\right),
\quad
\si_3=\si_z=Z=\left(\begin{aligned}
& 1 \quad \quad 0 \\
& 0 \quad -1
\end{aligned}
\right).
\]
Any density matrix of a qubit can be written uniquely in the form
\begin{equation}
\label{eqBlochspherestate}
\rho= \frac12 \left(\begin{aligned}
& 1+x_3 \quad x_1-x_2i \\
& x_1+x_2i \quad 1-x_3
\end{aligned}
\right)=\frac12(I+x_1\si_1+x_2 \si_2 +x_3 \si_3)
\end{equation}
with real $x_1,x_2,x_3$ satisfying $x_1^2+x_2^2+x_3^2\le 1$ (this is seen to be the
condition of positivity). These $x_1,x_2,x_3$ are called the {\it Stokes parameters}
\index{Stokes parameters} of a density matrix. Thus qubit is topologically a unit ball,
often referred to as the {\it Bloch sphere}\index{Bloch sphere}. Pure states are
distinguished by the property $\det \rho=0$, so that the pure states are characterized
by the condition $x_1^2+x_2^2+x_3^2= 1$ and thus form a two-dimensional sphere.

The group $SU(2)$ has many useful representations (regularly used in physics) that we
shall describe now. Direct inspection shows that any $U\in SU(2)$ has the form
  \begin{equation}
\label{eqgenunit2dim1}
U= \left(\begin{aligned}
& \quad u  \quad  v \\
& -\bar v  \quad \bar u
\end{aligned}
\right), \quad |u|^2+|v|^2=1.
\end{equation}
Writing $|u|=\cos\theta, |v|=\sin \theta$, for $\theta \in [0,\pi/2]$ we can represent
$u=e^{i\phi} \cos \theta$, $v=e^{i\psi} \cos \theta$ with some $\phi, \psi \in [-\pi, \pi]$
and thus any $U$ in $SU(2)$ as
  \begin{equation}
\label{eqgenunit2dim2}
U= \left(\begin{aligned}
& \quad e^{i\phi} \cos \theta  \quad  e^{i\psi} \sin \theta \\
& -e^{-i\psi} \sin \theta  \quad e^{-i\phi} \cos \theta
\end{aligned}
\right).
\end{equation}

Changing $\psi$ to $\psi+\pi/2$ it can be also equivalently written as
   \begin{equation}
\label{eqgenunit2dim3}
U= \left(\begin{aligned}
& \,\, e^{i\phi} \cos \theta  \quad  \,\, ie^{i\psi} \sin \theta \\
& ie^{-i\psi} \sin \theta  \quad  e^{-i\phi} \cos \theta
\end{aligned}
\right).
\end{equation}

Since for any operator $A$ such that $A^2=\1$, we have
\[
\exp\{ixA\}=\cos x \1 +i\sin x \, A,
\]
for any real $x$, it follows that
  \begin{equation}
\label{eqgenunit2dim3a}
e^{ibZ}=\left(\begin{aligned}
&  e^{ia} \quad \,\, 0 \\
& 0 \quad e^{-ia}
\end{aligned}
\right),
\quad
e^{icY}=\left(\begin{aligned}
& \quad \cos c \quad \sin c  \\
& -\sin c \quad \cos c
\end{aligned}
\right),
\end{equation}
so that
   \begin{equation}
\label{eqgenunit2dim4}
 e^{ibZ}e^{icY}e^{idZ}=
 \left(\begin{aligned}
& \quad e^{i(b+d)} \cos c  \quad  \,\, e^{i(b-d)} \sin c \\
& -e^{-i(b-d)} \sin c  \quad  e^{-i(b-d)} \cos c
\end{aligned}
\right).
\end{equation}
Comparing with   \eqref{eqgenunit2dim3} we see that \eqref{eqgenunit2dim4} is yet another way
to represent arbitrary element of $SU(2)$. This way is referred to as the $Z-Y$ decomposition or
the {\it Cartan decomposition} of the elements of $SU(2)$.

Finally, \eqref{eqgenunit2dim2} can be rewritten as
   \begin{equation}
\label{eqgenunit2dim5}
 U=\cos \phi \cos \theta \1 +i \sin \psi \sin \theta \si_x
 +i\cos \psi \sin \theta \si_y +i \sin \phi \cos \theta \si_z,
\end{equation}
and thus
   \begin{equation}
\label{eqgenunit2dim5a}
 U=u_0 \1 +i u_1 \si_1
 +i u_2 \si_2 +i u_3 \si_3,
\end{equation}
with real $u_k$ satisfying the condition $u_0^2+u_1^2+u_2^2+u_3^2=1$.
this representation shows that topologically $SU(2)$ is a unit sphere in $\R^4$.

 Any element $A$ of $\TC_s(\C^2)$ can be written in unique way as
\begin{equation}
\label{eqrepqubitstatespace}
A=x_0 I +x_1\si_1 +x_2\si_2 +x_3\si_3
\end{equation}
with real $x_j$.

Let us see how $u\in SU(2)$ act on $\TC_s(\C^2)$ via the dressing $A\mapsto UAU^*$:
$e^{i\phi \si_3}$ leaves $\si_3$ invariant and acts on $\si_1,\si_2$ as rotation with the matrix
\[
\left(\begin{aligned}
& \quad  \cos (2\phi)  \quad  - \sin (2\phi) \\
& -\sin (2\phi)  \quad  \quad \cos (2\phi)
\end{aligned}
\right);
\]
and similarly,
$e^{i\psi \si_2}$ leaves $\si_2$ invariant and acts on $\si_1,\si_3$ as rotation with the matrix
\[
\left(\begin{aligned}
& \quad  \cos (2\psi)  \quad  - \sin (2\psi) \\
& -\sin (2\psi)  \quad  \quad \cos (2\psi)
\end{aligned}
\right).
\]
But these rotations generate the group $SO(3)$ in $\R^3$ with the basis $\si_1, \si_2, \si_3$. Hence
any rotation in this space can be achieved via dressing with certain $u\in SU(2)$.

How to represent elements of $O(3)\setminus SO(3)$ via dressing? Here one needs anti-unitary operators.

A mapping $A:H\to H$ is called {\it ani-linear}\index{anti-linear map}
 or {\it conjugate-linear} if $A(v+w)=Av+Aw$ for $v,w\in H$ and
$A(a v)=\bar a v$, $a\in \C$. The simplest example is the equivalence between bra and ket vectors:
$|x\rangle \mapsto \langle x|$, or equivalently, just the mapping $\psi\to \bar \psi$.

To any linear map $A:H\to H$ there corresponds an anti-linear map
$\tilde A: v\mapsto A \circ s (v)$, where $s$ is the conjugation: $s(v)=\bar v$.
 If $A$, $B$ and $\rho$ are linear operators, then
$\tilde A \rho \tilde B$ is also a linear operator given by the matrix $A\bar \rho \bar B$.

An anti-linear map $A$ is called {\it anti-unitary}\index{anti-unitary maps}
 if $(Av,Aw)=(w,v)=(\bar v, \bar w)$. It is seen that $U$ is unitary if and only if
$\tilde U=U\circ s$ is anti-unitary, and for any two unitary operators $U,V$ the operators
$\tilde U \circ V$ and $U\circ \tilde V$ are anti-unitary. The famous {\it Wigner theorem}
\index{Wigner theorem} states that any mapping $U$ in a Hilbert space of dimension $d>2$
such that $|(Uu,Uv)|=|(u,v)|$ for all vectors $u,v$ is either unitary or anti-unitary.

\begin{exer} As an elementary version of the Wigner theorem show that any mapping
$M:\R^n\to \R^n$, which either preserves the scalar product or is continuous and preserves
the magnitude of the scalar product, is necessarily a linear orthogonal operator.
\end{exer}

If $\tilde U=U\circ s$ is an anti-unitary operator in $\C^2$, then the mapping
\begin{equation}
\label{eqantiunitdress}
\rho \mapsto \tilde U\rho \tilde U^{-1}=U \bar \rho \overline{U^{-1}} =U \bar \rho U^T
\end{equation}
acting on matrices \eqref{eqrepqubitstatespace} preserves $x_0$ and transforms
$x=(x_1,x_2,x_3)$ to $Ox$ with some orthogonal operator $O$ with $\det O=-1$;
and vice versa, any such transformation $O$ can be obtained
in this way. To prove this claim it is sufficient to show that
such representation is available for the reflection $R:(x_1,x_2,x_3)\mapsto (x_1, -x_2,x_3)$,
because  any orthogonal transformation $O$ with $\det O=-1$ can be written
as $O=R \circ S$ with $S\in SO(3)$. The reflection $R$ can be obtained from the anti-unitary
operator $\tilde U$ with $U= e^{ibZ}$ from \eqref{eqgenunit2dim3a}. In fact, we see by
\eqref{eqantiunitdress} that
\[
\tilde U \rho \tilde U^{-1}= \left(\begin{aligned}
&  e^{ia} \quad \,\, 0 \\
& 0 \quad e^{-ia}
\end{aligned}
\right) \bar \rho \left(\begin{aligned}
&  e^{ia} \quad \,\, 0 \\
& 0 \quad e^{-ia}
\end{aligned}
\right)
\]
and hence
\[
\tilde U \si_1 \tilde U^{-1}=\si_1,
\quad \tilde U \si_2 \tilde U^{-1}=-\si_2,
\quad \tilde U \si_3 \tilde U^{-1}=\si_3.
\]

For two qubits one has a straightforward criterion for states to be entangled. In fact, if
$\psi=\psi_0 |0\rangle+\psi_1 |1\rangle$, $\phi=\phi_0 |0\rangle+\phi_1 |1\rangle$, then
\begin{equation}
\label{eqproducttwoqubitsv}
\psi \otimes \phi =\psi_0 \phi_0 |00\rangle+ \psi_0 \phi_1 |01\rangle +\psi_1 \phi_0 |10\rangle +\psi_1 \phi_1 |11\rangle,
\end{equation}
where we use the standard notation for the products of the basis vectors:
\begin{equation}
\label{eqcomputbastwoqubit}
|00\rangle=|0\rangle \otimes |0\rangle,  \,\,
|01\rangle=|0\rangle \otimes |1\rangle,  \,\,
|10\rangle=|1\rangle \otimes |0\rangle,  \,\,
|11\rangle=|1\rangle \otimes |1\rangle.
\end{equation}
An arbitrary state in $\C^2\otimes \C^2$ can be written as
\begin{equation}
\label{eqgebtwoqubitst}
\xi =\xi_{00} |00\rangle+ \xi_{01} |01\rangle +\xi_{10} |10\rangle +\xi_{11} |11\rangle.
\end{equation}
Comparing with \eqref{eqproducttwoqubitsv} it is seen that \eqref{eqgebtwoqubitst} is not entangled
(is a product state) if and only if
\begin{equation}
\label{eqtwoqubitentangcond}
\xi_{00}\xi_{11} = \xi_{10}\xi_{01}.
\end{equation}

Apart from the standard basis \eqref{eqcomputbastwoqubit} a key role in application is played
by the {\it Bell basis}\index{Bell basis} consisting of fully entangled states:
\begin{equation}
\label{eqBellbasis}
\frac{1}{\sqrt 2}(|00\rangle+|11\rangle), \frac{1}{\sqrt 2}(|00\rangle-|11\rangle),
\frac{1}{\sqrt 2}(|01\rangle+|10\rangle), \frac{1}{\sqrt 2}(|01\rangle-|10\rangle).
\end{equation}
Specific role belongs also to the last vector $(|01\rangle-|10\rangle)/\sqrt 2$, referred to
as the {\it singleton}, because it is rotationally invariant. Namely, the group $SU(2)$ acts naturally in
$\C^2\otimes \C^2$ as $u(\phi \otimes \psi)=u(\phi)\otimes u(\psi)$, and the vector
$(|01\rangle-|10\rangle)/\sqrt 2$ is invariant under this action.

\begin{exer} Check this invariance.
\end{exer}

\begin{remark} The above mentioned action of $SU(2)$ on $\C^2\otimes \C^2$ decomposes into the direct sum
of two irreducible representations, one-dimensional one generated by $(|01\rangle-|10\rangle)/\sqrt 2$
and three-dimensional one, generated by other tree vectors of the Bell basis. Therefore these three vectors
are referred to as a triplet.
\end{remark}

Up to the phase shifts (that is, up to multiplications by numbers of unit magnitude),
arbitrary orthonormal basis in a qubit can be given by the vectors
\begin{equation}
\label{eqqubitbas}
e_0^{\be \phi} =\left(
\begin{aligned}
& \quad \cos(\be/2) \\
& \sin(\be/2) e^{i\phi}
\end{aligned}
\right),
\quad
e_1^{\be \phi} =\left(
\begin{aligned}
& -\sin(\be/2)  e^{-i\phi}\\
& \quad \cos(\be/2)
\end{aligned}
\right),
\end{equation}
which can be obtained by acting on the standard basis by the operator $U_{\be \phi}$:
\[
U_{\be \phi}=\left(
\begin{aligned}
& \quad \cos(\be/2) \quad  -\sin(\be/2)  e^{-i\phi} \\
& \sin(\be/2) e^{i\phi} \quad \quad  \cos(\be/2)
\end{aligned}
\right), \quad U_{\be\phi}e_j=e_j^{\be\phi}, \, j=0,1.
\]
These vectors are eigenvectors of the operator
\[
S_{\be\phi}=U_{\be\phi} \si_3 U^*_{\be\phi}
=\left(
\begin{aligned}
& \quad \cos \be \quad  \sin \be  e^{-i\phi} \\
& \sin \be e^{i\phi} \quad -  \cos\be
\end{aligned}
\right) =\sin \be \cos \phi \si_x +\sin \be \sin \phi \si_y +\cos \be  \si_z,
\]
which can be considered as the projection of the matrix-valued
{\it spin-vector}\index{spin vector}
${\bf \si}=(\si_x, \si_y, \si_z)$ on the unit vector
$n=(\sin \be \cos \phi, \sin \be \sin \phi,\cos \be) \in \R^3$,
and therefore referred to as the {\it component of the spin in the direction}
\index{spin vector!component} $n$.

This basis can be used to demonstrate that the Schmidt decomposition
\eqref{eqSchmidtDecomp} is not unique, but the notion of
maximally entangled state is still well defined. In fact,
one sees directly that maximally entangled  vector
$(|00\rangle+|11\rangle)/\sqrt 2$ can be written analogously in other bases $e_j^{\be \phi}$:
\begin{equation}
\label{eqnotuniqueschmi}
\frac{1}{\sqrt 2} (|00\rangle+|11\rangle)
=\frac{1}{\sqrt 2}\left(e_0^{\be \phi}\otimes \overline{e_0^{\be \phi}}+e_1^{\be \phi}\otimes \overline{e_1^{\be \phi}}\right).
\end{equation}

Also another way to express the rotation invariance of the singleton state is to observe that it has the same
form when expressed in any pair  \eqref{eqqubitbas}:

\begin{equation}
\label{eqnotuniqueschmi1}
\frac{1}{\sqrt 2}(|01\rangle-|10\rangle)
=\frac{1}{\sqrt 2}\left(e_0^{\be \phi}\otimes e_1^{\be \phi}-e_1^{\be \phi}\otimes e_0^{\be \phi}\right).
\end{equation}

\begin{exer} Check \eqref{eqnotuniqueschmi} and \eqref{eqnotuniqueschmi1}.
\end{exer}

\section{Measurement in quantum mechanics}
\label{secquantmes}

Measurements in quantum mechanics occur via an interaction of the measured
quantum system described by the Hilbert space $H$ with another system,
an apparatus, so that this interaction changes the state of the initial system.
{\it Physical observables} are given by self-adjoint matrices $A\in \LC(H)$. Such
matrices $A$ are known to have the spectral decomposition $A=\sum_j \la_j P_j$, where
 $P_j$ are orthogonal projections on the eigenspaces of $A$ corresponding to
 the eigenvalues $\la_j$. According to the {\it basic postulate of quantum measurement}
 \index{basic postulate of quantum measurement}, measuring observable $A$ in a state $\rho$
(often referred to as the {\it Stern-Gerlach experiment}\index{ Stern-Gerlach experiment})
can yield each of the eigenvalue $\la_j$ with the probability
   \begin{equation}
\label{eqantiunitdress1}
{\tr} \, (\rho P_j),
\end{equation}
 and, if the value $\la_j$ was obtained, the state
of the system changes to the reduced state
\[
P_j\rho P_j/ {\tr} \, (\rho P_j).
\]
In particular, if the state $\rho$ was pure, $\rho=|\psi\rangle \langle \psi|$, then the
probability to get $\la_j$ as the result of the measurement becomes $(\psi_,P_j\psi)$
and the reduced state also remains pure and is given by the vector $P_j\psi$.
If the interaction with the apparatus was preformed 'without reading the results',
the state $\rho$ is said to be subject to a {\it non-selective measurement}
\index{non-selective measurement} that changes
$\rho$ to the state $\sum_j P_j\rho P_j$.

\begin{remark} The notion of a general state (a density matrix) arises naturally from the simple
duality \eqref{dualopernorm} (with a bit more nontrivial extension to the
case of infinite-dimensional spaces). In fact, von Neumann introduced a state as a linear
functional on the space of observables (Hermitian linear operators) that was supposed to
show the results of possible measurements of all these observables. By
\eqref{dualopernorm}, this led inevitably to the notion of a mixed state as given above.
\end{remark}

Extended to all self-adjoint operators the transformation
\begin{equation}
\label{eqantiunitdresscondexp}
E:B \mapsto \sum_j P_jB P_j
\end{equation}
is sometimes called the {\it conditional expectation}\index{conditional expectation in quantum systems}
 from $\LC(H)$ to the subalgebra
$N\subset \LC(H)$ of operators that commute with all $P_j$. The conditional expectation
is seen to satisfy the following properties reminiscent to its classical counterpart:
(i) $E(X^*)=[E(X)]^*$, (ii) $X\ge 0$ implies $E(X) \ge 0$, (iii) $E(X)=X$ if and only if $X\in N$,
(iv) If $X_1,X_2 \in N$, then $E(X_1BX_2)=X_1E(B)X_2$ (take-out-what-we-know property).
Another accepted term for \eqref{eqantiunitdresscondexp} is the {\it pinching map}.

For instance, applying the above scheme to the Pauli operator $\si_3$ of a qubit,
allows one to conclude that, assuming the state of a qubit is some $\rho$, the measurement
would reveal the values $1$ or $-1$ corresponding to the pure states $e_0$ or $e_1$, with the
probabilities
\begin{equation}
\label{eqmesqubitsi3}
p_0={\tr}\, (\rho |e_0\rangle \langle e_0|)=\langle e_0|\rho|e_0\rangle =\rho_{00},
\quad p_1=\rho_{11},
\end{equation}
showing in particular that the condition for density matrices to have the unit trace is necessary
to make the probabilistic postulate of quantum measurement consistent. This measurement
also corroborates the interpretation of the states $\rho$ being the mixture of the pure states
$|e_0\rangle \langle e_0|$ and $|e_1\rangle \langle e_1|$ with probabilities $p_1$ and $p_2$:
\begin{equation}
\label{eqmixpuredirect}
\rho= p_0 |e_0\rangle \langle e_0| +p_1 |e_1\rangle \langle e_1|.
\end{equation}
However, all $\rho$ with the same diagonal elements as in \eqref{eqmixpuredirect}
 will give the same result under this measurement.

In representation \eqref{eqBlochspherestate} the probabilities $p_0, p_1$
from \eqref{eqmesqubitsi3} take the values
\begin{equation}
\label{eqmesqubitsi3stok}
p_0=(1+x_3)/2, \quad p_1=(1-x_3)/2.
\end{equation}

More generally, if the state of $n$ dimensional system is $\rho$ and we performed the measurement
of any observable $A$ that is diagonal in the standard basis $e_1, \cdots ,e_n$ of $\C^n$, the probability
to obtain $e_j$ as the result of the measurement will be
\begin{equation}
\label{eqmesqunit}
\langle e_j| \rho | e_j\rangle=\rho_{jj}.
\end{equation}

As seen from this formula all $A$ which are diagonal in the basis $e_1, \cdots , e_n$
and have different eigenvalues produce the same probabilities of finding $e_j$,
Thus effectively we are measuring  the operator that labels the elements of the basis.
This calculation is often referred to as measurement in the {\it computational basis}
$e_1, \cdots , e_n$.
If we are working in the product of two qubits $\C^2\otimes \C^2$, which is the most basic
scene for two-player two-action quantum games, the simplest computational basis is \eqref{eqcomputbastwoqubit}.
For a state $\rho \in \TC(\C^2)\otimes \TC(C^2)$ the measurement in this computational basis
will produce any of this vectors with the probabilities
\begin{equation}
\label{eqprobmesprod}
\langle jk | \rho | jk\rangle ={\tr} (\rho  | jk\rangle \langle jk|),
\end{equation}

For a tensor product of two spaces $H_A$ and $H_B$, with bases $\{e_j^A\}$ and $\{e_j^B\}$,
the matrix elements of the linear operators
(for instance, density matrices) are defined as
\[
\rho (e_i^A\otimes e_j^B)= \sum_{i',j'} \rho_{i'j',ij}(e_{i'}^A\otimes e_{j'}^B),
\]
and the probability to obtain $e_i^A \otimes e_j^B$ with the measurement performed on $\rho$
(in the computational basis $e_i^A \otimes e_j^B$) is the diagonal element $ \rho_{ij,ij}$.
In particular,
\[
(\rho^A \otimes \rho^B)_{i'j', ij}= \rho^A_{i'i} \rho^B_{j'j},
\]
and the probability to obtain $e_i^A \otimes e_j^B$ with the measurement performed on
$\rho^A \otimes \rho^B$ is the product
\begin{equation}
\label{eqcomputbastwoqunit}
\rho^A_{ii} \rho^B_{jj}.
\end{equation}
If we are in the pure state
\[
\rho=|\psi\rangle \langle \psi|,
\quad \psi=\sum_{i,j} \psi_{ij} e_i^A \otimes e_j^B,
\]
then this probability reduces to $|\psi_{ij}|^2$.

As we shall see, most of the quantum games can be ultimately reformulated in classical terms.
However, specific feature lies in the physical realizability of the strategies involved.
For instance, all unitary operators on a single qubit can be realized by the
{\it Mach-Zender interferometer}\index{Mach-Zender interferometer},
which manipulates with photons, whose two states are usually denoted
$|R\rangle =|0\rangle$ and $|L\rangle=|1\rangle$ (for right and left polarization),
and which is built from the following three units (referred to as {\it passive optical devices}
\index{passive optical devices} that form the {\it standard linear optics toolbox}):
(1) {\it beam splitter (BS)}\index{beam splitter} preforming the unitary transformation $U_{BS}$:
\[
U_{BS}= \frac{1}{\sqrt 2}\left(|R\rangle \langle R| +|L\rangle \langle L| +i |R\rangle \langle L|+i|L\rangle \langle R|\right)
=\frac{1}{\sqrt 2}(\1 +i\si_1),
\]
\begin{equation}
\label{eqbeamsplitter}
U_{BS}(|R\rangle)=\frac{1}{\sqrt 2}(|R\rangle+i|L\rangle),
\quad U_{BS} (|L\rangle)=\frac{1}{\sqrt 2}(|L\rangle+i|R\rangle),
\end{equation}
(2) {\it mirror operator}\index{mirror operator} $U_{mir}=-i\si_1$ that takes
$|R\rangle$ to $i|L\rangle$ and  $|L\rangle$ to $i|R\rangle$,
and (3) {\it phase shifters}\index{phase shifter}
\begin{equation}
\label{eqphaseshifters}
U_R(\phi)=  |R\rangle e^{i\phi} \langle R| +|L\rangle \langle L|=\left(\begin{aligned}
& e^{i\phi} \quad 0 \\
& \,\, 0  \quad  0
\end{aligned}
\right),
\quad
U_L(\phi)=  |R\rangle \langle R| +|L\rangle e^{i\phi} \langle L|=\left(\begin{aligned}
& 0 \quad \,\, 0 \\
& 0  \quad e^{i\phi}
\end{aligned}
\right)
\end{equation}

\begin{remark} Sometimes also other optical devices realizing unitary operators are referred
to as beam splitters, for instance
the ideal BS is the rotation $\cos \theta \si_1 +\sin \theta \si_3$ with $\cos \theta$ and $\sin \theta$ referred to
as transmittance and reflectivity parameters respectively.
\end{remark}

The standard combination of these units in the  {\it Mach-Zender setting} acts as
\begin{equation}
\label{eqMachZender}
U_{MZ}=U_R(\theta_2) U_{BS} U_R(\phi_1) U_L(\phi_2) U_{mir} U_{BS} U_R(\theta_1),
\end{equation}
which is easily seen to yield the full $4$-parameter representations of the group $U(2)$
(compare with representation \eqref{eqgenunit2dim2} of $SU(2)$).

Another scheme of physically realizable units that reproduces directly the Cartan
decomposition \eqref{eqgenunit2dim4} can be built from the so-called {\it quarter wave plates}
\index{quarter wave plates} and {\it half wave plates}\index{half wave plates} acting as
\begin{equation}
\label{eqquaterhalfplate}
U_{qwp}(\theta)=e^{-i\theta \si_2}e^{-i\pi \si_3/4}e^{-i\theta \si_2},
\quad
 U_{hwp}(\theta)= [U_{qwp}(\theta)]^2 =e^{-i\theta \si_2}e^{-i\pi \si_3/2}e^{-i\theta \si_2}.
\end{equation}
Combining these two schemes one can naturally build the {\it universal unitary gate for
two-qubit states}, that is, the schemes realizing all possible unitary transformations
of two-qubit states (see detail in \cite{EnglertTwoQubit01}).

Yet another optical devise is the so-called filter, which can be oriented in different
ways to make computations in the bases \eqref{eqqubitbas}. It corresponds to the observable
that is diagonal in this basis and gives values $0$ and $1$ for $e_0^{\be \phi}$ and
$e_1^{\be \phi}$ respectively. Physically it detects a photon if finds it in $e_0^{\be \phi}$
and absorbs it if finds it in the state $e_1^{\be \phi}$. This devise is called a {\it filter}
oriented along the vector  $n=(\sin \be \cos \phi, \sin \be \sin \phi,\cos \be) \in \R^3$.

A key property of the entangled states is that the entanglement is
destroyed when a measurement is carried out on one of the two systems only. For instance,
if, in the Schmidt state \eqref{eqSchmidtDecomp}, we measure the observable $A$
of the first system that is diagonal in the basis $\{\xi_j\}$, that is
$A=\sum_j \la_j P_j$, where $P_j$ is the projection
(in the product space) to the subspace generated by $\xi_j \otimes H_2$,
then we can obtained either of the (not entangled) vectors $\xi_j\otimes \eta_j$ with the
equal probability $1/\min(n,m)$.

This leads one to the far reaching consequence
of non locality of quantum interaction. Namely, suppose that two photons are emitted by some device
in the maximally entangled state \eqref{eqqubitbas} and then move in two different directions.
Suppose we measure one of them by a filter oriented along $n=(\sin \be, 0,\cos \be)$.
According to \eqref{eqqubitbas}, the result will be the states
$e_0^{\be 0}\otimes e_0^{\be 0}/ \sqrt 2$ or $e_1^{\be 0}\otimes e_1^{\be 0}/ \sqrt 2$
with probabilities $\cos^2{\be/2}$ and  $\sin^2{\be/2}$. The remarkable thing is that
this measurement on the first particle affects the other particle, as it brings it to
a well defined state (for any $\be$) and, what is more important,
is independent of the distance (non locality!) between the particles at the time of the measurement.
Thus if after the measurement of the first particle we measure the second particle with the same
filter (oriented along the same vector), we obtain the same result as for the first particle
with probability one (correlation $100\%$). This situation is essentially the famous
{\it Einstein-Podolskii-Rosen (EPR) paradox}\index{Einstein-Podolskii-Rosen (EPR) paradox}
of quantum theory (in its simplified version suggested by D. Bohm). Though Einstein
considered such actions on arbitrary distances as something unnatural,
the recent experiments fully confirmed the conclusions of the EPR thought experiment paving
the path to the experimental work on quantum communication, quantum computation and quantum games.

Measurements arising from self-adjoint operators as described above do not exhaust
the effects of possible measuring instruments. Therefore a more general formulation
of the measurement postulate is needed.
Let $\Om$ be a finite or countable set. A collection of positive operators $\{M_{\om}\}$
in a Hilbert space $H$ parametrized by $\om\in \Om$ (or a mapping from $\Om$ to $\LC^+(H)$) is called
a {\it positive operator valued measure (POVM)}\index{positive operator valued measure (POVM)} if
it satisfies the normalization condition
\begin{equation}
\label{eqdefPOVM}
\sum_{\om\in \Om} M_{\om}=\1.
\end{equation}
The space $\Om$ describes the set of possible outcomes of an experiment.
The {\it measurement based on a POVM} performed in a state $\rho$  produces an outcome $\om$
with the probability
\begin{equation}
\label{eqdefPOVM1}
{\tr} (\rho M_{\om}).
\end{equation}
If all $M_{\om}$ are orthogonal projections, the POVM is called the
{\it projection valued measure (PVM)}\index{projection valued measure (PVM)}.
Only the PVMs arise from self-adjoint operators as described above.

\section{Meyer's quantum penny-flip game}
\label{secMeyGame}

 Let us now introduce the first quantum game proposed by D. Meyer in \cite{MeyerD99}.
 It is an example of a quantum {\it sequential games}, where players act in some order
 on one and the same devise, and represents a version of the penny flip-over game.
The classical setup is as follows. The referee places a coin (penny) head up in a
box. Then three moves are performed sequentially by the two players, $P$ (assumed
to play by the rule of classical probability) and $Q$ (which next will be assumed
 to play by the quantum rules). First $Q$ makes a move by either flipping a coin
 (action $F$) or not (action $N$). Then $P$, not seeing the result of the action
  of the $Q$, makes her move by either flipping it (action $F$) or not (action $N$),
  and finally $Q$ (not seeing what $P$ has done) has the right to flip it again
(action $F$) or not (action $N$). Then the referee opens the box. If the coin is
head up, $Q$ wins and $P$ pays $Q$ a penny. Otherwise $P$ wins and $Q$ pays $P$
a penny. This is a zero-sum game with the table  (the numbers in the table are the payoffs of $P$)
\[
\begin{tabular}{cc}
&Q\\P&
\begin{tabular}{|c|c|c|c|c|}
\hline
&NN&NF&FN& FF\\
\hline
N &-1&1&1& -1\\
\hline
F&1&-1&-1&1\\
\hline
\end{tabular}
\\ &
\\ & {\bf Table 1.1}
\end{tabular}
\]

Easy to see that under the usual rules the optimal minimax value of the game is
$0$ and the optimal minimax strategies of the players are to choose their
strategies uniformly (with probability $1/2$ for $P$ and probability $1/4$ for $Q$).

To construct a quantum version of the game, one augments the two-state classical system
to the qubit by associating the basis vectors
$e_0=|0\rangle=|H\rangle$ and  $e_1=|1\rangle=|T\rangle$ of the qubit $\C^2$
with the states $H$ and $P$. Thus
pure quantum states are unit vectors $|\psi\rangle =a|H\rangle +b|T\rangle$,
$|a|^2+|b|^2=1$ (more precisely, the corresponding elements of the projective
 space $\C\P^1$) and the mixed quantum states are given by the density matrices
\eqref{eqBlochspherestate}.

Possible (pure) transformations of quantum states are given by the unitary matrices
\eqref{eqgenunit2dim1}. However, Meyer has chosen to work with unitary matrices
   \begin{equation}
\label{eqMeyerUnitary}
U=U(u,v)= \left(\begin{aligned}
& u  \quad \,\,\,\, \bar v \\
& v  \quad -\bar u
\end{aligned}
\right), \quad |u|^2+|v|^2=1,
\end{equation}
that, unlike $SU(2)$, have the determinant $-1$.

The opening of a state means the act of its measurement. The
result of such action on a state $\rho$ will be $|H\rangle$ or $T\rangle$
with probabilities \eqref{eqmesqubitsi3stok}.

Flip and no-flip actions $F$ and $N$ are thus presented by the Pauli matrix
$F=\si_1$ and the unit matrix $N=I$:

Let us now assume that $P$ can play classical strategies, $F$ and $N$,
and their classical mixtures, i.e. apply $F$ with some probability
$p\in [0,1]$ and $N$ with the probability $1-p$. The key point (or assumption) in the
quantum setting is that classical mixtures randomize the actions on the
density matrices, that is, the mixed $p$-strategy of $P$ acts on a density
matrix $\rho$ by the rule
\[
\rho \mapsto pF\rho F^*+(1-p) N\rho N^*.
\]

The quantum player $Q$ is supposed to play by (pure) {\it quantum strategies},
that is, by applying arbitrary unitary operators $U(u,v)$. Thus, after the
first move of $Q$ the initial state
\[
\rho_0=|0\rangle \langle 0|=|H\rangle \langle H|=\left(\begin{aligned}
& 1  \quad 0 \\
& 0  \quad 0
\end{aligned}
\right)
\]
turns to the state
\[
\rho_1=U(u,v) \rho_0 U^*(u,v) = \left(\begin{aligned}
& u \bar u \quad u\bar v \\
& v \bar u \quad v\bar v
\end{aligned}
\right).
\]

After the move of $P$ the state turns to
\[
\rho_2= pF\rho_1 F^*+(1-p) N\rho_1 N^*
= \left(\begin{aligned}
& pv \bar v+(1-p)u\bar u \quad pv\bar u+(1-p)u\bar v \\
& pu \bar v +(1-p)v\bar u \quad pu\bar u +(1-p)v\bar v
\end{aligned}
\right).
\]

If the game would stop here, the payoff to the player $P$ would be
\begin{equation}
\label{eqMeypay2}
(-1)[ pv \bar v+(1-p)u\bar u] +  pu\bar u +(1-p)v\bar v
=(2p-1)(u\bar u-v\bar v).
\end{equation}

It is seen that the game with such payoff has the value:
\[
\max_p \min_{u,v}[(2p-1)(u\bar u-v\bar v)]=\min_{u,v} \max_p[(2p-1)(u\bar u-v\bar v)]=0,
\]
and the minimax strategies of the players are $p=1/2$, $u^2=v^2=1/2$.

To see what happens if the third move is included, assume that $Q$ plays both times
with the {\it Hadamard matrix} or {\it Hadamard gate} $U=U(1/\sqrt 2, 1/\sqrt 2)$. Then
\[
\rho_1=  \left(\begin{aligned}
& 1/2 \quad 1/2 \\
& 1/2 \quad 1/2
\end{aligned}
\right)
=|\psi\rangle \langle \psi|
\]
with
\[
\psi =(H\rangle +T\rangle )/\sqrt 2.
\]
This state can be thought of imaginatively as describing the coin standing on its side. Then
$\rho_2=\rho_1$ independently of the choice of $p$, and thus
\[
\rho_3=  U(1/\sqrt 2, 1/\sqrt 2)  \left(\begin{aligned}
& 1/2 \quad 1/2 \\
& 1/2 \quad 1/2
\end{aligned}
\right)  U^*(1/\sqrt 2, 1/\sqrt 2)
\]
\[
= |U(1/\sqrt 2, 1/\sqrt 2) \psi \rangle \langle  U(1/\sqrt 2, 1/\sqrt 2)\psi|=
|H\rangle \langle H|= \left(\begin{aligned}
& 1 \quad 0 \\
& 0 \quad 0
\end{aligned}
\right),
\]
so that $Q$ wins with probability $1$ independently of the actions of $P$! The power
of quantum strategies (or quantum communications) is thus explicitly revealed.

In paper  \cite{MeyerD99} one can also find the discussion of what can happen
if both players are allowed to use pure quantum or even mixed quantum strategies.

\section{First sequential games: quantum Prisoner's dilemma}

In this and the next sections we present the two basic approaches proposed
for the quantization of the simultaneous static games, firstly the so-called {\it EWL protocol}
suggested in  \cite{EWL99} on the example of Prisoner's dilemma and secondly the so-called
{\it MW protocol} suggested in \cite{MW00} on the example of the Battle of the Sexes.

The table of the Prisoner's dilemma worked with in \cite{EWL99} was as follows:
\[
\begin{tabular}{cc}
&Bob\\Alice&
\begin{tabular}{|c|c|c|}
\hline
&C&D\\
\hline
C & (3,3) & (0,5)\\
\hline
D & (5,0) & (1,1) \\
\hline
\end{tabular}
\\ &
\\ & {\bf Table 1.2}
\end{tabular}
\]
which is a performance of a more general version
\[
\begin{tabular}{cc}
&Bob\\Alice&
\begin{tabular}{|c|c|c|}
\hline
&C&D\\
\hline
C & (r,r) & (s,t)\\
\hline
D & (t,s) & (p,p) \\
\hline
\end{tabular}
\\ &
\\ & {\bf Table 1.3}
\end{tabular}
\]
with $r$ for reward, $p$ for punishment, $s$ for sucker's payoff, where $t>r>p>s$.

In the quantum version each player can manipulate a qubit (rather than playing with
two bits in the classical version) generated by the basis of two vectors that now are identified
with the actions of cooperate or defect: $e_0=|0\rangle =|C\rangle$, $e_1=|1\rangle =|D\rangle$.
Only the referee has access to the combine system of two qubits and she prepares the initial state
$\psi_{in}=J |CC\rangle$ with some unitary operator $J$ in $\C^2\otimes \C^2$ (made known to both
 players), which is symmetric with respect to the interchange of the players. Physically $J$ is said to act as
 the {\it entanglement} that mixes in some way the initial product form $|CC\rangle=|C\rangle \otimes |C\rangle$.
Then Alice and Bob choose (simultaneously and independently) some unitary operators $U_A$ and $U_B$
in $\C^2$ to act on their qubits, which transform the state $\psi_{in}$ into $(U_A\otimes U_B) \psi_{in}$.
Finally the referee redoes the entanglement by applying $J^*=J^{-1}$ yielding the final state
  \begin{equation}
\label{eqEWLprot}
|\psi_{fin} \rangle =J^* (U_A\otimes U_B) J|CC\rangle ,
\end{equation}
often referred to as {\it EWL protocol}. For
\[
|\psi_{fin}\rangle
= \psi_{CC} |CC\rangle + \psi_{CD} |CD\rangle +\psi_{DC} |DC\rangle +\psi_{DD} |DD\rangle,
\]
the squares  $|\psi_{CC}|^2$, $|\psi_{CD}|^2$, $|\psi_{DC}|^2$, $|\psi_{DD}|^2$ are the
probabilities of the corresponding outcomes, so that the payoffs for Alice and Bob are
  \begin{equation}
\label{eqEWLpayAlBob}
\Pi_A=r|\psi_{CC}|^2 +p |\psi_{DD}|^2+ t |\psi_{DC}|^2+ s|\psi_{CD}|^2,
\Pi_B=r|\psi_{CC}|^2 +p |\psi_{DD}|^2+ s |\psi_{DC}|^2+ t|\psi_{CD}|^2.
\end{equation}

Clearly the game depends on both the choice of the entangling operator $J$ and the set of
allowed unitary operators, that is, the strategy spaces of Alice and Bob.

Concretely, in \cite{EWL99}, the set of unitary operators used by both players was restricted,
rather artificially in fact, to the two-parameter set
\begin{equation}
\label{eqEWLstrat}
U(\theta, \phi) =
  \left(\begin{aligned}
& e^{i\phi} \cos (\theta/2) \quad \quad \sin (\theta /2) \\
& -\sin (\theta/2)  \quad  e^{-i\phi} \cos (\theta/2)
\end{aligned}
\right)
\end{equation}
with $0\le \theta \le \pi$, $0\le \phi \le \pi/2$.
In particular, the operators $\hat C$ and  $\hat D$,
\[
\hat C =U(0,0)=  \left(\begin{aligned}
& 1 \quad 0 \\
& 0 \quad 1
\end{aligned}
\right)=\1, \quad
\hat D =U(\pi,0)=  \left(\begin{aligned}
& \quad 0 \quad 1 \\
& -1 \quad 0
\end{aligned}
\right),
\]
 were associated with the cooperative and defective classical strategies, because as the starting point
 was supposed to be $|CC\rangle$, the identity operator $\hat C$ preserves the cooperative behavior
 $|C\rangle$ and $\hat D$ flips it to the defective behavior $|D\rangle$.

\begin{remark}
In later publications the changing sign feature of $\hat D$ (looking a bit artificially)
was mostly abandoned and one used the exact flipping operator $F=\si_x$ instead.
\end{remark}

The assumptions on $J$ made in \cite{EWL99} were introduced with a very clear interpretation,
as those that would allow to reproduce the classical game. Namely, the commutativity conditions
  \begin{equation}
\label{eqEWLprotcond}
[J, \hat D\otimes \hat D]=0, \quad  [J, \hat D\otimes \hat C]=0, \quad [J, \hat C\otimes \hat D]=0,
\end{equation}
were assumed, implying that
 \begin{equation}
\label{eqEWLprotcond1}
[J, U(\theta,0)]=0
\end{equation}
for all $\theta$. If this holds, then
\[
\psi_{fin}\rangle= U_A \otimes U_B |CC\rangle
=[\cos(\theta_A/2) |C\rangle-\sin(\theta_A/2) |D\rangle]
\otimes
[\cos(\theta_B/2) |C\rangle-\sin(\theta_B/2) |D\rangle],
\]
and all probabilities factorize,
whenever $U_A$ and $U_B$ are restricted to $U(\theta,0)$, in particular, if
$U_A$ and $U_B$ are allowed to be only the 'classical actions', i.e. either $\hat C$
or $\hat D$. Thus, identifying $\cos^2(\theta_A/2)$
and $\cos^2(\theta_B/2)$ with classical probabilities $p$ and $q$ we reproduce the
payoffs of the classical prisoners' dilemma played with the mixed strategies.

\begin{exer}
(i) Check that in the basis $|CC\rangle, |CD\rangle, |DC\rangle, |DD\rangle$
\[
\hat C \times \hat D=-\left(\begin{aligned}
& \hat D \quad 0 \\
& 0 \quad \hat D
\end{aligned}
\right),
\quad
\hat D \times \hat C=\left(\begin{aligned}
& 0 \quad -1 \\
& 1 \quad \quad 0
\end{aligned}
\right),
\quad
\hat D \times \hat D=\left(\begin{aligned}
& \quad 0 \quad \hat D \\
& -\hat D \quad  0
\end{aligned}
\right),
\quad
\si_x \times \si_x=\left(\begin{aligned}
& \,\, 0 \quad \si_x \\
& \si_x  \quad \,\, 0
\end{aligned}
\right).
\]

(ii) For $J=\left(\begin{aligned}
& A \quad B \\
& R \quad S
\end{aligned}
\right)$, condition \eqref{eqEWLprotcond} is equivalent to
\[
 J=\left(\begin{aligned}
& \quad A \quad B \\
& -B \quad A
\end{aligned}
\right), \quad A\hat D=\hat D A, \quad  B\hat D=\hat D B
\]
(and under $[J, C\otimes D]=0$ the first two conditions of \eqref{eqEWLprotcond}
are equivalent). Hence $J$ is a linear combination of the matrices
\[
\1, \hat C \times \hat D, \hat D \times \hat C, \hat D \times \hat D.
\]
\end{exer}

In \cite{EWL99} the operator $J$ was chosen as
\[
J_{\ga}=\exp\{ i\ga \hat D \otimes \hat D/2\}, \quad \ga \in [0, \pi/2],
\]
so that
\[
J_{\ga}=\cos(\ga/2) \hat C\otimes \hat C +i\sin (\ga/2) \hat D\otimes \hat D.
\]
(The choice of coefficients is also restricted by the requirement that $J$ is unitary.)

\begin{remark} This choice of $J$ can be considered as the most general fully
symmetric choice. In future publications, when the flipping $F=\si_x$ became
standard substitute to $\hat D$, the version with
\[
J_{\ga}=\cos(\ga/2) \1\otimes \1 +i\sin (\ga/2) \si_x\otimes \si_x
\]
became a more or less canonical choice.
\end{remark}

We have arrived now at the problem of finding Nash equilibria for a
 purely classical stationary game of two players
with payoffs \eqref{eqEWLpayAlBob}, where $\psi_{fin}$ is calculated from
 \eqref{eqEWLprot} based on the players strategies, which are the pairs
 $\theta_A, \phi_A$ of Alice and $\theta_B, \phi_B$ of Bob, defining
 $U_A$ and $U_B$ via \eqref{eqEWLstrat}. The whole quantum content is encoded
 in the particular way the payoffs are calculated.

If $\ga=0$, $J_0$ is the identity operator and the probabilities factorize for
all $U_A, U_B$. Hence $\hat D\otimes \hat D$ is equilibrium in dominated strategies,
as in classical game. Therefore $\ga $ is considered as the 'entangling parameter',
and the case with $\ga= \pi/2$ as the 'maximally entangled game'. In this case
\[
J_{\pi/2}=\frac{1}{\sqrt 2}[ \hat C\otimes \hat C +i \hat D\otimes \hat D],
\quad J_{\pi/2} |CC\rangle =\frac{1}{\sqrt 2}(|CC\rangle+i|TT\rangle),
\]
and $\hat D\otimes \hat D$ is not a Nash equilibrium.
The calculations show (see \cite{EWL99} and a very detailed presentation in
review \cite{GuoZhang08}) that in the maximally entangled game there is a unique Nash equilibrium
$\hat Q\otimes \hat Q$ with
\[
\hat Q=i\si_3= \left(\begin{aligned}
&  i \quad \quad 0 \\
& 0 \quad -i
\end{aligned}
\right)
\]
with the payoff $(3,3)$, that is, the Pareto optimal (effective) solution became
the unique Nash equilibrium!

As was noted in  \cite{BH01}, this effect was achieved precisely by the artificial
restriction of the strategy space to operators \eqref{eqEWLstrat}.
The same effect can be achieved just with three strategies $I, \si_y=\si_2, \si_z=\si_3$ for each player
with $\si_z$ giving the desired equilibrium. In \cite{GuoZhang08} detailed calculations
are given reproducing the same effect from the three strategies $\1, \si_x, \si_z$.
But if we allow the full discrete set $\1, \si_x, \si_y, \si_z$ the equilibrium disappears.
This is in fact a consequence of a general result, see below Proposition \ref{noNash quantumtwo}.

\section{First sequential games: quantum Battle of the Sexes}

A slightly different approach to the 'quantization of games' was proposed in \cite{MW00}
on the example of the Battle of the Sexes, namely the game given by the table
\[
\begin{tabular}{cc}
&Bob\\Alice&
\begin{tabular}{|c|c|c|}
\hline
&O&T\\
\hline
O & ($\al, \be$) & ($\ga, \ga$)\\
\hline
T & ($\ga, \ga$) & ($\be, \al$) \\
\hline
\end{tabular}
\\ &
\\ & {\bf Table 1.4}
\end{tabular}
\]
where $\al>\be>\ga$. Here $O$ reflects the preferred activity of the wife (opera, ballet, etc)
 and $T$ that of the husband (television, football, etc). Again each player has a quibit $\C^2$ at their
 disposal with the two basic states now denoted $e_0=|O\rangle$, $e_1=|T\rangle$.

Classical theory (see e.g. \cite{KolMal})
yields the conclusion that this game has two pure Nash equilibria  $(O,O)$, $(T,T)$
and one mixed equilibrium ($p$ and $q$ denote the probabilities used by Alice and Bob to play $O$):
\begin{equation}
\label{eqbattleSex}
p^*=\frac{\al-\ga}{\al+\be-2\ga}, \quad q^*=1-p^*=\frac{\be-\ga}{\al+\be-2\ga},
\end{equation}
the payoff for both Alice and Bob in this equilibrium being
\begin{equation}
\label{eqbattleSex1}
(\al+\be)p^*q^*+\ga(p^*p^*+q^*q^*)=\frac{\al \be -\ga^2}{\al+\be-2\ga}<\min(\al, \be).
\end{equation}

As the simplest possible quantum version one can suggest that instead of choosing probabilities
$p,q$ to play the classical strategies, Alice and Bob are allowed to choose quantum superpositions,
that is, the pure quantum states $a |O\rangle +b |T\rangle$ and $c |O\rangle +d |T\rangle$
respectively,  with $|a|^2+|b|^2=1$ and $|c|^2+|d|^2=1$ and the outcome is then measured
according to the measurement rules of quantum mechanics (see the end of Section \ref{secpreliomfinitequ}).
Namely, their common pure state in $\C^2\otimes \C^2$ becomes
\[
\psi_{in}= (a |O\rangle +b |T\rangle)\otimes (c |O\rangle +d |T\rangle)
=ac |OO\rangle +ad |OT\rangle +bc |TO\rangle + bd |TT\rangle,
\]
so that after the measurements one gets $|OO\rangle$ with probability $|ac|^2$,
$|OT\rangle$ with probability $|ad|^2$, $|TO\rangle$ with probability $|bc|^2$
and  $|TT\rangle$ with probability $|bd|^2$. This is exactly the same result,
as if they play classical mixed strategies choosing $O$ with probabilities
$p=|a|^2$ and $q=|c|^2$ respectively, so that this quantum version of the game
reproduces the classical game.

Introducing more advanced quantum operations one can assume that
the players start at some initial pure state $\psi_{in}$ (the analog
of $J|CC\rangle$ of the EWL protocol), or even
mixed state $\rho_{in}$, and then Alice and Bob are allowed to perform
on their parts of the product $\C^2\otimes C^2$ some quantum operations.
In \cite{MW00} it was suggested that the allowed strategies for Alice
and Bob are either the identity operator $I=\1$ or the exchange (flip)
operator $F=\si_1$, or their classical mixtures, that is, choosing $I$
with probabilities $p$ and $q$ respectively (and thus $F$ with probabilities
$1-p$ and $1-q$). The point to stress is that, like in Meyer's penny flipping
 game, these probabilities are applied to the action of $I$ and $F$ on the
 density matrices by dressing \eqref{eqdressingdef}.

As one can expect,  the situation will be quite different
depending on the initial $\psi_{in}$ or $\rho_{in}$.

Suppose first that they start with a factorizable initial density matrix
$\rho_{in}=\rho_A \otimes \rho_B$. Applying their eligible  mixtures given
by the probabilities $p$ and $q$, Alice and Bob transform the initial state
into the final $\rho_A^{fin} \otimes \rho_B^{fin}$ with
\[
\rho_A^{fin}=pI \rho_A I+(1-p)F\rho_A F,
\quad \rho_B^{fin}=qI \rho_B I+(1-p)F\rho_B F.
\]
For a general initial state the final state becomes
\[
\rho_{fin}= pq (I_A \otimes I_B) \rho_{in} (I_A \otimes I_B)
+p(1-q) (I_A \otimes F_B) \rho_{in} (I_A \otimes F_B)
\]
\begin{equation}
\label{eqMWprot}
+(1-p)q (F_A \otimes I_B) \rho_{in} (F_A \otimes I_B)
+(1-p)(1-q) (F_A \otimes F_B) \rho_{in} (F_A \otimes F_B).
\end{equation}

If they start with the initial density matrix $\rho_{in}$ arising from
the pure states $|OO\rangle$, $|OT\rangle$, $|TO\rangle$, $|TT\rangle$,
then we get back the same classical outcome as in the first simplest scenario.
In fact, starting for instance with $|OO\rangle$,
one obtains
\[
\rho_A^{fin}\otimes \rho^{fin}_B=\left[p \left(\begin{aligned}
& 1 \quad 0 \\
& 0 \quad 0
\end{aligned}
\right)+(1-p) \left(\begin{aligned}
& 0 \quad 0 \\
& 0 \quad 1
\end{aligned}
\right)\right]\otimes
\left[q \left(\begin{aligned}
& 1 \quad 0 \\
& 0 \quad 0
\end{aligned}
\right)+(1-q) \left(\begin{aligned}
& 0 \quad 0 \\
& 0 \quad 1
\end{aligned}
\right)\right]
\]
\[
=\left(p|O\rangle \langle O|+(1-p) |T\rangle \langle T|\right)
\otimes \left(q|O\rangle \langle O|+(1-q) |T\rangle \langle T|\right)
\]
\[
=pq |OO\rangle \langle OO|+p(1-q) |OT\rangle \langle OT|
+(1-p)q |TO\rangle \langle TO|+(1-p)(1-q) |TT\rangle \langle TT|.
\]
This density matrix yields outcomes $|OO\rangle$,  $|OT\rangle$,  $|TO\rangle$  $|TT\rangle$
with the probabilities $pq$, $p(1-q)$, $(1-p)q$ and $(1-p)(1-q)$, respectively.

The situation changes if we start  with $\psi_A=  a |O\rangle +b |T\rangle$
and $ \psi_B=c |O\rangle +d |T\rangle$, so that
\[
|\psi_{in}\rangle =\psi_A \otimes \psi_B=
ac |OO\rangle+ad |OT\rangle +bc |TO\rangle +ad |TT\rangle
=c_{OO} |OO\rangle+c_{OT} |OT\rangle +c_{TO} |TO\rangle +c_{TT} |TT\rangle,
\]
or, in terms of the density matrices,
\[
\rho_{in}=\rho_A \otimes \rho_B,
\]
\[
\rho_A =|\psi_A\rangle \langle \psi_A|= \left(\begin{aligned}
& a\bar a \quad a \bar b \\
& b \bar a \quad b \bar b
\end{aligned}
\right),
\quad
\rho_B =|\psi_B\rangle \langle \psi_B|= \left(\begin{aligned}
& c\bar c \quad c \bar d \\
& d \bar c \quad d \bar d
\end{aligned}
\right).
\]

Applying  $I$ with probabilities $p$ and $q$ yields
\[
\rho_A^{fin} =
p \left(\begin{aligned}
& a\bar a \quad a \bar b \\
& b \bar a \quad b \bar b
\end{aligned}
\right)+(1-p) \left(\begin{aligned}
& b \bar b \quad b \bar a \\
& a \bar b \quad a \bar a
\end{aligned}
\right),
\quad
 \rho_B^{fin} =
q \left(\begin{aligned}
& c\bar c \quad c \bar d \\
& d \bar c \quad d \bar d
\end{aligned}
\right)+(1-q) \left(\begin{aligned}
& d \bar d \quad d \bar c \\
& c \bar d \quad c \bar c
\end{aligned}
\right).
\]

According to \eqref{eqcomputbastwoqunit},
the probabilities to get $|OO\rangle$, $|OT\rangle$, etc,  become
 \[
P_{OO}= [pa\bar a+(1-p) b\bar b][qc\bar c +(1-q) d\bar d]
\]
 \begin{equation}
\label{eqMWoutcomepr1}
 =pq |c_{OO}|^2+(1-p)q |c_{TO}|^2+p(1-q) |c_{OT}|^2+(1-p)(1-q) |c_{TT}|^2,
  \end{equation}
 \[
 P_{TT}= [pb\bar b+(1-p) a\bar a][qd\bar d +(1-q) c\bar c]
\]
 \begin{equation}
\label{eqMWoutcomepr2}
=pq |c_{TT}|^2+(1-p)q |c_{OT}|^2+p(1-q) |c_{TO}|^2+(1-p)(1-q) |c_{OO}|^2,
  \end{equation}
 \[
 P_{OT}= [pa\bar a+(1-p) b\bar b][qd\bar d +(1-q) c\bar c]
\]
 \begin{equation}
\label{eqMWoutcomepr3}
 =pq |c_{OT}|^2+(1-p)q |c_{TT}|^2+p(1-q) |c_{OO}|^2+(1-p)(1-q) |c_{TO}|^2,
  \end{equation}
 \[
 P_{TO}= [pb\bar b+(1-p) a\bar a][qc\bar c +(1-q) d\bar d]
 \]
  \begin{equation}
\label{eqMWoutcomepr4}
=pq |c_{TO}|^2+(1-p)q |c_{OO}|^2+p(1-q) |c_{TT}|^2+(1-p)(1-q) |c_{OT}|^2,
 \end{equation}
 which are different from the classical outcomes.

 One can expect to have the same outcomes for the general initial superposed state:
\begin{equation}
\label{eqMWinitial}
|\psi_{in}\rangle= \sum_{i,j=0}^1c_{ij} |ij\rangle
=c_{OO} |OO\rangle+c_{OT} |OT\rangle +c_{TO} |TO\rangle +c_{TT} |TT\rangle.
\end{equation}

Let us check it. The corresponding density matrix is
\[
\rho_{in}=      |\psi_{in}\rangle \langle \psi_{in}|
=\sum_{i,j,k,l,} c_{ij} \bar c_{kl} |ij\rangle \langle kl|
=\sum_{i,j,k,l,} c_{ij} \bar c_{kl}  |i\rangle \langle k| \otimes  |j\rangle \langle l|.
\]

Hence,
\[
I_A \rho_{in} I_A=I_B \rho_{in}I_B=\rho_{in},
\]
\[
F_A |ij\rangle \langle kl|F_A= F_A|i\rangle \langle k|F_A \otimes  |j\rangle \langle l|
=|i'\rangle \langle k'| \otimes  |j\rangle \langle l|=|i'j\rangle \langle k'l|,
\]
\[
F_B |ij\rangle \langle kl|F_B=|ij'\rangle \langle kl'|,
\quad (F_A\otimes F_B) |ij\rangle \langle kl|(F_A\otimes F_B)
=|i'j'\rangle \langle k'l'|,
\]
where prime denotes the complementary index.

Hence, by \eqref{eqMWprot},
\[
\rho_{fin}= \sum_{i,j,k,l,} c_{ij} \bar c_{kl} (pq  |ij\rangle \langle kl|
+p(1-q)  |ij'\rangle \langle kl'|
\]
\begin{equation}
\label{eqMWprotsep}
+(1-p)q  |i'j\rangle \langle k'l| +(1-p)(1-q)  |i'j'\rangle \langle k'l'|).
\end{equation}

Probability to measure $|ij\rangle$ is the diagonal element
\begin{equation}
\label{eqMWprotsep1}
P_{ij}=(\rho_{fin})_{ij,ij}=pq|c_{ij}|^2  +(1-p)q |c_{i'j}|^2 +p(1-q) |c_{ij'}|^2 +(1-p)(1-q)|c_{i'j'}|^2,
\end{equation}
which is a compact form of equations \eqref{eqMWoutcomepr1}-\eqref{eqMWoutcomepr4} above.

If we now consider the general bi-matrix game with the matrix

 \[
\begin{tabular}{cc}
&Bob\\Alice&
\begin{tabular}{|c|c|c|}
\hline
&O&T\\
\hline
O & ($\al_{00}, \be_{00}$) & ($\al_{01}, \be_{01}$)\\
\hline
T & ($\al_{10}, \be_{10}$) & ($\al_{11}, \be_{11}$) \\
\hline
\end{tabular}
\\ &
\\ & {\bf Table 1.5}
\end{tabular}
\]
we get the payoffs for the first player
\[
\Pi^A=\sum_{ij} \al_{ij}P_{ij}=\sum \al_{ij}[pq|c_{ij}|^2
+(1-p)q |c_{ij'}|^2 +p(1-q) |c_{ij'}|^2 +(1-p)(1-q)|c_{i'j'}|^2]
\]
\begin{equation}
\label{eqMWgenpaytr}
=pq \sum \al_{ij}|c_{ij}|^2+(1-p)q \sum \al_{ij}|c_{ij'}|^2
+p(1-q) \sum \al_{ij}|c_{i'j}|^2 +(1-p)(1-q) \sum \al_{ij}|c_{i'j'}|^2,
\end{equation}
and the same (with $\be$ instead of $\al$) for the second player.

Thus, as the result, applying quantum rules in this {\it MW protocol}, means effectively just
applying the parametric family of transformations to the initial payoff matrix:
$\al \mapsto \tilde \al$,  $\be \mapsto \tilde \be$:
\begin{equation}
\label{eqMWgenpaytr1}
\tilde \al=\left(\begin{aligned}
& \sum \al_{ij}|c_{ij}|^2 \quad  \sum \al_{ij}|c_{i'j}|^2 \\
&  \sum \al_{ij}|c_{ij'}|^2 \quad \sum \al_{ij}|c_{i'j'}|^2
\end{aligned}
\right), \quad
\tilde \be=\left(\begin{aligned}
& \sum \be_{ij}|c_{ij}|^2 \quad  \sum \be_{ij}|c_{i'j}|^2 \\
&  \sum \be_{ij}|c_{ij'}|^2 \quad \sum \be_{ij}|c_{i'j'}|^2
\end{aligned}
\right).
\end{equation}

In case of the 'most entangled' initial state $\psi=a|OO\rangle+b|TT\rangle$,
$|a|^2+|b|^2=1$, it simplifies to
 \begin{equation}
\label{eqMWgenpaytr2}
\tilde \al=\left(\begin{aligned}
& \al_{00}|a|^2+\al_{11}|b|^2 \quad  \al_{10}|a|^2+\al_{01}|b|^2 \\
&  \al_{01}|a|^2+\al_{10}|b|^2 \quad \al_{11}|a|^2+\al_{00}|b|^2
\end{aligned}
\right), \quad
\tilde \be=\left(\begin{aligned}
& \be_{00}|a|^2+\be_{11}|b|^2 \quad  \be_{10}|a|^2+\be_{01}|b|^2 \\
&  \be_{01}|a|^2+\be_{10}|b|^2 \quad \be_{11}|a|^2+\be_{00}|b|^2
\end{aligned}
\right).
\end{equation}

In particular, the matrix  of Table 1.4 transforms to the matrix

\[
\begin{tabular}{cc}
&Bob\\Alice&
\begin{tabular}{|c|c|c|}
\hline
&O&T\\
\hline
O & ($\tilde \al, \tilde \be$) & ($\tilde \ga, \tilde \ga$)\\
\hline
T & ($\tilde \ga, \tilde \ga$) & ($\tilde \be, \tilde \al$) \\
\hline
\end{tabular}
\\ &
\\ & {\bf Table 1.5}
\end{tabular}
\]
with
\[
\tilde \al=\al |a|^2 +\be |b|^2, \tilde \be= \al |b|^2 +\be |a|^2,
\quad \tilde \ga=\ga.
\]

Now the value of the mixed Nash equilibrium is still less than the
payoffs at both pure equilibria, $p^*=q^*=0$ and $p^*=q^*=1$. These
pure equilibria give payoffs $(\tilde \al,\tilde \be)$ and
$(\tilde \be, \tilde \al)$ respectively. In the special case of
$a=b=1/\sqrt 2$, the payoffs for Alice and Bob coincide in both pure
equilibria. They equal $(\al+\be)/2$, are efficient (Pareto optimal),
and get better payoff than the third equilibrium payoff $(\al+\be +2\ga)/4$.
Hence it was argued in \cite{MW00} that both equilibria $p^*=q^*=0$
and $p^*=q^*=1$ represent somehow the unique solution and thus solve
the dilemma of the Battle of the Sexes. This is of course arguable.
As was commented in \cite{Ben00}, there remain the possibility of mismatch
 (one chooses $0$ and another $1$) giving  lower payoff, which leaves
essentially the same dilemma as the initial classical one.  Marinatto
and Weber argued back that it was natural  for players to stick to
$p^*=q^*=1$, which means doing nothing, rather than start flipping.

\section{Variations on MW protocol}

Using transformation \eqref{eqMWgenpaytr1} one can automatically transform
any game to a new quantum version obtained by the MW protocol.

This transforms, of course, in a systematic way, all properties of the games:
equilibria, their stability, etc. For instance, stability of the equilibria
of the transformed RD for two-player two-action games was analyzed
in \cite{IqbalToRD}, ESS stability for the transformed Rock-Paper-Scissors
game in \cite{IqbalTo02}, and for 3 player games in \cite{IqbalTo02a}. The
transformations of the simplest cooperative games were analyzed in
\cite{IqbalTo02b}.

Transformation \eqref{eqMWgenpaytr1} extends directly to games with arbitrary number
of players and arbitrary number of strategies (in order to preserve
the dimension under this transform, if we have $n$ strategies for a player,
then one must choose exactly $n$ basic transformations allowing to reshuffle
them (say, $n$ transforms taking the first strategy to any of
the $n$ existing strategies).

One can extend the setting of MW protocol by allowing arbitrary
unitary strategies of the players (rather than just $I$ and $F$) and their
classical mixtures.

If the same extension performed with the EWL protocol,
the only real difference between MW and EWL approaches lies in the
application by EWL protocol the disentangling operator $J^*$ before
the measurement, which is not the case in the MW protocol.

Let us review couple of the extensions
performed along these lines and their conclusions.

In \cite{DuLi01}
the MW protocol is applied to the Battle of the Sexes starting from
the 'maximally entangled' initial state
\[
|\psi_{in}\rangle=J|OO\rangle =(|OO\rangle+|TT\rangle)\sqrt 2,
\quad \rho_{in}=|\psi_{in}\rangle \langle \psi_{in}|.
\]
while the strategy space of the players is taken to consist of arbitrary
 unitary $U_A$ and $U_B$, given by the matrices (see \eqref{eqgenunit2dim4})
   \[
U= \left(\begin{aligned}
& \,\, e^{i(\phi+\psi)} \cos \theta  \quad  \,\, ie^{i\phi-\psi} \sin \theta \\
& ie^{-i(\phi-\psi)} \sin \theta  \quad  e^{-i(\phi+\psi)} \cos \theta
\end{aligned}
\right),
\]
and their arbitrary mixtures. Namely, Alice and Bob are supposed to
choose probability densities $f_A(U)$ and $f_B(U)$ such that
\[
\int_{SU(2)} f_A(U) \, dU=1, \quad \int_{SU(2)} f_B(U) \, dU=1,
\]
where the integration is with respect to the Haar measure on $SU(2)$,
and the final state becomes
\begin{equation}
\label{eqmixedunitarystr}
\rho_{fin} =\int\int f_Af_B (U_A\otimes U_B)\rho_{in} (U_A\otimes U_B)^* dU_A dU_B.
\end{equation}

Diagonal elements of this matrix defines the probabilities of the
outcomes $|OO\rangle$, $|OT\rangle$, $|TO\rangle$, $|TT\rangle$. Hence,
introducing the payoff operators for Alice and Bob by
  \[
  \hat S_A=\al |OO\rangle \langle OO| +\be |TT\rangle \langle TT|
  +\ga (|OT\rangle \langle OT| + |TO\rangle \langle TO|),
  \]
  \[
   \hat S_B=\be |OO\rangle \langle OO| +\al |TT\rangle \langle TT|
  +\ga (|OT\rangle \langle OT| + |TO\rangle \langle TO|),
  \]
  it follows that the expected payoffs are
  \[
  \E S_A(f_A,f_B)={\tr} \, (\rho_{fin} \hat S_A)
  = \int\int f_Af_B S_A(U_A,U_B)\, dU_A dU_B,
  \]
  \[
  \E S_B(f_A,f_B)={\tr} \, (\rho_{fin} \hat S_B)
  = \int\int f_Af_B S_B(U_A,U_B)\, dU_A dU_B,
  \]
  where
  \[
  S_A(U_A,U_B)={\tr} [(U_A\otimes U_B)\rho_{in} (U_A\otimes U_B)^*\hat S_A],
  \]
  \[
  S_B(U_A,U_B)={\tr} [(U_A\otimes U_B)\rho_{in} (U_A\otimes U_B)^*\hat S_B]
  \]
  are their payoffs in the pure unitary strategies.
  Thus all the quantum content is encoded in the structure of these payoffs, and the problem
  to find Nash equilibria is now fully classical. The calculations can be simplified
  by noting that, since $\rho_{in}$ is decomposable density matrix,
  so are also the matrices $(U_A\otimes U_B)\rho_{in} (U_A\otimes U_B)^*$,
  and hence probabilities to get an outcome $|\si \tau\rangle$
  (here $\si$ and $\tau$ are either $O$ or $T$) in the game with pure strategies
  is just
  \[
  P_{\si \tau} =|\langle \si \tau |(U_A \otimes U_B)|\psi_{in}\rangle|^2,
  \]
  so that
  \[
  S_A(U_A,U_B)=\al |\langle OO |(U_A \otimes U_B)|\psi_{in}\rangle|^2
  +\be |\langle TT |(U_A \otimes U_B)|\psi_{in}\rangle|^2
  \]
  \[
  +\ga (|\langle OT |(U_A \otimes U_B)|\psi_{in}\rangle|^2
  + |\langle TO |(U_A \otimes U_B)|\psi_{in}\rangle|^2),
  \]
  and $S_B(U_A,U_B)$ the same with $\al,\be$ interchanged.

Calculations show (see \cite{DuLi01}) that in this game there are infinitely many Nash equilibria,
but they all give the same payoff $S=(\al+\be+2\ga)/4$. Moreover, the problem
of mismatch does not arise, because all Nash equilibria are of the form $(f_A, f_B)$
with $f_A\in F_A, f_B \in F_B$ and some sets $F_A, F_B$, and any combination yields
the same payoff.

In \cite{Frack13} another new version of MW protocol is introduced and analyzed, where
players are allowed additional choice, to accept given initial entangled state or not.
More precisely, both players declare independently whether they like to start with
a suggested (by referee) quantum state, and this quantum state is actually prepared
by the referee if both players declare
their willingness for it, otherwise they start with the classical initial $|OO\rangle$.

In \cite{Mendes05} the MW protocol and the transformation  \eqref{eqMWgenpaytr1} are used
for the quantization of the ultimatum game, where the first player is supposed
to have two strategies: to offer some preassigned unfair division of the total sum of $100$,
say $99+1$ and the fair one: $50+50$. The second player can either accept the offer or reject.
Thus the table is
 \[
\begin{tabular}{cc}
&Bob\\Alice&
\begin{tabular}{|c|c|c|}
\hline
&accept& not \\
\hline
unfair & (99,1) & (0,0)\\
\hline
fair & (50,50) & (0,0) \\
\hline
\end{tabular}
\\ &
\\ & {\bf Table 1.6}
\end{tabular}
\]

Paper \cite{Mendes05} also analyses this game under the set of all unitary strategies, where it just
reproduces for this concrete setting the general remark of \cite{Ben00}
on the absence of Nash equilibria for general MW protocol extended to full unitary strategies.
For mixed unitary strategies (like in \eqref{eqmixedunitarystr}) it is shown the existence of
Nash equilibria (the corresponding general result is given in Theorem \ref{thquantumNash}).

\section{Variations on EWL protocol}
\label{secVaronEWL}

As in the case of MW protocol, various extensions of EWL protocol were analyzed
by using more general strategy spaces and the games with more players
and more initial classical actions. Let us review some of these contributions.

In \cite{DuLi01a} the EWL protocol is applied to the Battle of Sexes. It is shown the existence of
infinitely many Nash equilibria when the strategies of players are restricted to
a two-parameter set of unitary transformations (like in the original EWL protocol). What seems more
important they show that for 'nontrivial' two action two player games, if the players are allowed to
play the full set of $SU(2)$ strategies, the quantum EWL game has no Nash equilibria, when started in
maximally entangled state  (see exact formulation below in Proposition \ref{noNash quantumtwo}).

In \cite{DuLi03} the analysis of equilibria for the general
 prisoner's dilemma with Table 1.3 above was provided. Under
 restricted set of unitary operators (EWL like), the phase
transitions are found:  the desired cooperative equilibria
$Q$ arises when the entanglement parameter $\ga$ crosses certain
critical values expressed in terms of the parameters $r,s,t$.
For the full unitary strategies there is a similar transition
between the situation with infinitely many equilibria and no
equilibria at all.

In \cite{DuLi03a}
the three player quantum Prisoner's dilemma is considered. There
are two natural equivalent ways to represent three player games,
via two tables distinguished by a particular choice of the third player:

\[
\begin{tabular}{cc}
&Bob\\Alice&
\begin{tabular}{|c|c|c|}
\hline
&C&D\\
\hline
C & (3,3,3) & (2,5,2)\\
\hline
D & (5,2,2) & (4,4,0) \\
\hline
\end{tabular}
\\ &
\\ & {\bf Table 1.6}: Colin C
\end{tabular}
\quad
\begin{tabular}{cc}
&Bob\\Alice&
\begin{tabular}{|c|c|c|}
\hline
&C&D\\
\hline
C & (2,2,5) & (0,4,4)\\
\hline
D & (4,0,4) & (1,1,1) \\
\hline
\end{tabular}
\\ &
\\ & {\bf Table 1.7}: Colin D
\end{tabular}
\]

or equivalently by a two-row table showing payoffs of each player $A$, $B$, $C$
obtained from each possible profile:
\[
\begin{tabular}{|c|c|c|c|c|c|c|c|c|}
\hline
profile & (C,C,C) & (C,D,C)& (D,C,C)& (D,D,C)& (C,C,D)& (C,D,D)& (D,C,D)& (D,D,D) \\
\hline
payoff & (3,3,3) & (2,5,2)& (5,2,2)& (4,4,0)& (2,2,5)& (0,4,4)& (4,0,4)& (1,1,1) \\
\hline
\end{tabular}
\]

The story behind the dilemma is the same as for two prisoners. The payoffs are chosen
 to reflect the idea that defection brings advantage to each player that is 'inversely
 proportional' to the number of other defecting players.

Since the game is symmetric, one can represent it also by a reduced table, where
all entries with the equal numbers of $D$ and $C$ are shown only once.
For the general payoffs of a three-player symmetric game the table can be given as

\begin{equation}
\label{eqtab3pris}
\begin{tabular}{|c|c|c|c|c|}
\hline
profile & (C,C,C) & (D,D,D)& (D,C,C)& (D,D,C) \\
\hline
payoff & ($r_{3c}, r_{3c},r_{3c}$) & ($r_{3d}, r_{3d},r_{3d}$)
& ($r_{1d}, r_{2c},r_{2c}$) & ($r_{2d}, r_{2d},r_{1c}$) \\
\hline
\end{tabular}
\end{equation}

The above story of the Prisoner's dilemma corresponds to the ordering
\[
r_{1c} < r_{3d} < r_{2c}< r_{3c} <r_{2d} < r_{1d}.
\]

Like its two-player counterpart, the classical version of this game is
a symmetric game with defecting $D$ being the dominating strategies,
so that the profile $(D,D,D)$ is a Nash equilibrium that is also the
solution in dominating strategies.

Quantum scheme extends the two-player game by choosing
$J=\exp\{i(\ga/2) \si_x \otimes \si_x \times \si_x\}$,
with $0\le \ga \le \pi/2$. The final state is
\[
|\psi_{fin} =J^*(U_A\otimes U_B\otimes U_C)J|OOO\rangle.
\]
The payoff for Alice, say, is
\[
S_A=5 P_{DCC}+4(P_{DDC}+P_{DCD}) +3P_{CCC} +2(P_{CCD}+P_{CDC})+P_{DDD},
\]
where
\[
P_{\si \xi \eta}=|\langle \si \xi \eta |\psi_{fin}\rangle |^2.
\]
The strategic space is chosen to be restricted to the two-parameter set:
\[
U(\theta, \phi)= \left(\begin{aligned}
& \,\, \cos \theta  \quad  \,\, e^{i\phi} \sin \theta \\
& -e^{-i\phi} \sin \theta  \quad  \cos \theta
\end{aligned}
\right), \quad \theta \in [0, \pi/2], \,\, \phi \in  [0, \pi/2].
\]
Here $U(0,0)=\1$ represents the strategy 'cooperate' and $U(\pi/2,\pi/2)=i\si_x$
represents the flipping operator of the 'defecting' strategy.

The calculations show (see \cite{DuLi03a}) that $i\si_y\otimes i\si_y \otimes i\si_y$
is a Nash equilibrium for all $\ga$ (this is a new feature as compared to the two-player
setting) with the payoff
\[
S_A=S_B=S_C=1+2\sin^2\ga.
\]
For $\ga=\pi/2$ this yields the desired cooperative and Pareto optimal payoffs of value $3$.
We see also that the symmetric equilibrium payoff increases monotonically and continuously with
the entanglement parameter $\ga$.

A straightforward extension of EWL scheme to arbitrary number $N$ of players is as follows.
The entangling operator is taken to be the 'maximally entangling' one:
\[
J=\frac{1}{\sqrt 2}(I^{\otimes N}+iF^{\otimes N})
=\frac{1}{\sqrt 2}(I^{\otimes N}+i\si_x^{\otimes N}),
\quad J^*=\frac{1}{\sqrt 2}(I^{\otimes N}-i\si_x^{\otimes N}),
\]
and thus the initial state is
\[
\xi_{in}=J(|0 \rangle^{\otimes N})=J|0\cdots 0\rangle=
\frac{1}{\sqrt 2}(|0 \rangle^{\otimes N}+i|1 \rangle^{\otimes N}).
\]
The final state is
\[
\xi_{fin}=J^* (U_1\otimes \cdots \otimes U_N) \xi_{in}
=J^* (U_1\otimes \cdots \otimes U_N)J|0 \rangle^{\otimes N}.
\]
Abandoning the artificial restrictions to the allowed unitary strategies, the game can
be naturally considered with arbitrary unitary strategies $U\in SU(2)$ of all players,
that is with $U$ given by \eqref{eqgenunit2dim5}:
\[
U=\cos \theta (\cos \phi \1 +i\sin \phi \si_z)
+i\sin \theta (\sin \psi \si_x+\cos\psi \si_y).
\]

Two further extensions of the strategy spaces are natural. One can use classically mixed
quantum strategies (like \eqref{eqmixedunitarystr} for MW protocols), or one can allow
to players to use the full set of $TP-CP$ operations (see Section \ref{secOpenSyst}),
their Kraus representation being given in \eqref{eq1ChoiKrausrep}.

\begin{remark}
These strategies can be realized physically via the interaction with additional quantum
systems, referred in this context to as ancillas (or ancillary qubits), see Theorem \ref{thStinespr2rep},
where $H_C$ is the ancilla. For this reason the authors of \cite{BH01a} point out that
in quantum setting all strategies can be considered as 'deterministic'.
\end{remark}

With the general unitary operators $U$ as above,
\[
U |0\rangle =\cos \theta \cos \phi |0\rangle  +i \cos \theta \sin \phi |0\rangle
+i\sin \theta \sin \psi |1\rangle-\sin \theta \cos \psi |1\rangle
\]
\[
=\cos \theta e^{i\phi}  |0\rangle -\sin \theta e^{-i\psi} |1\rangle,
\]
\[
U |1\rangle =\cos \theta \cos \phi |1\rangle  -i \cos \theta \sin \phi |1\rangle
+i\sin \theta \sin \psi |0\rangle+\sin \theta \cos \psi |0\rangle
\]
\[
=\cos \theta e^{-i\phi}  |1\rangle +\sin \theta e^{i\psi} |0\rangle,
\]
and
\[
(U_1\otimes \cdots \otimes U_N)J|0 \rangle^{\otimes N}
=\frac{1}{\sqrt 2} U_1|0\rangle \otimes \cdots \otimes U_N|0\rangle
+  \frac{i}{\sqrt 2} U_1|1\rangle \otimes \cdots \otimes U_N|1\rangle.
\]

For instance, in the case $N=2$,
\[
U_1\otimes U_2 \xi_{in}
=\frac{1}{\sqrt 2}(\cos \theta_1 e^{i\phi_1}  |0\rangle -\sin \theta_1 e^{-i\psi_1} |1\rangle)
\otimes (\cos \theta_2 e^{i\phi_2}  |0\rangle -\sin \theta_2 e^{-i\psi_2} |1\rangle)
\]
\[
+ \frac{i}{\sqrt 2}(\cos \theta_1 e^{-i\phi_1}  |1\rangle +\sin \theta_1 e^{i\psi_1} |0\rangle)
\otimes (\cos \theta_2 e^{-i\phi_2}  |1\rangle +\sin \theta_2 e^{i\psi_2} |0\rangle).
\]

\[
\xi_{fin}=\frac{1}{2}(\cos \theta_1 e^{i\phi_1}  |0\rangle -\sin \theta_1 e^{-i\psi_1} |1\rangle)
\otimes (\cos \theta_2 e^{i\phi_2}  |0\rangle -\sin \theta_2 e^{-i\psi_2} |1\rangle)
\]
\[
+ \frac{i}{2}(\cos \theta_1 e^{-i\phi_1}  |1\rangle +\sin \theta_1 e^{i\psi_1} |0\rangle)
\otimes (\cos \theta_2 e^{-i\phi_2}  |1\rangle +\sin \theta_2 e^{i\psi_2} |0\rangle).
\]
\[
-\frac{i}{2}(\cos \theta_1 e^{i\phi_1}  |1\rangle -\sin \theta_1 e^{-i\psi_1} |0\rangle)
\otimes (\cos \theta_2 e^{i\phi_2}  |1\rangle -\sin \theta_2 e^{-i\psi_2} |0\rangle)
\]
\[
+ \frac{1}{2}(\cos \theta_1 e^{-i\phi_1}  |0\rangle +\sin \theta_1 e^{i\psi_1} |1\rangle)
\otimes (\cos \theta_2 e^{-i\phi_2}  |0\rangle +\sin \theta_2 e^{i\psi_2} |1\rangle)
\]
\[
=\sum_{k,l=0}^1 \xi_{kl} |kl\rangle,
\]
 \[
\xi_{00}=\frac12 \cos \theta_1 e^{i\phi_1}\cos \theta_2 e^{i\phi_2}
+ \frac12 i \sin \theta_1 e^{i\psi_1}\sin \theta_2 e^{i\psi_2}
\]
\[
- \frac12 i \sin \theta_1 e^{-i\psi_1}\sin \theta_2 e^{-i\psi_2}
+\cos \theta_1 e^{-i\phi_1} \cos \theta_2 e^{-i\phi_2}
\]
 \begin{equation}
\label{eqprobout2play2ac1}
=\cos \theta_1 \cos \theta_2 \cos(\phi_1+\phi_2)
-\sin \theta_1 \sin \theta_2 \sin(\psi_1+\psi_2),
\end{equation}
\[
\xi_{11}=\frac12 \sin \theta_1 e^{-i\psi_1}\sin \theta_2 e^{-i\psi_2}
+\frac12 i\cos \theta_1 e^{-i\phi_1}\cos \theta_2 e^{-i\phi_2}
\]
\[
-\frac12 i\cos \theta_1 e^{i\phi_1} \cos \theta_2 e^{i\phi_2}
+\frac12 \sin \theta_1 e^{i\psi_1}\sin \theta_2 e^{i\psi_2}
\]
 \begin{equation}
\label{eqprobout2play2ac2}
= \sin \theta_1 \sin \theta_2 \cos (\psi_1+\psi_2)
+\cos \theta_1 \cos \theta_2 \sin (\phi_1+\phi_2),
\end{equation}
\[
\xi_{01}=-\frac12\cos \theta_1 e^{i\phi_1}  \sin \theta_2 e^{-i\psi_2}
+\frac{i}{2} \sin \theta_1 e^{i\psi_1} \cos \theta_2 e^{-i\phi_2}
\]
\[
 \frac{i}{2}\sin \theta_1 e^{-i\psi_1} \cos \theta_2 e^{i\phi_2}
 \frac{1}{2} \cos \theta_1 e^{-i\phi_1}\sin \theta_2 e^{i\psi_2}
\]
 \begin{equation}
\label{eqprobout2play2ac3}
=i \cos \theta_1  \sin \theta_2 \sin (\psi_2-\phi_1)
+i \sin \theta_1  \cos \theta_2 \cos (\phi_2-\psi_1),
\end{equation}
 \begin{equation}
\label{eqprobout2play2ac4}
\xi_{10}=i \cos \theta_2  \sin \theta_1 \sin (\psi_1-\phi_2)
+i \sin \theta_2  \cos \theta_1 \cos (\phi_1-\psi_2).
\end{equation}

For the probabilities of the four outcomes we thus have
\[
P_{00}=|\xi_{00}|^2
=\cos^2 \theta_1 \cos^2 \theta_2 \cos^2(\phi_1+\phi_2)
+\sin^2 \theta_1 \sin^2 \theta_2 \sin^2(\psi_1+\psi_2)
\]
\[
-2\sin \theta_1 \sin \theta_2 \cos \theta_1 \cos \theta_2 \cos(\phi_1+\phi_2) \sin(\psi_1+\psi_2),
\]
\[
P_{11} =|\xi_{11}|^2
=\cos^2 \theta_1 \cos^2 \theta_2 \sin^2(\phi_1+\phi_2)
+\sin^2 \theta_1 \sin^2 \theta_2 \cos^2(\psi_1+\psi_2)
\]
\[
+2\sin \theta_1 \sin \theta_2 \cos \theta_1 \cos \theta_2 \cos(\psi_1+\psi_2) \sin(\phi_1+\phi_2),
\]
\[
P_{01}=|\xi_{01}|^2
=\cos^2 \theta_1  \sin^2 \theta_2 \sin^2 (\psi_2-\phi_1)
+ \sin^2 \theta_1  \cos^2 \theta_2 \cos^2 (\phi_2-\psi_1),
\]
\[
+2 \sin \theta_1 \sin \theta_2 \cos \theta_1  \cos \theta_2
\sin (\psi_2-\phi_1) \cos (\phi_2-\psi_1),
\]
\[
P_{10}=|\xi_{10}|^2
=\cos^2 \theta_2  \sin^2 \theta_1 \sin^2 (\psi_1-\phi_2)
+ \sin^2 \theta_2  \cos^2 \theta_1 \cos^2 (\phi_1-\psi_2)
\]
\[
+2 \sin \theta_1 \sin \theta_2 \cos \theta_1  \cos \theta_2
\sin (\psi_1-\phi_2) \cos (\phi_1-\psi_2).
\]

\begin{exer}
Check that these probabilities really sum up to 1.
\end{exer}

\begin{prop}
\label{noNash quantumtwo}
Unless there is an outcome $|\si \tau\rangle$ such that it gives the best payoff to both
Alice and Bob, there is no Nash equilibrium for full pure quantum strategies $SU(2)$ for
a two-player two-action game.
\end{prop}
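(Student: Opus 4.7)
The plan is to show that in the maximally entangled EWL game with strategy sets $SU(2)$ on both sides, either player can, for any fixed strategy of the opponent, unilaterally force any single basis outcome $|\si\tau\ran$ to occur with probability $1$. Once that \emph{steering} property is established, the Nash condition forces each player's equilibrium payoff to equal the maximum of her payoff matrix, and hence concentrates the equilibrium outcome distribution on entries that are simultaneously best for both.

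First I would represent bipartite states $|\Psi\ran=\sum_{ij}\Phi_{ij}|ij\ran$ by their coefficient matrix $\Phi=(\Phi_{ij})$. In this picture $(A\otimes B)|\Psi\ran$ has coefficient matrix $A\Phi B^T$, and normalization reads $\tr(\Phi^*\Phi)=1$. The initial state $J|00\ran=(|00\ran+i|11\ran)/\sqrt 2$ has matrix $\Phi_{in}=\tfrac{1}{\sqrt 2}\text{diag}(1,i)$, and a direct check shows that each of the four states $J|\si\tau\ran$ has matrix $\Phi_{\si\tau}$ satisfying $\Phi_{\si\tau}^*\Phi_{\si\tau}=I/2$; equivalently, all four vectors $J|\si\tau\ran$ are maximally entangled.

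The key reachability claim is that for every $U_B\in SU(2)$ and every $(\si,\tau)$ there exists $U_A\in SU(2)$ and $\ph\in\R$ with $U_A\Phi_{in}U_B^T=e^{i\ph}\Phi_{\si\tau}$. The formal candidate is $U_A=e^{i\ph}\Phi_{\si\tau}\,\overline{U_B}\,\Phi_{in}^{-1}$, and a short computation using $\Phi^*_{in}\Phi_{in}=\Phi_{\si\tau}^*\Phi_{\si\tau}=I/2$ yields $U_A^*U_A=I$, so $U_A\in U(2)$. Since $|\det\Phi_{in}|=|\det\Phi_{\si\tau}|$ and $|\det\overline{U_B}|=1$, we have $|\det U_A|=1$, and the free phase $\ph$ can be chosen so that $\det U_A=1$. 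Applying $J^*$ to the identity gives $J^*(U_A\otimes U_B)J|00\ran=e^{i\ph}|\si\tau\ran$, so the measurement yields $|\si\tau\ran$ with probability $1$.

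To conclude, let $(U_A^*,U_B^*)$ be a Nash equilibrium with outcome probabilities $(P^*_{\si\tau})$ and payoffs $\Pi_A^*=\sum a_{\si\tau}P^*_{\si\tau}$, $\Pi_B^*=\sum b_{\si\tau}P^*_{\si\tau}$. The steering property applied to Alice (with Bob fixed at $U_B^*$) shows her best response yields $\max_{\si\tau}a_{\si\tau}$, so $\Pi_A^*=\max a$, forcing $P^*_{\si\tau}=0$ whenever $a_{\si\tau}$ is not maximal; symmetrically on Bob's side. Since the $P^*_{\si\tau}$ sum to $1$, there is at least one pair $(\si,\tau)$ with $P^*_{\si\tau}>0$, and any such pair is best for both players. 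The main obstacle throughout is the steering step: one must verify unitarity of the constructed $U_A$ (which rests on the maximal-entanglement normalization $\Phi^*\Phi=I/2$) and also that the free global phase can be picked so that $\det U_A=1$ rather than merely $U_A\in U(2)$; once this is handled, the game-theoretic conclusion follows immediately.
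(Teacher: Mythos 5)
Your proposal is correct, and it reaches the same key fact as the paper --- the ``steering'' property that either player can unilaterally force any basis outcome with probability one --- but by a genuinely different route. The paper works with the explicit trigonometric parametrization of $SU(2)$ and the closed-form amplitudes $\xi_{00},\xi_{01},\xi_{10},\xi_{11}$ computed just before the proposition; it then simply exhibits parameter choices (e.g.\ $\phi_1=-\phi_2$, $\psi_1=\pi/2-\psi_2$, $\theta_1=-\theta_2$ to make $|\xi_{00}|=1$) that concentrate all the probability on any desired outcome. You instead pass to the coefficient-matrix picture, observe that $(U_A\otimes U_B)$ acts as $\Phi\mapsto U_A\Phi U_B^T$, and exploit the fact that the initial state $J|00\rangle$ and all four targets $J|\si\tau\rangle$ are maximally entangled (so $\sqrt 2\,\Phi$ is unitary) to solve the matrix equation $U_A\Phi_{in}U_B^T=e^{i\ph}\Phi_{\si\tau}$ explicitly and check that the solution lies in $SU(2)$ after adjusting the free phase. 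Your verification of unitarity is sound (note that $\Phi_{in}^*\Phi_{in}=I/2$ for a $2\times2$ matrix already forces $\Phi_{in}\Phi_{in}^*=I/2$, which is what the computation of $U_A^*U_A$ actually uses), and the determinant/phase adjustment is handled correctly. The paper's computation is self-contained given the preceding amplitude formulas but is wedded to that particular parametrization; your argument isolates the structural reason steering works (maximal entanglement of the shared state) and would extend with no change to any maximally entangled initial state and to $d$-dimensional strategy spaces $SU(d)$. You also spell out the final game-theoretic step (equilibrium payoff must equal $\max a_{\si\tau}$, hence the support of $P^*$ sits on simultaneously maximal entries) slightly more carefully than the paper's one-line conclusion. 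No gaps.
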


\begin{proof}
It is seen from formulas \eqref{eqprobout2play2ac1} - \eqref{eqprobout2play2ac4}
that whatever choice of parameter $\theta_2, \phi_2, \psi_2$ is made by Bob,
Alice can choose her $\theta_1, \phi_1, \psi_1$ in a way that would make any of the
coefficients $\xi_{00}$, $\xi_{01}$, $\xi_{10}$, $\xi_{11}$  equal $1$ in magnitude,
and thus to ensure the corresponding outcome to occur with probability $1$. For instance,
in order to achieve $\xi_{00}=1$, Alice can choose $\phi_1=-\phi_2$, $\psi_1=(\pi/2)-\psi_2$,
which turns $\xi_{00}$ to
\[
\cos \theta_1 \cos \theta_2 -\sin \theta_1 \sin \theta_2 =\cos (\theta_1+\theta_2)
\]
and then $\theta_1=-\theta_2$ converts this to $1$. The same is possible
for Bob under any strategy of Alice. Hence a Nash equilibrium can be only
an outcome that  gives the best payoff to both Alice and Bob.
\end{proof}

Remarkably enough, for $N>2$ the situation changes drastically.
For $N=3$ we have
\[
U_1\otimes U_2\otimes U_3 \xi_{in}
=\frac{1}{\sqrt 2}(\cos \theta_1 e^{i\phi_1}  |0\rangle -\sin \theta_1 e^{-i\psi_1} |1\rangle)
\otimes (\cdots {}_2 \cdots) \otimes (\cdots {}_3 \cdots)
\]
\[
+ \frac{i}{\sqrt 2}(\cos \theta_1 e^{-i\phi_1}  |1\rangle +\sin \theta_1 e^{i\psi_1} |0\rangle)
\otimes (\cdots {}_2 \cdots) \otimes (\cdots {}_3 \cdots).
\]

\[
\xi_{fin}=
\frac{1}{2}(\cos \theta_1 e^{i\phi_1}  |0\rangle -\sin \theta_1 e^{-i\psi_1} |1\rangle)
\otimes (\cdots {}_2 \cdots) \otimes (\cdots {}_3 \cdots)
\]
\[
+ \frac{i}{2}(\cos \theta_1 e^{-i\phi_1}  |1\rangle +\sin \theta_1 e^{i\psi_1} |0\rangle)
\otimes (\cdots {}_2 \cdots) \otimes (\cdots {}_3 \cdots)
\]
\[
-\frac{i}{2}(\cos \theta_1 e^{i\phi_1}  |1\rangle -\sin \theta_1 e^{-i\psi_1} |0\rangle)
\otimes (\cdots {}_2 \cdots) \otimes (\cdots {}_3 \cdots)
\]
\[
+ \frac{1}{2}(\cos \theta_1 e^{-i\phi_1}  |0\rangle +\sin \theta_1 e^{i\psi_1} |1\rangle)
\otimes (\cdots {}_2 \cdots) \otimes (\cdots {}_3 \cdots)
\]
\[
=\sum_{k,l,m=0}^1 \xi_{klm} |klm\rangle,
\]
where the second and the third brackets reproduce the first one with all indices changed
 to $2$ or $3$ respectively.
This is the linear combination of the $8$ basis vectors. But only $4$ need to be calculated,
as the other are obtained by permutations. We have
\[
\xi_{000}=\frac12 \cos \theta_1 e^{i\phi_1}\cos \theta_2 e^{i\phi_2}\cos \theta_3 e^{i\phi_3}
+\frac12 i\sin \theta_1 e^{i\psi_1} \sin \theta_2 e^{i\psi_2} \sin \theta_3 e^{i\psi_3}
\]
\[
 +\frac12 i\sin \theta_1 e^{-i\psi_1}\sin \theta_2 e^{-i\psi_2}\sin \theta_3 e^{-i\psi_3}
 +\frac12 \cos \theta_1 e^{-i\phi_1} \cos \theta_2 e^{-i\phi_2}\cos \theta_3 e^{-i\phi_3}
 \]
 \[
 =  \cos \theta_1 \cos \theta_2 \cos \theta_3 \cos (\phi_1+\phi_2 +\phi_3)
 + i \sin \theta_1 \sin \theta_2 \sin \theta_3 \cos (\psi_1+\psi_2 +\psi_3),
 \]
 \[
 \xi_{111}=-\frac12 \sin \theta_1 e^{-i\psi_1}\sin \theta_2 e^{-i\psi_2}\sin \theta_3 e^{-i\psi_3}
+\frac12 i \cos \theta_1 e^{-i\phi_1} \cos \theta_2 e^{-i\phi_2} \cos \theta_3 e^{-i\phi_3}
\]
\[
-\frac12 i \cos \theta_1 e^{i\phi_1}\cos \theta_2 e^{i\phi_2}\cos \theta_3 e^{i\phi_3}
+\frac12 \sin \theta_1 e^{i\psi_1}\sin \theta_2 e^{i\psi_2}\sin \theta_3 e^{i\psi_3}
\]
\[
=i\sin \theta_1 \sin \theta_2 \sin \theta_3 \sin (\psi_1+\psi_2+\psi_3)
+\cos \theta_1 \cos \theta_2 \cos \theta_3  \sin (\phi_1+\phi_2+\phi_3),
\]
\[
\xi_{001} = -\frac12 \cos \theta_1 e^{i\phi_1}\cos \theta_2  e^{i\phi_2} \sin \theta_3 e^{-i\psi_3}
+ \frac12 i \sin \theta_1 e^{i\psi_1} \sin \theta_2 e^{i\psi_2} \cos \theta_3 e^{-i\phi_3}
\]
\[
-\frac12 i \sin \theta_1 e^{-i\psi_1}\sin \theta_2 e^{-i\psi_2}\cos \theta_3 e^{i\phi_3}
  +\frac12 \cos \theta_1 e^{-i\phi_1} \cos \theta_2 e^{-i\phi_2} \sin \theta_3 e^{i\psi_3}
  \]
  \[
  = i\cos \theta_1 \cos \theta_2  \sin \theta_3 \sin (\psi_3-\phi_1-\phi_2)
+  \sin \theta_1 \sin \theta_2 \cos \theta_3 \sin (\phi_3-\psi_1-\psi_2),
  \]
  \[
  \xi_{011}=\frac12 \cos \theta_1 e^{i\phi_1} \sin \theta_2 e^{-i\psi_2}\sin \theta_3 e^{-i\psi_3}
  +\frac12 i \sin \theta_1 e^{i\psi_1} \cos \theta_2 e^{-i\phi_2} \cos \theta_3 e^{-i\phi_3}
\]
\[
+\frac12 i \sin \theta_1 e^{-i\psi_1} \cos \theta_2 e^{i\phi_2} \cos \theta_3 e^{i\phi_3}
+\frac12 \cos \theta_1 e^{-i\phi_1} \sin \theta_2 e^{i\psi_2} \sin \theta_3 e^{i\psi_3}
\]
\[
= \cos \theta_1  \sin \theta_2 \sin \theta_3 \cos (\psi_2+\psi_3-\phi_1)
+ i \sin \theta_1 \cos \theta_2  \cos \theta_3 \cos(\phi_3+\phi_2-\psi_1).
 \]

In \cite{BH01a} a detailed discussion is devoted to the quantized version of the famous minority game.
In its classical versions the players are supposed to choose $0$  or $1$ and submit to the referee.
Those whose choice turns out to be in minority get one point reward each. If there is an even split,
or all player made the same choice, no payments arise.

 Probability for 1st player to be in minority is
 \[
 |\xi_{011}|^2 +|\xi_{100}|^2
 \]
 \[
 =\cos^2\theta_1\sin^2\theta_2\sin^2\theta_3\cos^2(\phi_1-\psi_2-\psi_3)
 +\sin^2\theta_1 \cos^2\theta_2\cos^2\theta_3\cos^2(\psi_1-\phi_2-\phi_3)
 \]
 \[
 +\cos^2\theta_1\sin^2\theta_2\sin^2\theta_3\sin^2(\phi_1-\psi_2-\psi_3)
 +\sin^2\theta_1\cos^2\theta_2\cos^2\theta_3\sin^2(\psi_1-\phi_2-\phi_3)
 \]
 \[
=\cos^2\theta_1\sin^2\theta_2\sin^2\theta_3+\sin^2\theta_1\cos^2\theta_2\cos^2\theta_3,
 \]
 which is the same as in the classical game for $\cos^2 \theta=p$ denoting the probability of flipping
 (or of choosing $1$). Hence for $N=3$ player the quantum version of the minority game
 does not offer anything new. T

 The situation changes when the number of payers increases. The analysis of these cases exploits a simple
 observation that for minority games the result is not changed whether or not the final
 gate $J^*$ is applied. In fact, $J^*$ transforms any basis vectors $j_1 \cdots j_k\rangle$
 within the sub-space generated by $j_1 \cdots j_k\rangle$ and $j'_1 \cdots j'_k\rangle$
 (prime denotes the complimentary index), but both these vectors yield the same payoff.
 Hence for these particular games EWL and MW schemes are equivalent. It is shown
 in \cite{BH01a} that new (and more profitable than classical) equilibria arise
 for the minority games with $N>3$. An example of such equilibrium for $N=4$ is $(u,u,u,u)$,
 where
 \[
 u= \frac{1}{\sqrt 2} \cos \frac{\pi}{16}(I+i\si_x)+\frac{1}{\sqrt 2} \sin \frac{\pi}{16}(i\si_y-i\si_z).
 \]

 As also shown in \cite{BH01a}, there exist games of $3$ player where new profitable equilibria arise.
 For instance, the game with the table of type  \eqref{eqtab3pris}:
  \[
\begin{tabular}{|c|c|c|c|c|}
\hline
profile & (C,C,C) & (D,D,D)& (D,C,C)& (D,D,C) \\
\hline
payoff & (2,2,2) & (0,0,0) & (1,9,9) & (-9,-9,1) \\
\hline
\end{tabular}
\]
have new profitable equilibria in its quantum version. The equilibria are given
by unitary strategies, but represent equilibria even if considered among all
$TP-CP$ strategies (which is proved using the Kraus representations for such maps).
On the other hand, there are examples, for instance given by the table
\[
\begin{tabular}{|c|c|c|c|c|}
\hline
profile & (C,C,C) & (D,D,D)& (D,C,C)& (D,D,C) \\
\hline
payoff & (-9,-9,-9) & (7,7,7) & (8,-9,-9) & (1,1,-9) \\
\hline
\end{tabular}
\]
where classical rules produce outcomes (Nash equilibria) with better performance
than their quantum counterparts.

\section{Quantization of games with continuous strategy spaces}
\label{secQGcontstate}

The extension of EWL protocol for games with initially continuous strategy space
was first suggested in \cite{LiContvarquantgame02}.

The underlying classical model was that of Cournot's duopoly.
Recall that, for $Q=q_1+q_2 $ denoting the total amount of a product produced by two firms,
one assumes that the price per unit of the product equals $P(Q)=(a-Q)^+=\max(0, a-Q)$.
If $c$ is the cost of the production of a unit of the product, the profits of two firms are
(for $a\ge Q$)
\begin{equation}
\label{eqCournotprepqu}
u_j(q_1,q_2)=q_j [P(Q)-c]=q_j[a-c -(q_1+q_2)]
\end{equation}
Though the unique Nash equilibrium is $q_1^*=q_2^*=(a-c)/3$ with each firm getting
$(a-c)^2/9$, the cooperative behavior would be to choose $q'_1=q'_2=(a-c)/4$ yielding
to each firm the better profit $(a-c)^2/4$.

To quantise this game let us assume that each player is working with the Hilbert space $L^2(\R)$.
The simplest initial functions for both players are the Gaussian packets
\[
\psi_j(x_j) = (\pi h)^{-1/4} \exp\left\{-\frac{x_j^2}{2h}\right\}
\]
(normalized to $\int |\psi_j(x_j)|^2 dx_j=1$, so that
\begin{equation}
\label{eqQuContSpace0}
\psi^{in}(x_1,x_2)=\psi_1(x_1) \psi_2(x_2)=(\pi h)^{-1/2} \exp\{-\frac{x_1^2+x_2^2}{2h}\}.
\end{equation}

The two basic operators in $L^2(\R)$ are the operator $X$ of multiplication
by the variable $x$ and the momentum operator $P=-ihd/dx$.
The unitary shift operators
\[
D(y)f(x)=\exp\{-iyP/h\}f(x)=\exp\{-yd/dx\}f(x)=f(x-y)
\]
are the simplest possible operators allowing the players to manipulate their positions
(the amount of product to produce). Therefore they are natural candidates to be chosen
as possible actions of the players. Thus, copying the finite-dimensional EWL scheme,
we can introduce a quantum  version of Cournot's game by asserting that the final
state of the system should be
\begin{equation}
\label{eqQuContSpace1}
\psi^{fin}_{y_1,y_2}=J^* [D_1(y_1) \otimes D_2(y_2)]J \psi_{in}
\end{equation}
with an appropriately chosen unitary entangling operator $J$ on
$L^2(\R)\otimes L^2(\R)=L^2(\R^2)$. By the canonical interpretation
of the wave mechanics, the probability distribution of finding a system
described by the wave function $\psi(x_1,x_2)$ in a position $(x_1,x_2)$
has the probability density $|\psi(x_1,x_2)|^2$. Hence the average positions
and final payoffs to the players
can be calculated by the formulas
\begin{equation}
\label{eqQuContSpace2}
q_j (y_1,y_2)=\E [x_j] =\int\int x_j |\psi^{fin}_{y_1,y_2}(x_1,x_2)|^2 \, dx_1 dx_2,
\end{equation}
\begin{equation}
\label{eqQuContSpace3}
u_j(y_1,y_2)=\E [x_j(a-c -(x_1+x_2))]
=\int\int [x_j(a-c -(x_1+x_2))] |\psi^{fin}_{y_1,y_2}(x_1,x_2)|^2 \, dx_1 dx_2.
\end{equation}

Looking at the simplest $J$ that may mix up the variables, one can copy the
unitary rotations  of \eqref{eqgenunit2dim3a} and suggest to use $J$ of the type
\begin{equation}
\label{eqQuContSpace4}
Jf(x_1,x_2)=f(U(x_1,x_2)), \quad U(x_1,x_2) =\left(\begin{aligned}
& \quad \cos c \quad \sin c  \\
& -\sin c \quad \cos c
\end{aligned}
\right)\left(\begin{aligned}
& x_1  \\
& x_2
\end{aligned}
\right).
\end{equation}

However, by physical reasons (see some comments below) the authors of
\cite{LoStackelberg03} suggest to use instead the 'Lorenz rotations':
\[
J_{\ga}f(x_1,x_2)=f(U_{\ga}(x_1,x_2)),
\]
\begin{equation}
\label{eqQuContSpace5}
 U_{\ga}(x_1,x_2) =\left(\begin{aligned}
& \cosh \ga \quad \sinh \ga  \\
& \sinh \ga \quad \cosh \ga
\end{aligned}
\right)\left(\begin{aligned}
& x_1  \\
& x_2
\end{aligned}
\right)
=\left(\begin{aligned}
& x_1 \cosh \ga  + x_2 \sinh \ga  \\
& x_1 \sinh \ga x_1 + x_2 \cosh \ga
\end{aligned}
\right),
\end{equation}
with the inverse operator
 \begin{equation}
\label{eqQuContSpace6}
J^*_{\ga}f(x_1,x_2)=J^{-1}_{\ga}f(x_1,x_2)=f(U^{-1}_{\ga}(x_1,x_2)),
\quad U^{-1}_{\ga}(x_1,x_2) =\left(\begin{aligned}
& \quad \cosh \ga \quad -\sinh \ga  \\
& -\sinh \ga \quad \quad \cosh \ga
\end{aligned}
\right)\left(\begin{aligned}
& x_1  \\
& x_2
\end{aligned}
\right).
\end{equation}

With this choice of $J_{\ga}$  and denoting $x=(x_1,x_2)$, $y=(y_1,y_2)$,  we get for
an arbitrary  $\psi(x_1,x_2)$ that
\[
(J \psi)(x_1,x_2)=(J_{\ga} \psi)(x)=\psi(Ux),
\]
\[
[D_1(y_1) \otimes D_2(y_2)]J_{\ga} \psi (x_1,x_2)=\psi(Ux-Uy),
\]
\[
J^*_{\ga}[D_1(y_1) \otimes D_2(y_2)]J_{\ga} \psi (x_1,x_2)=\psi(x-Uy).
\]
Therefore, with $\psi^{in}$ given by \eqref{eqQuContSpace0},
 \[
\psi^{fin}_{y_1,y_2}=J^*_{\ga} [D_1(y_1) \otimes D_2(y_2)]J_{\ga} \psi_{in}
\]
 \begin{equation}
\label{eqQuContSpace7}
=(\pi h)^{-1/2} \exp\{-\frac{1}{2h} [(x_1-y_1 \cosh \ga  - y_2 \sinh \ga )^2
+(x_2-y_1 \sinh \ga + y_2 \cosh \ga )^2]\}.
\end{equation}

The average positions in this state, defined by \eqref{eqQuContSpace2}, equal
\begin{equation}
\label{eqQuContSpace8}
q_1 (y_1,y_2)=y_1 \cosh \ga  + y_2 \sinh \ga , \quad q_1 (y_1,y_2)=y_1 \sinh \ga  + y_2 \cosh \ga .
\end{equation}

Though in principle we are mostly interested in payoffs \eqref{eqQuContSpace3},
the final simplification suggested in  \cite{LiContvarquantgame02} is that before the measurement,
the final state $\psi^{fin}_{y_1,y_2}$ is squeezed in a way that it effectively becomes the
$\de$-function centered at the mean position $(q_1,q_2)$, and therefore, instead of  \eqref{eqQuContSpace3},
the payoffs simplify to \eqref{eqCournotprepqu} with $(q_1,q_2)$ given by \eqref{eqQuContSpace8}:
\begin{equation}
\label{eqQuContSpace9}
\begin{aligned}
& u_1^{\ga}(y_1,y_2)=q_1[a-c -(q_1+q_2)]=(y_1 \cosh \ga  + y_2 \sinh \ga )[a-c-e^{\ga}(y_1+y_2)], \\
& u_2^{\ga}(y_1,y_2)=q_2[a-c -(q_1+q_2)]=(y_1 \sinh \ga  + y_2\cosh \ga )[a-c-e^{\ga}(y_1+y_2)].
\end{aligned}
\end{equation}

Solving for the Nash equilibrium, that is, solving the equations
\begin{equation}
\label{eqQuContSpace10a}
\frac{\pa u_1}{\pa y_1}=0, \quad   \frac{\pa u_2}{\pa y_2}=0,
\end{equation}
yields the equilibrium
\begin{equation}
\label{eqQuContSpace10}
y_1^*=y_2^*=\frac {(a-c) \cosh \ga}{1+2e^{2\ga}}
\end{equation}
with the profit
\begin{equation}
\label{eqQuContSpace11}
u_1^{\ga}(y_1^*,y_2^*) = u_2^{\ga}(y_1^*,y_2^*)=\frac {(a-c)^2 e^{\ga} \cosh \ga}{(3\cosh \ga+ \sinh \ga)^2}.
\end{equation}

As $\ga=0$ we recover the classical game. But as $\ga \to \infty$, we have
\begin{equation}
\label{eqQuContSpace11a}
\lim_{\ga\to \infty} u_j^{\ga}(y_1^*,y_2^*) = (a-c)^2/8,
\end{equation}
which is the  effective outcome. Thus in this limit the dilemma between the Nash equilibrium
and the Pareto optimum disappears.

\begin{exer} Calculate the Nash equilibrium using the full formula \eqref{eqQuContSpace3}
instead of its simplified version \eqref{eqQuContSpace9}.
\end{exer}

\begin{remark} Physical realization of quantum games are usually performed via the methods of quantum optics.
There the main role is played by the creation and annihilation operators $\hat a^{\pm}$ of quantum oscillators,
which are given by the formulas
\[
\hat a^{\pm}=\frac{1}{\sqrt{2\om h}}(\om X \mp iP),
\]
or equivalently
\[
X=\sqrt{\frac{\hbar}{2\om}} (\hat a^- +\hat a^+), \quad P=-i \sqrt{\frac{\hbar \om}{2}} (\hat a^- -\hat a^+).
\]
In quantum optics the operators $X$ and $P$ are referred to as the quadratures  (of a single mode
of the electromagnetic field given by $\hat a^{\pm}$). In paper   \cite{LiContvarquantgame02}
the units with $\om=1$ and $h=1$ are used, in which case it is seen that  the operator $J$
of \eqref{eqQuContSpace5} is given by the formula
\begin{equation}
\label{eqQuContSpace12}
J(\ga) =\exp\{-\ga (\hat a_1^+\hat a_2 ^+-\hat a_1^-\hat a_2 ^-) \}
=\exp\{i\ga (X_1P_2+X_2P_1)\},
\end{equation}
and $J_{\ga}\psi_{in}$ turns out to represent the important two-mode squeezed vacuum state used in
the theory of quantum teleportation.
\end{remark}

In \cite{LoStackelberg03} the above scheme (again with the simplification \eqref{eqQuContSpace9})
was used to analyze the Stackelberg duopoly.
The difference with the above game is that now the moves are sequential. Firstly the first firm
makes the move by choosing $y_1$, and then the second firm makes the move choosing its $y_2$
that maximises its profit given $y_1$. Thus the optimal choice of the second firm
arises from solving the second equation in \eqref{eqQuContSpace10a} yielding
\[
y_2(y_1)=\frac{(a-c) \cosh \ga -y_1 e^{2\ga}}{1+e^{2\ga}}
\]
Then the first firm should find $y_1$ maximising
\[
u_1^{\ga}(y_1,y_2(y_1))=(y_1 \cosh \ga  + y_2(y_1) \sinh \ga )[a-c-e^{\ga}(y_1+y_2(y_1))].
\]
Simple analysis yields the optimal value
\[
 y_1^*=\frac{(a-c)(1+\cosh (2\ga)}{2(\cosh \ga +e^{\ga})},
 \]
 with the corresponding optimal $y_2^*=y_2(y_1^*)$. Of course, the optimal profit
 of the second firm turns out to be lower than the optimal profit of the first firm
 (advantage of the first move). Moreover, the difference between the optimal profit
 of the two firms is a monotonically increasing function with respect to
 the 'entangling parameter' $\ga$.

In paper \cite{ZhouMulytiplayerQuantContStrat05} the above results were
extended to the case of several firms. The arguments and results are mostly
analogous (the calculations being of course heavier). Let us notice only that
the operator $J$ of \eqref{eqQuContSpace5}
or \eqref{eqQuContSpace12} is generalized to the operator
\[
J=\exp\{ -\sum_{i\neq j} \ga_j  (\hat a_1^+\hat a_2 ^+-\hat a_1^-\hat a_2 ^-)\}.
\]

\section{Finite-dimensional quantum mechanics of open systems}
\label{secOpenSyst}

The transformations of open quantum systems may be performed by more
general operators than unitary. Namely, one defines {\it operations}
between the state spaces $\TC_s(H_A)$ and $\TC_s(H_B)$ as positive
linear maps $\TC_s(H_A)\to\TC_s(H_B)$ (that take positive linear
operators to positive linear operators), which are contractions in the trace norm:
\begin{equation}
\label{eqcontrtrace}
0\le {\tr} [T(\rho)] \le {\tr} (\rho)
\end{equation}
 for any $\rho \in \TC^+(H_A)$.
Since $|T(\rho)|=T(|\rho|)$ for a positivity preserving $T$ inequality \eqref{eqcontrtrace} is
equivalent to the inequality
\begin{equation}
\label{eqcontrtrace1}
0\le {\tr} |T(\rho)| \le {\tr} |\rho|
\end{equation}
for any $\rho \in \TC_s(H_A)$.

\begin{remark} Some authors define operations as CP-maps introduced below.
\end{remark}

Applying duality \eqref{dualopernorm},
for any $T\in \LC(\TC_s(H_A),\TC_s(H_B))$ one can define the dual map
$T^*: \LC (\LC_s(H_B),\LC_s(H_A))$ via the equation
\begin{equation}
\label{eqdualoper}
{\tr} [T(\rho) \si]={\tr} [\rho T^*(\si)]
\end{equation}

If $T$ is positive, then $T^*$ is also positive (as follows from \eqref{eqdualoper}).
Contraction property \eqref{eqcontrtrace} is equivalent to $T^*\1 \le \1$, and the preservation of the trace
by $T$ is equivalent to the preservation of unity by $T^*$: $T^*(\1)=\1$.

The following simple result is crucial for the theory of games.

\begin{lemma}
\label{lempositivearecompact}
Positive contractions preserving or not increasing trace (or preserving or not increasing the unity operator)
form a convex compact set in $\LC (\TC_s(H_A),\TC_s(H_B))$.
\end{lemma}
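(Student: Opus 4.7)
The plan is to exploit the fact that $\LC(\TC_s(H_A),\TC_s(H_B))$ is finite-dimensional, so compactness reduces to closedness plus boundedness; together with straightforward convexity, this will give all the claims at once. Convexity is immediate: for positive $T_1, T_2$ and $\al\in[0,1]$, the combination $\al T_1+(1-\al)T_2$ is positive because $\TC^+(H_B)$ is a convex cone, and each of the conditions ${\tr}[T(\rho)]\le {\tr}(\rho)$, ${\tr}[T(\rho)]={\tr}(\rho)$, $T^*(\1)\le \1$, $T^*(\1)=\1$ is linear in $T$ and so survives convex combinations.

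For closedness I would fix any norm on the ambient finite-dimensional space and take a convergent sequence $T_n\to T$. For every fixed $\rho\in\TC^+(H_A)$ one has $T_n(\rho)\to T(\rho)$, and since $\TC^+(H_B)$ is a closed cone (it is cut out by the closed conditions $(v,\rho v)\ge 0$ for $v\in H_B$), the limit $T(\rho)$ is positive. The trace functional is continuous, so the scalar inequality or equality relating ${\tr}[T_n(\rho)]$ and ${\tr}(\rho)$ passes to the limit; the dual formulations with $T^*(\1)$ are handled identically via \eqref{eqdualoper}.

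The main step is boundedness. I would show $\|T(\rho)\|_{tr}\le \|\rho\|_{tr}$ for every $\rho\in\TC_s(H_A)$, which bounds $T$ by $1$ in the operator norm on $\LC(\TC_s(H_A),\TC_s(H_B))$ induced by the trace norms, and therefore confines our set to a bounded (hence, by finite-dimensionality, precompact) subset. For $\rho\ge 0$ this is immediate from \eqref{eqcontrtrace} combined with the identity $\|T(\rho)\|_{tr}={\tr}[T(\rho)]$ valid for positive operators. For a general self-adjoint $\rho$ I would use the Jordan decomposition $\rho=\rho^+-\rho^-$ into orthogonal positive parts with $\|\rho\|_{tr}={\tr}(\rho^+)+{\tr}(\rho^-)$ and combine the triangle inequality with positivity of $T$ on each piece, yielding the inequality \eqref{eqcontrtrace1} already stated in the text.

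The only real subtlety -- and what I would single out as the main obstacle -- is this last extension from the positive cone to all self-adjoint elements, since the hypothesis \eqref{eqcontrtrace} is stated only on $\TC^+(H_A)$. Once the trace-norm bound is in place, closedness plus boundedness in a finite-dimensional space gives compactness, and together with convexity this completes the proof. The case of maps preserving or not increasing $\1$ follows by passing to $T^*$ via \eqref{eqdualoper} and applying the same reasoning with the roles of $\TC_s$ and $\LC_s$ interchanged.
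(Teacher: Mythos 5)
Your proof is correct and follows essentially the same route as the paper's: convexity and closedness are observed to be straightforward, and compactness reduces to boundedness of the set in the finite-dimensional ambient space, with the bound supplied by the trace-norm contraction \eqref{eqcontrtrace1}. Your derivation of \eqref{eqcontrtrace1} via the Jordan decomposition $\rho=\rho^+-\rho^-$ and the triangle inequality is in fact more careful than the paper's, which invokes the identity $|T(\rho)|=T(|\rho|)$ stated just before the lemma --- an identity that fails in general as an operator equation, although the trace inequality it is meant to justify is correct and is exactly what your argument delivers.
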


\begin{proof} It is straightforward to see that any of the 4 sets mentioned
are convex and closed in  $\LC (\TC_s(H_A),\TC_s(H_B))$. The only thing to check
for compactness is thus the boundedness, and it follows from \eqref{eqcontrtrace1}.
\end{proof}

This duality allows for the most straightforward method to introduce
the important notion of the partial trace. Namely,
by duality \eqref{eqdualoper}, if $\rho$ is a state on $\tilde H$,
the positive linear map $\si \mapsto \si\otimes \rho$ from $\TC(H) \to \TC(H\otimes \tilde H)$
has the adjoint positive linear map $E_{\rho}: \LC(H\otimes \tilde H) \to \LC(H) $,
called the {\it partial trace}\index{partial trace}. This mapping  satisfies the equation
\begin{equation}
\label{eqdefparrtrace}
{\tr} [E_{\rho} (A) \si]={\tr} [A(\si \otimes \rho)],
\end{equation}
with $A\in \LC(H\otimes \tilde H), \si \in \TC(H)$. Moreover,
\begin{equation}
\label{eqdefparrtrace1}
E_{\rho}(B\otimes D)=B \, {\tr} (D\rho ),
\end{equation}
because
\[
{\tr} [E_{\rho} (B\otimes D) \si]={\tr} [B\otimes D(\si \otimes \rho)]
={\tr} [B\si] {\tr} [D\rho].
\]

Since any operator in $\LC(H\otimes \tilde H)$ is a linear combinations
of the product operators of type $B\otimes D$, formula \eqref{eqdefparrtrace1}
can be taken as an equivalent definition of the partial trace.

In particular, if $\rho=\1$, formula \eqref{eqdefparrtrace1} reduces to
   \begin{equation}
\label{eqdefparrtrace2}
{\tr}_{\tilde H}(B\otimes D)= E_{\1}(B\otimes D)=B \, {\tr} D,
 \end{equation}
 the new left notation being seemingly the most commonly used one.

For example, if $\psi=(|00\rangle+|11\rangle)/\sqrt 2\in H_1\otimes H_2 =\C^2 \otimes \C^2$,
the corresponding density matrix is
\[
\rho=|\psi\rangle \langle \psi|=\frac12\sum_{j,k=0}^1|j\rangle \langle k|\otimes |j\rangle \langle k|
\]
and its partial trace is
\[
{\tr}_{H_2}\rho={\tr}_{H_1}\rho =\frac12 \sum_{j=0}^1|j\rangle \langle j|
=\left(
\begin{aligned}
& 1/2 \quad \,\, 0 \\
& \,\, 0 \quad 1/2
\end{aligned}
\right).
\]

Important fact is that any state $\rho$ can be written as a partial trace of
a pure state, called a {\it purification} of $\rho$.
In fact, for any state $\rho$ in $H$, in the basis $|\xi_j\rangle$, where $\rho$ is diagonal,
it can be written as $\rho=\sum_j \rho_j |\xi_j \rangle \langle \xi_j|$,
and a possible choice of pure state is $|\psi\rangle \langle \psi |$ with
\[
|\psi\rangle = \sum_j \sqrt {\rho_j} |\xi_j \rangle \otimes |\eta_j \rangle
\]
in $H\otimes H$, where $\eta_j$ is any orthonormal basis (for instance,$\eta_j=\xi_j$,
or $\eta_j =\bar\xi_j$).
More precisely, $|\psi\rangle$ can be chosen to lie in $H\otimes \tilde H$, where
the dimension of $\tilde H$ equals the rank of $\rho$ (the number of non-vanishing $\rho_j$).
Then
\[
|\psi\rangle \langle \psi |
=\sum_{j,k} \sqrt {\rho_j \rho_k} (|\xi_j \rangle \otimes |\eta_j \rangle)(\langle \xi_k| \otimes \langle \eta_k|)
=\sum_{j,k} \sqrt {\rho_j \rho_k} |\xi_j \rangle \langle \xi_k| \otimes  |\eta_j \rangle \langle \eta_k|.
\]
Taking partial trace only terms with $j=k$ survive, because ${\tr}  |\eta_j \rangle \langle \eta_k|=\de_j^k$ yielding
\[
{\tr}_{\tilde H}|\psi\rangle \langle \psi | =\sum_j \rho_j |\xi_j \rangle \langle \xi_j|=\rho.
\]

The possibility of purification gives rise to the important measures of distances between the states.
Namely, one defines the {\it fidelity}\index{fidelity} and the {\it fidelity distance}\index{fidelity distance}
between two states $\rho$ and $\ga$ respectively as
\[
F(\rho, \ga)=\max\{|\langle \xi|\eta \rangle|: {\tr}_{\tilde H} (|\xi\rangle \rangle \xi|)=\rho,
\quad {\tr}_{\tilde H} (|\eta\rangle \rangle \eta|)=\ga \},
\]
\[
d_F(\rho, \ga)=\min\{\| \, |\xi\rangle-|\eta \rangle \|: {\tr}_{\tilde H} (|\xi\rangle \rangle \xi|)=\rho,
\quad {\tr}_{\tilde H} (|\eta\rangle \rangle \eta|)=\ga \}.
\]

Employing the bases $|e_i^A\rangle\langle e_j^A|$ in $\TC_s(H_A)$ and
$|e_i^B\rangle\langle e_j^B|$ in $\TC_s(H_B)$ one can describe  an operator
$T:\TC_s(H_A)\to\TC_s(H_B)$ via its matrix
 \begin{equation}
\label{eqoperamatri}
T_{(j,l),(i,k)}={\tr} [|e_l^B\rangle\langle e_k^B| T(|e_i^A\rangle\langle e_j^A|)]
=\langle e_k^B | T(|e_i^A\rangle\langle e_j^A|) e_l^B \rangle,
\end{equation}
so that
\begin{equation}
\label{eqoperamatri1}
T(|e_i^A\rangle\langle e_j^A|)=\sum_{l,k} {\tr} [|e_l^B\rangle\langle e_k^B| T(|e_i^A\rangle\langle e_j^A|)] |e_l^B\rangle\langle e_k^B|
=\sum_{l,k} T_{(j,l),(i,k)} |e_l^B\rangle\langle e_k^B|.
\end{equation}

This matrix provides another representation for $T$ as an operator in $H_A\otimes H_B$ acting as
\begin{equation}
\label{eqoperamatri2}
T (e_k^A\otimes e_l^B) =\sum_{i,j} T_{(ij),(kl)} e_i^A\otimes e_j^B.
\end{equation}

Of interest are the invertible operations and the operations that preserve pure states.
As an example let us see how they look like for the qubits (for the extension to arbitrary Hilbert spaces (see \cite{Davies76})).

\begin{prop}
\label{propstrucoper}
Let $T:\TC_s(\C^2)\to\TC_s(\C^2)$ be an operation (a positive linear contraction).

(i) If $T$ is invertible and $T^{-1}$ is also an operation, then
\begin{equation}
\label{eq1propstrucoper}
T(A)=UAU^{-1}
\end{equation}
with $U$ a unitary or anti-unitary operator in $\C^2$.

(ii) If $T$ preserves pure states, then either $T$ is given by \eqref{eq1propstrucoper} or
\begin{equation}
\label{eq2propstrucoper}
T(\rho)={\tr}[\rho B]|\psi\rangle\langle \psi|
\end{equation}
with some $B\in \LC(C^2)$ and a unit vector $\psi$.
\end{prop}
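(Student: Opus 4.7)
The plan is to use the Bloch representation from Section \ref{secpreliomfinitequ} to reduce both parts to the classification of affine maps of the unit ball $B^3 \subset \R^3$ into itself. Write $A \in \TC_s(\C^2)$ as $A = \alpha I + \vec{y}\cdot \vec{\si}$; any real linear $T$ is determined by $T(I) = a_{00} I + \sum_j a_{0j} \si_j$ and $T(\si_i) = a_{i0} I + \sum_j M_{ji} \si_j$ for some real scalars $a_{ij}$ and a $3\times 3$ matrix $M$. If $T$ is trace-preserving then $a_{00}=1$ and $a_{i0}=0$, and $T$ acts on the state $\rho_{\vec{x}}$ of \eqref{eqBlochspherestate} by the affine map $\vec{x} \mapsto M\vec{x} + \vec{a}$ with $\vec{a}=(a_{01}, a_{02}, a_{03})$.

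For part (i), the assumption that $T$ and $T^{-1}$ are both positive contractions first forces $T$ to preserve the trace: substituting $\si=T\rho$ into $\tr[T^{-1}\si]\le \tr\si$ and combining with $\tr[T\rho]\le\tr\rho$ gives equality. The induced affine map of $B^3$ is thus a bijection and, being affine, sends $S^2=\partial B^3$ bijectively to itself. Hence $|M\vec{x}+\vec{a}|^2-1$ is a degree-$2$ polynomial in $\vec{x}$ vanishing on $S^2$, and is therefore a scalar multiple of $|\vec{x}|^2-1$; matching homogeneous components yields $M^TM=\lambda I$, $M^T\vec{a}=0$, and $\lambda+|\vec{a}|^2=1$. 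Invertibility of $M$ (from the bijectivity assumption) then forces $\vec{a}=0$ and $\lambda=1$, so $M\in O(3)$. The discussion surrounding \eqref{eqantiunitdress}, together with the preceding $SO(3)$ computation, realizes every $O\in SO(3)$ as dressing by some $U\in SU(2)$ and every $O\in O(3)$ with $\det O=-1$ as dressing by an anti-unitary $\tilde U$; in either case $T(A)=UAU^{-1}$ as in \eqref{eq1propstrucoper}.

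For part (ii), pure-state preservation (noting that pure states span $\TC_s(\C^2)$, so the hypothesis implies trace preservation) again yields $|M\vec{x}+\vec{a}|^2=1$ on $S^2$ and hence $M^TM=\lambda I$, $M^T\vec{a}=0$, $\lambda+|\vec{a}|^2=1$. The equation $M^TM=\lambda I$ now admits only two possibilities. If $\lambda>0$, then $M$ is invertible, so $M^T\vec{a}=0$ forces $\vec{a}=0$ and $\lambda=1$, i.e. $M\in O(3)$; the conclusion of (i) applies and gives \eqref{eq1propstrucoper}. If $\lambda=0$, then $M=0$ and $|\vec{a}|=1$: every state is sent to the single pure state $|\psi\rangle\langle\psi|=\frac12(I+\vec{a}\cdot\vec{\si})$, and by linearity $T(A)=\tr(A)\,|\psi\rangle\langle\psi|$, which is \eqref{eq2propstrucoper} with $B=I$.

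The main obstacle is the more permissive reading of (ii) yielding a general $B\in\LC(\C^2)$ in \eqref{eq2propstrucoper}. Under this reading ``preserves pure states'' allows $T$ to send pure states to sub-normalized rank-one operators in a fixed direction $|\psi\rangle$, so trace-preservation is lost. One argues then that the image of $T$ is one-dimensional and invokes the trace duality \eqref{dualopernorm} to represent the linear functional $\rho\mapsto\tr[T\rho]$ as $\rho\mapsto\tr(\rho B)$ for a unique Hermitian $B$; the positivity and contraction properties translate into $0\le B\le I$, and the elimination of intermediate ranks of $M$ in the Bloch-ball analysis proceeds as above. Apart from this bookkeeping, the argument is identical to the trace-preserving case.
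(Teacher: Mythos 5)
Your proof is correct and follows the same overall strategy as the paper: pass to the Bloch representation \eqref{eqrepqubitstatespace}, show trace preservation first (your argument for (i) is verbatim the paper's), and reduce everything to classifying the affine map $\vec x\mapsto M\vec x+\vec a$ of the unit ball. The one genuine difference is the device used to pin down $M$ and $\vec a$. The paper argues via $|\det M|\le 1$ for both $M$ and $M^{-1}$, a volume/symmetry argument to kill $\vec a$, and, in part (ii), unexplained ``simple topological considerations'' to exclude intermediate ranks. You instead observe that $|M\vec x+\vec a|^2-1$ is a quadratic polynomial vanishing on $S^2$, hence a multiple of $|\vec x|^2-1$, which gives $M^TM=\lambda I$, $M^T\vec a=0$, $\lambda+|\vec a|^2=1$ in one stroke and makes the rank dichotomy in (ii) ($\lambda>0$ versus $\lambda=0$) automatic. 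This is tighter than the paper at exactly the point where the paper is vaguest, and it is worth noting that your remark that pure-state preservation already forces trace preservation (pure states span $\TC_s(\C^2)$) dispenses with the paper's ``additional simplifying assumption'' in (ii) under the strict reading of the hypothesis --- at the price of forcing $B=\1$ in \eqref{eq2propstrucoper}, which is in fact all the paper's own proof produces. Your closing paragraph on the permissive reading (subnormalized images, general $B$) is only a sketch --- ``the elimination of intermediate ranks proceeds as above'' is not actually carried out once the $x_0$-row of $T$ is no longer $(1,0,0,0)$ --- but the paper explicitly declines to treat that case too, so this does not put you behind the proof you are being compared with.
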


\begin{proof}
(i) Any element $A$ of $\TC_s(\C^2)$ can be written as \eqref{eqrepqubitstatespace}
with real $x_j$. The operator $A$ is seen to be positive if and only if
\[
x_0\ge 0 \,\, \text{and} \,\,  x_0^2-x_1^2-x_2^2-x_3^2\ge 0.
\]
Let $T$ be given by the matrix $T_{jk}$, $j,k\in \{0,1,2,3\}$, in the coordinates
$\{x_j\}$. First of all, since both $T$ and $T^{-1}$ do not increase trace, it
follows that they both preserve trace, and hence $T_{00}=1$ and
$T_{01}=T_{02}=T_{03}=0$. Therefore $T$ can be described by the vector
$a=(T_{10}, T_{20}, T_{30})$ and the $3\times 3$-matrix $B=T_{ij}$ with $i,j\ge 1$.

Let us denote by $x$ the vectors in $\R^3$  with coordinates $x_1, x_2, x_3$.
The condition of the preservation of positivity implies that
\[
\|x\|\le 1 \implies \|a+Bx\|\le 1.
\]
Hence $B$ does not increase volume and therefore $|\det B|\le 1$. Since
the same is true for $B^{-1}$ it follows that $\det (B)=\pm 1$. Consequently
$B$ maps the unit ball $\|x\|\le 1$ bijectively and onto the unit ball
centered at $-a$. But by linearity the image of $B$ should be a symmetric
set (with each $y$ it should also contain $-y$), and consequently $a=0$.
Hence $B$ maps $\|x\|\le 1$ bijectively onto itself and hence it is a linear
 isometry and thus an orthogonal matrix. If $B\in SO(3)$, then $T$ is obtained
via the dressing with $u\in SU(2)$. If $B\in O(3)$ with $\det (O)=-1$, then
$T$ is obtained by dressing with an anti-unitary operator (see \eqref{eqantiunitdress}).

(ii) Let us prove it under the additional simplifying assumption that $T$
preserves the trace (general case just a bit more lengthy). Then, as in (i),
 we can conclude that $T_{00}=1$ and $T_{01}=T_{02}=T_{03}=0$, and $T$ can
 be described by the vector $a=(T_{10}, T_{20}, T_{30})$ and the
 $3\times 3$-matrix $B=T_{ij}$ with $i,j\ge 1$. By the requirement of the
 preservation of pure states it then follows that $T$ transform the ball
 $\|x\|\le 1$ into the ball $\|a+Bx\|\le 1$ in such a way that the boundary
 is also transformed into the boundary. It is then seen by simple topological
considerations that this is possible either when $a=0$ and $B$ is invertible
 or when $B=0$. In the first case we are back to (i), and in the second case
\[
T: x_0 I +x_1\si_1 +x_2\si_2 +x_3\si_3 \mapsto x_0 (I +a_1\si_1 +a_2\si_2 +a_3\si_3).
\]
By the requirement of the conservation of purity, $a$ is a vector of unit norm,
and we get \eqref{eq2propstrucoper}.
\end{proof}

\begin{remark} Much more complicated argument allows one to fully characterise all
positive  linear contractions $T:\TC_s(\C^2)\to\TC_s(\C^2)$. It turns out that any such operation
is a finite sum of the operators of two types (referred to as completely positive and completely
 copositive operators): $\rho \mapsto V\rho V^*$ and $\rho \mapsto V\rho^T V^*$,
where $\rho^T=\bar \rho$ is the transpose matrix to $\rho$
(that equals to its complex conjugate by self-adjointness) and $V$ some linear operators in $\C^2$,
see  \cite{WoronPos76}, or other arguments leading to the same conclusion in \cite{Stormer} and \cite{MajevPosit12}.
This result does not extend to higher dimensions.
\end{remark}

The most important class of operations constitute the so called
{\it completely positive (CP)} operations.
To define them let us notice
that for any Hilbert space $H$ the tensor product $H\otimes \C^n=H^n$ can be
represented by $n$-dimensional vectors with elements from $H$:
\[
h=(h_1, \cdots , h_n)=h_1\otimes e_1 +\cdots + h_n \otimes e_n.
\]

Moreover, the state space
\[
\TC(H\otimes \C^n) =\TC(H)\otimes \TC(\C^n)
\]
can be identified with the space of $n\times n$-matrices with elements from $\TC(H)$.
In fact, if $\rho \in \TC(H)$ and $A=(A_{jk})\in \TC(\C^n)$, then
\[
 (\rho \otimes A)(h)=\sum_j (\rho \otimes A)(h_j\otimes e_j)=
 \sum_{j,k} \rho (h_j) A_{kj}e_k,
\]
so that
\[
[(\rho \otimes A)(h)]_k=\sum_j A_{kj} \rho (h_j).
\]

Moreover, for $A=(A_{jk})\in \TC(H\otimes \C^n)$, we can write
\[
A=\sum_{j,k} A_{jk} \otimes E_{jk}
\]
with the matrix $E^{jk}$ with the elements $[E^{jk}]_{lm}=\de_j^l\de^k_m$, and therefore
\begin{equation}
\label{eqparttraceinmatr}
{\tr}_{\C^n} A =  \sum_j A_{jj}.
\end{equation}

Any linear operator $T:\TC(H_A)\to\TC(H_B)$ can be lifted to
$T_n: \TC(H_A \otimes \C^n) \to \TC(H_B\otimes \C^n)$ as
\[
T_n (X \otimes Y)=(T(X) \otimes Y).
\]
This definition means that $T_n$ acts on matrices with elements from $\TC(H_A)$
by transforming each elements by means of $T$.

Physics arguments suggest that realizable transformations of quantum state spaces
 $\TC(H)$ should be positive and remain positive after lifting to $H\otimes \C^n$.
 Thus a linear map $T:\TC(H_A)\to\TC(H_B)$ is called $n$-{\it positive}, if
 $T_n: \TC(H_A^n) \to \TC(H_B^n)$ is positive, that is
 \[
(x,T_n(a)x)=\sum_{i,j} (x_i, T(a_{ij})x_j)\ge 0,
\]
for any $x=(x_1, \cdots , x_n)\in H_B^n$, $a\in \TC(H_A^n)$ positive.
The map $T$ is {\it completely positive (CP)}\index{completely positive maps}
 if $T_n$ are positive for all $n\in N$.

From the definition of duality \eqref{eqdualoper},
it is straightforward to see that $T$ is CP if and only if $T^*$ is CP.

 One is mostly interested in trace preserving CP map, referred to as $TP-CP$
operations. Such operations are also called {\it quantum communication channels}
or {\it quantum-quantum channels} (in quantum communications) or
{\it physically realizable operations} (in quantum computing).

Since any positive operator $a\in \TC(H_A^n)$ can be written as $a=b^*b$
with some $b\in \TC(H_A^n)$ (for instance one can choose $b=b^*=\sqrt{a}$), so that
$a_{ij}=\sum_k b^*_{ki}b_{kj}$ with some $b_{ik} \in \LC(H)$,
and the operator $b^*b$ is positive for any $b\in \TC(H_A^n)$,
it follows that $T_n$ is positive if and only if
\begin{equation}
\label{eqdefcompos}
\sum_{i,j,k} (x_i, T(b^*_{ki} b_{kj})x_j)= (x_i, T[(b^*b)_{ij}]x_j)\ge 0
\end{equation}
for any $x=(x_1, \cdots, x_n)\in H_B^n$.

Choosing matrices $b$ such that $b_{ki}=\de^k_m b_i$ with some fixed
$m$ and some $b_i$ (that is a matrix with only one non-vanishing row)
 it follows from \eqref{eqdefcompos} that
\begin{equation}
\label{eqdefcompos1}
\sum_{i,j} (x_i, T(b^*_i b_j)x_j)\ge 0
\end{equation}
for any $x=(x_1, \cdots, x_n)\in H_B^n$ and $b_i \in \LC(H)$.
In other words, the matrix $T[b^*_ib_j]$ is positive definite for any
$b_1, \cdots, b_n \in \TC(H_A)$. Mappings $T$ for which this holds for
any $n$ are often referred to as the mappings of {\it positive type}
or {\it positive definite}. By linearity, \eqref{eqdefcompos1} also
implies \eqref{eqdefcompos}. Thus we arrive at important conclusion that
$T$ being of positive type is an equivalent property to being completely positive.

\begin{remark} A mapping $E: \Om\times \Om \to \TC(H)$ for any set $\Om$ is called positive definite
or of positive type if the matrix $E(a_i,a_j)$ is positive definite in $H^n$ for any $n$ and
$a=(a_1, \cdots, a_n)\in \Om^n$. Thus positive type used above refers to the mapping
$\TC(H_A)\times \TC(H_A) \to \TC(H_B)$ given by $(a,b) \mapsto a^*b$.
\end{remark}

The following result from \cite{Stinespring} gives the fundamental
{\it Stinespring representation}\index{Stinespring representation} for CP maps.

\begin{theorem}
\label{thStinespringrep}
The linear mapping $T:\TC(H_A) \to \TC(H_B)$ is completely positive, if and only if
\begin{equation}
\label{eq1thStinespringrep}
T(X)=V^*\rho(X) V
\end{equation}
with some $*$-representation $\rho$ of $\TC(H_A)$ in some finite-dimensional Hilbert space $H$
(that is $\rho$ is a linear mapping $\TC(H_A)\to \TC(H)$ such that
$\rho(a^*)=[\rho(a)]^*$, $\rho(\1)=\1$, and $\rho(ab)=\rho(a)\rho(b)$)
and a linear $V:H_B\to H$. If $T(\1)=\1$, then the operator $V$
is a (possibly partial) isometry: $V^*V=\1$.
\end{theorem}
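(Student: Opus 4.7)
For the ``if'' direction I would check directly that any $T$ of the form $V^*\rho(X)V$ is CP. Writing $W=V\otimes \1_n$, the lifted map satisfies $T_n(a)=W^*\,(\rho\otimes \mathrm{id})(a)\,W$. Since $\rho\otimes \mathrm{id}$ is a $*$-homomorphism it sends positive elements to positive elements, and sandwiching by $W^*,W$ preserves positivity because $(y,W^*YWy)=(Wy,YWy)\ge 0$ whenever $Y\ge 0$.

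For the converse, the plan is a Gelfand--Naimark--Segal style quotient construction. I would form the algebraic tensor product $\TC(H_A)\otimes H_B$ and equip it with the sesquilinear form
\[
\lan a\otimes x,\, b\otimes y\ran_T \;:=\; (x,\, T(a^*b)\,y)_{H_B},
\]
extended bi-(anti-)linearly to finite sums. The decisive analytic step is positive semidefiniteness of this form: for $\xi=\sum_i b_i\otimes x_i$ one has $\lan \xi,\xi\ran_T=\sum_{i,j}(x_i, T(b_i^* b_j) x_j)$, which is nonnegative precisely by the complete positivity of $T$ as formulated in \eqref{eqdefcompos1}. I would then quotient by the null subspace $N=\{\xi:\lan \xi,\xi\ran_T=0\}$ (a linear subspace by the usual Cauchy--Schwarz argument for semi-inner-products) to obtain a genuine inner product on the finite-dimensional space $H:=(\TC(H_A)\otimes H_B)/N$.

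On this $H$ I would define $\rho(c)[a\otimes x]:=[ca\otimes x]$ and $Vx:=[\1\otimes x]$, and then verify: multiplicativity $\rho(cd)=\rho(c)\rho(d)$ is immediate; the adjoint property $\rho(c^*)=\rho(c)^*$ comes from $\lan \rho(c)(a\otimes x),b\otimes y\ran_T=(x,T(a^*c^*b)y)=\lan a\otimes x,\rho(c^*)(b\otimes y)\ran_T$; well-definedness $\rho(c)N\subseteq N$ follows from $\|\rho(c)\xi\|_T^2=\lan \xi,\rho(c^*c)\xi\ran_T$ combined with Cauchy--Schwarz; and the compression formula is the one-line computation $(y,V^*\rho(c)Vx)=\lan \1\otimes y, c\otimes x\ran_T=(y,T(c)x)$. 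When $T(\1)=\1$, the same identity with $c=\1$ gives $\|Vx\|^2=(x,T(\1)x)=\|x\|^2$, hence $V^*V=\1$, as claimed.

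The main obstacle will be the positive semidefiniteness of $\lan\cdot,\cdot\ran_T$; this is the single point where complete positivity enters decisively, distinguishing CP maps from merely positive ones, while everything else is algebraic bookkeeping once this input is in place. One mild subtlety I anticipate is being careful to treat $\TC(H_A)$ as a unital $*$-algebra (effectively the full operator algebra $\LC(H_A)$) so that the products $ca$ and adjoints $a^*$ appearing throughout the construction are available.
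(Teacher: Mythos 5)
Your proposal is correct and follows essentially the same route as the paper: the converse direction is the identical GNS/Stinespring quotient construction on $\TC(H_A)\otimes H_B$ with the form $\lan a\otimes x, b\otimes y\ran_T=(x,T(a^*b)y)$, whose positive semidefiniteness is exactly where complete positivity (in the form \eqref{eqdefcompos1}) enters, and the ``if'' direction is the same sandwiching computation, merely phrased via the lift $T_n=W^*(\rho\otimes\mathrm{id})(\cdot)W$ rather than by verifying the positive-type condition directly. Your explicit checks of well-definedness of $\rho(c)$ on the quotient and of the $*$-property are welcome details that the paper leaves implicit.
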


\begin{proof}
If \eqref{eq1thStinespringrep} holds, then
\[
\sum_{i,j} (x_i, T(b^*_i b_j)x_j)
=\sum_{i,j} (Vx_i, \rho(b^*_i)\rho( b_j)Vx_j)
\]
\[
=\sum_{i,j} (\rho(b_i) Vx_i, \rho( b_j)Vx_j)
= \| \sum_i\rho(b_i) Vx_i\|^2 \ge 0.
\]

Conversely, let $T$ be CP. On the tensor product $\TC(H_A) \otimes H_B$ we can define the
Hermitian form (linear with respect to the second variable and conjugate linear with respect
to the first one) as follows:
\[
(\phi, \psi)_T= \sum_{i,j} (x_i, T(b^*_ia_j) y_j)
\]
for $\phi=\sum_i b_i\otimes x_i$, $\psi=\sum_j a_j \otimes y_j$.
By \eqref{eqdefcompos} this form is positive definite, that is
$(\phi,\phi)_T\ge 0$ for any $\phi$. Hence it satisfies the Cauchy-Schwarz inequality
$|(\phi,\psi)_T|^2\le (\phi,\phi)_T(\psi,\psi)_T$. Therefore
the null-space $N$ of this form, $N=\{\phi:(\phi,\phi)_T=0\}$, is a closed subspace
and the quotient space $H=\TC(H_A) \otimes H_B/N$ is a Hilbert space.

The natural representation $\rho :\TC(H_A) \to \TC(H)$ is obtained by projecting
the mapping $\rho': \TC(H_A) \to \TC( \TC(H_A) \otimes H_B)$ defined by
\[
\rho'(X):   \sum_i b_i\otimes x_i \mapsto \sum_i Xb_i\otimes x_i,
\]
to $H$. The mapping $x\to \1 \otimes x$ induces the linear operator $V:H_B\to H$.
Equation
\[
(\rho(X) Vx, Vx)_T=(X\otimes x, \1\otimes x)_T=(T(X)x,x)
\]
implies \eqref{eq1thStinespringrep}.
\end{proof}

Theorem \ref{thStinespringrep} implies the following corollary. The linear mapping
 $T:\TC(H_A)\to\TC(H_B)$ is CP if and only it is $\min(n,m)$-positive, where $n$ and $m$
 are the dimensions of $H_A$ and $H_B$. In fact, in the proof above only $n$-positivity
 was used. If $n>m$, we can turn to the adjoint mapping $T^*$, where $m$-positivity
 would suffice to get the Stinespring representation.

Using the theory of representations, one can make formula \eqref{eq1thStinespringrep}
even more concrete. Namely, it is known (see e.g. Section 22 of \cite{Naimark}) that
any representation $\rho$ of $\TC(H_A)$ in some finite-dimensional Hilbert space $H$
is equivalent (up to a trivial representation) to the direct sum of a finite number
 of identical representations. That is, $H=H_0+\sum_{k=1}^K H_k$ (orthogonal sum of subspaces) and
\[
\rho(X)=\sum_{k=1}^K U_k^* X U_k
\]
with $U_k$ isometric bijections $H_k\to H$. Substituting this formula in
  \eqref{eq1thStinespringrep} one obtains the following fundamental
 {\it Kraus} or  {\it Choi-Kraus representation}\index{Choi-Kraus representation} for CP maps.

 \begin{theorem}
\label{thChoiKrausrep}
The linear mapping $T:\TC(H_A) \to \TC(H_B)$ is completely positive, if and only if
\begin{equation}
\label{eq1ChoiKrausrep}
T(X)=\sum_{k=1}^K V_k X V_k^*
\end{equation}
with $K\le nm$ ($n$ and $m$
 are the dimensions of $H_A$ and $H_B$)
 and some linear operators $V_k$ in $H_A$. If $T$ is trace preserving, then $\sum_k V_k^* V_k=\1$.
\end{theorem}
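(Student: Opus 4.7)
The plan is to chain Theorem~\ref{thStinespringrep} with the structure theory of *-representations of the full matrix algebra $M_n(\C)$, while handling the converse by a direct one-line computation. Given $T(X)=\sum_{k=1}^K V_k X V_k^*$ with $V_k:H_A\to H_B$, complete positivity follows immediately from the positive-type characterisation \eqref{eqdefcompos1}: for any $x_1,\dots,x_N\in H_B$ and $b_1,\dots,b_N\in\LC(H_A)$,
\[
\sum_{i,j}(x_i,T(b_i^*b_j)x_j)=\sum_{k=1}^K\Bigl\|\sum_i b_i V_k^* x_i\Bigr\|^2\ge 0.
\]
Trace preservation similarly reduces to $\tr T(X)=\tr((\sum_k V_k^*V_k)X)$, which equals $\tr X$ for every $X\in\TC(H_A)$ iff $\sum_k V_k^*V_k=\1$.

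For the non-trivial direction, I apply Theorem~\ref{thStinespringrep} to write $T(X)=V^*\rho(X)V$ with $\rho$ a *-representation of $\LC(H_A)\cong M_n(\C)$ on some finite-dimensional Hilbert space $H$ and $V:H_B\to H$ linear. The key algebraic input is the classical structure theorem: since $M_n(\C)$ is simple and has, up to unitary equivalence, a unique irreducible *-representation (the defining one on $\C^n$), every finite-dimensional *-representation decomposes, modulo a trivial summand, into copies of the defining representation. This yields an orthogonal decomposition $H=H_0\oplus\bigoplus_{k=1}^K H_k$ together with isometric bijections $W_k:H_k\to H_A$ such that $\rho(X)|_{H_k}=W_k^*XW_k$ and $\rho(X)|_{H_0}=0$. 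Extending each $W_k$ by zero off $H_k$ and substituting in the Stinespring formula,
\[
T(X)=V^*\rho(X)V=\sum_{k=1}^K (W_kV)^*X(W_kV)=\sum_{k=1}^K V_kXV_k^*,\qquad V_k:=(W_kV)^*\in\LC(H_A,H_B).
\]

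To pin down the bound $K\le nm$, I use the explicit construction in the proof of Theorem~\ref{thStinespringrep}: the space $H$ is built as a quotient of $\TC(H_A)\otimes H_B$, so $\dim H\le n^2 m$; since each non-trivial summand $H_k$ has dimension $n$, the multiplicity count $Kn\le\dim H\le n^2 m$ delivers $K\le nm$. The main obstacle in this approach is precisely the invocation of the structure theorem for *-representations of $M_n(\C)$ cited in the paragraph preceding the theorem; once one accepts that every such representation is a direct sum of copies of the defining one, the Kraus operators assemble themselves by pulling the partial isometries $W_k$ through the Stinespring map $V$, and the remaining steps are formal bookkeeping.
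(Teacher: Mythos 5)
Your proposal is correct and follows essentially the same route as the paper: the paper likewise obtains the Kraus form by substituting the decomposition of the finite-dimensional $*$-representation of $\LC(H_A)$ (a direct sum of copies of the defining representation, modulo a trivial summand) into the Stinespring formula of Theorem~\ref{thStinespringrep}. You merely spell out details the paper leaves implicit — the direct positive-type verification of the converse, the trace-preservation identity, and the multiplicity count giving $K\le nm$ — all of which are sound.
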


Notice that the last statement is obtained by using the fact that trace preservation of $T$ means that
$T^*$ preserves the identity operator.

As the composition of the operators of type \eqref{eq1ChoiKrausrep} is clearly of the same type,
it follows that the composition of any two CP maps is again CP.

Yet another representation of CP maps in terms of
partial traces (also referred to sometimes as the
{\it Stinespring representation}\index{Stinespring representation})
is of great importance for physical interpretation and realization of
 these maps (see e.g. \cite{Hayashi} or \cite{Kraus}). Let us present it for coinciding $H_A$ and $H_B$
  (see more general versions in \cite{Hayashi} or \cite{Kraus}).
 \begin{theorem}
\label{thStinespr2rep}
 For any TP-CP map  $T:\TC(H) \to \TC(H)$ there exists a Hilbert space $\tilde H$,
 and (i) a partial isometry $F: H \to H\otimes \tilde H$ such that
\begin{equation}
\label{eq1thStinespr2rep}
T(\rho)={\tr}_{\tilde H} [F \rho F^*],
\end{equation}
and (ii) a unitary map $U: H \otimes \tilde H \to H\otimes \tilde H$ and a state $\om \in \TC(\tilde H)$ such that
\begin{equation}
\label{eq2thStinespr2rep}
T(\rho)={\tr}_{\tilde H} [U (\rho \otimes \om)  U^*].
\end{equation}
\end{theorem}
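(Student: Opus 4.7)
The plan is to reduce everything to the Kraus representation of Theorem \ref{thChoiKrausrep}, which gives the TP-CP map $T$ in the form
\[
T(\rho)=\sum_{k=1}^{K}V_k\rho V_k^*,\qquad \sum_{k=1}^{K}V_k^*V_k=\1,
\]
with at most $K\le n^2$ Kraus operators $V_k\in\LC(H)$. The ancilla Hilbert space that realises both representations is $\tilde H$ with orthonormal basis $\{|1\rangle,\dots,|K\rangle\}$ (so $\dim\tilde H = K$; pad with extra basis vectors if one wishes to fix dimensions a priori).

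For part (i) I would define the map $F:H\to H\otimes\tilde H$ by
\[
F|\psi\rangle=\sum_{k=1}^{K}V_k|\psi\rangle\otimes|k\rangle.
\]
The identity $\sum_k V_k^*V_k=\1$ gives $F^*F=\1$, so $F$ is an isometry (in particular a partial isometry from $H$ into $H\otimes\tilde H$). A direct calculation yields
\[
F\rho F^*=\sum_{k,l=1}^{K}(V_k\rho V_l^*)\otimes|k\rangle\langle l|,
\]
so the partial trace formula \eqref{eqdefparrtrace2} produces ${\tr}_{\tilde H}[F\rho F^*]=\sum_k V_k\rho V_k^* = T(\rho)$, which is \eqref{eq1thStinespr2rep}.

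For part (ii) I would pick a fixed unit vector $|\phi\rangle\in\tilde H$ (e.g.\ $|\phi\rangle=|1\rangle$) and set $\omega=|\phi\rangle\langle\phi|$. The requirement $U(\rho\otimes\omega)U^*=F\rho F^*$ forces $U$ to agree with $F$ on the $n$-dimensional subspace $H\otimes\C|\phi\rangle\subset H\otimes\tilde H$. Declare
\[
U(|\psi\rangle\otimes|\phi\rangle)=F|\psi\rangle,\qquad \psi\in H,
\]
which is an isometry from this $n$-dimensional subspace into the full $nK$-dimensional space $H\otimes\tilde H$. The remaining step is the standard extension lemma: an isometry defined on a subspace of a finite-dimensional Hilbert space extends to a unitary on the whole space (choose any isometric identification of the orthogonal complement of the source with the orthogonal complement of the image; both have the same dimension $n(K-1)$). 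Once $U$ is extended in this way, $U(\rho\otimes\omega)U^*=F\rho F^*$ and the partial trace gives $T(\rho)$, yielding \eqref{eq2thStinespr2rep}.

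There is essentially no hard step: the work is entirely linear-algebraic once the Kraus decomposition is in hand. The one point requiring a line of justification is the unitary extension in (ii), but this is elementary and relies only on the fact that the source subspace $H\otimes\C|\phi\rangle$ and its image $F(H)$ have equal dimension $n$, so their orthogonal complements in $H\otimes\tilde H$ also have equal dimension and can be matched by any unitary. No use of the representation-theoretic content behind Theorem \ref{thChoiKrausrep} is needed beyond its statement.
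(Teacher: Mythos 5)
Your proposal is correct and follows essentially the same route as the paper: both parts start from the Kraus representation, use the same isometry $F=\sum_k V_k\otimes|k\rangle$ with $F^*F=\sum_k V_k^*V_k=\1$, and take $\omega=|1\rangle\langle 1|$ in (ii). Your unitary-extension argument (defining $U$ on $H\otimes\C|1\rangle$ to agree with $F$ and extending by matching orthogonal complements) is just a coordinate-free restatement of the paper's step of setting the first block-column $U_{k1}=V_k$ and completing to a unitary.
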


\begin{proof}
(i) Let $\tilde H=\C^K$ with $K$ the number of terms in \eqref{eq1ChoiKrausrep},
and let $F:H\to H\otimes \tilde H$ is defined by the formula
\[
F=\sum_k V_k \otimes |k\rangle : |\xi\rangle \mapsto \sum_k V_k |\xi \rangle \otimes |k\rangle,
\]
where $|k\rangle$ are the basis vectors in $\tilde H$. It follows that
\[
F^*(x\otimes y)=\sum_k V_k^* x \langle k| y \rangle,
\]
which can be denoted $\sum_k V_k^* \otimes \langle k|$ by identifying
$H$ with $H\otimes \C$. Hence
\[
F^*F   =\sum_{k,j} ( V_k^* \otimes \langle k|)( V_j \otimes |j\rangle)
=\sum_{k,j} V_k^* V_j \langle k|j\rangle =\sum_k V_k^* V_k =\1,
\]
that is, $F$ is a partial isometry.
Finally, by \eqref{eqdefparrtrace2},
\[
\sum_k V_k \rho V_k^* =\sum_{k,j} {\tr}_{\tilde H}(V_k \rho V_j^* \otimes |k\rangle \langle j|)
={\tr}_{\tilde H}(F \rho F^*).
\]

(ii) Let $\om=|1\rangle \langle 1|$. Then $(\rho \otimes \om)_{jk}=\rho \de^j_k$ for any $\rho \in \TC(H)$.
Hence
\[
[U(\rho \otimes \om) U^*]_{jk}=U_{k1} \rho U^*_{j1}
\]
for any operator $U=(U_{jk}) \in \TC(H \otimes \tilde H)$. Therefore, by \eqref{eqparttraceinmatr},
in order to get \eqref{eq2thStinespr2rep} we need
\[
\sum_k U_{k1}\rho U^*_{k1}=\sum_k V_k X V_k^*.
\]
Thus it is sufficient to have $U_{k1}=V_k$ for all $k$. Clearly a unitary operator $U$ exists
satisfying this condition, because $\sum_k V_k^* V_k=\1$.
\end{proof}

The physical meaning of \eqref{eq1thStinespr2rep} is as follows. It means that the $TP-CP$
transformations are exactly the transformations obtained from pure, that is unitary, transformations
performed on a given system combined with another ancillary system, referred to as a reservoir,
or environment, or just {\it ancilla}\index{ancilla}, and projected on the states of a given system.

 Yet another characterization of CP-TP map can be given in terms of its
matrix  \eqref{eqoperamatri}.

 \begin{theorem}
\label{thposdefmatforCP}
$T$ is completely positive iff its matrix
\eqref{eqoperamatri} is positive, as the matrix of an operator in  $H_A\otimes H_B$, that is,
\begin{equation}
\label{eqposdefmatforCP}
\sum_{j,l,i,k} \bar x_{jl}T_{(j,l),(i,k)} x_{ik} \ge 0
\end{equation}
for any vector $(x_{jl})\in H_A\otimes H_B$.
\end{theorem}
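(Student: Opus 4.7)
The plan is to recognize this as (essentially) Choi's theorem and derive it directly from the Choi--Kraus representation (Theorem \ref{thChoiKrausrep}) that has already been proved above.

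First I would unpack \eqref{eqposdefmatforCP}. By formula \eqref{eqoperamatri2}, the four-index array $T_{(ij),(kl)}$ is precisely the matrix, in the basis $\{e_i^A\otimes e_j^B\}$, of a linear operator $\hat T$ on $H_A\otimes H_B$; namely $T_{(ij),(kl)}=\langle e_i^A\otimes e_j^B|\hat T(e_k^A\otimes e_l^B)\rangle$. For a vector $x=\sum_{ij}x_{ij}\,e_i^A\otimes e_j^B$ one has $\langle x|\hat T|x\rangle=\sum_{(ij),(kl)}\bar x_{ij}T_{(ij),(kl)}x_{kl}$, so \eqref{eqposdefmatforCP} is exactly the condition that $\hat T$ is positive semidefinite on $H_A\otimes H_B$. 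Relabelling indices in the original definition \eqref{eqoperamatri}, I would also record the equivalent form $T_{(ij),(kl)}=\langle e_l^B|T(|e_k^A\rangle\langle e_i^A|)|e_j^B\rangle$.

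For the direction ``$T$ is CP $\Rightarrow \hat T\ge 0$'', I would invoke Theorem \ref{thChoiKrausrep} to write $T(X)=\sum_m V_m X V_m^*$ with $V_m:H_A\to H_B$. Setting $(V_m)_{ji}=\langle e_j^B|V_m e_i^A\rangle$ and substituting gives $T_{(ij),(kl)}=\sum_m (V_m)_{lk}\overline{(V_m)_{ji}}$. Plugging this into the left-hand side of \eqref{eqposdefmatforCP} separates it into a product of conjugate sums, collapsing to $\sum_m |A_m|^2\ge 0$ with $A_m=\sum_{ij}(V_m)_{ji}x_{ij}$.

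For the converse ``$\hat T\ge 0 \Rightarrow T$ is CP'', I would apply the spectral theorem on $H_A\otimes H_B$ to write $\hat T=\sum_m|\Psi_m\rangle\langle\Psi_m|$, absorbing the nonnegative eigenvalues into the $\Psi_m$. Expanding $(\Psi_m)_{ij}=\langle e_i^A\otimes e_j^B|\Psi_m\rangle$ yields $T_{(ij),(kl)}=\sum_m(\Psi_m)_{ij}\overline{(\Psi_m)_{kl}}$. Comparing with the Kraus-form expression in the forward direction, the natural ansatz is $V_m e_i^A:=\sum_j\overline{(\Psi_m)_{ij}}\,e_j^B$, i.e.\ $(V_m)_{ji}=\overline{(\Psi_m)_{ij}}$; a one-line matrix-element check then confirms $T(|e_k^A\rangle\langle e_i^A|)=\sum_m V_m|e_k^A\rangle\langle e_i^A|V_m^*$, and by linearity $T(X)=\sum_m V_m X V_m^*$ on all of $\TC(H_A)$. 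This Kraus form is manifestly CP, as implicitly observed in the proof of Theorem \ref{thChoiKrausrep}: for any positive $a\in\TC(H_A^n)$ and $x=(x_1,\dots,x_n)\in H_B^n$, $\sum_{ij}(x_i,T(a_{ij})x_j)=\sum_m\sum_{ij}(V_m^*x_i,a_{ij}V_m^*x_j)\ge 0$.

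The only real obstacle is index bookkeeping: correctly tracking which indices label $H_A$ versus $H_B$, where complex conjugates sit, and verifying that the ansatz $(V_m)_{ji}=\overline{(\Psi_m)_{ij}}$ really turns the spectral factorisation of $\hat T$ into the Kraus factorisation of $T$. Once that notation is in place, each direction is a single short calculation on top of Theorem \ref{thChoiKrausrep}.
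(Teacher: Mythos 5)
Your proposal is correct and follows essentially the same route as the paper: the forward direction reduces via the Kraus representation of Theorem \ref{thChoiKrausrep} to the rank-one computation $T_{(j,l),(i,k)}=y_{ik}\bar y_{jl}$, and the converse decomposes the positive matrix into rank-one pieces $|Y\rangle\langle Y|$ (you via the spectral theorem, the paper via extreme points of the positive cone) and reads off a single Kraus operator from each. The only cosmetic differences are that you carry all Kraus terms explicitly instead of invoking convexity to reduce to one, and you make the eigenvector-to-Kraus-operator dictionary $(V_m)_{ji}=\overline{(\Psi_m)_{ij}}$ explicit where the paper simply points back to its formula \eqref{eq1posdefmatforCP}.
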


\begin{proof}
(i)  Let $T$ is completely positive.
By Theorem \ref{thChoiKrausrep} and convexity, it is sufficient to show that for $T(\rho)=V\rho V^*$, the matrix
\begin{equation}
\label{eq1posdefmatforCP}
T_{(j,l),(i,k)}=\langle e_k^B |V (|e_i^A\rangle\langle e_j^A|)V^* e_l^B \rangle
=\langle e_k^B |V e_i^A\rangle\langle V e_j^A|) e_l^B \rangle=y_{ik}\bar y_{jl}
\end{equation}
is positive, where $y_{ik}= \langle e_k^B| Ve_i^A\rangle$. But
\[
\sum_{j,l,i,k} \bar x_{jl}T_{(j,l),(i,k)} x_{ik}
=\sum_{ik} x_{ik} y_{ik}  \sum_{jl} \bar x_{jl} \bar y_{jl}
=\left| \sum_{ik} x_{ik} y_{ik} \right|^2 \ge 0.
\]

(ii) Again by convexity, to show that any positive matrix corresponds to CP map, it is sufficient
to show this for the extreme points of the set of positive matrices that have the form $T=|Y\rangle\langle Y|$
with some $Y=\sum_{ij}y_{ij} e_i^A\otimes e_j^B\in H_A\otimes H_B$. These operators act in $H_A\otimes H_B$ as
\[
T(e_k^A\otimes e_l^B)=\bar y_{ij} y_{kl} e_i^A\otimes e_j^B,
\]
and thus have the matrix $T_{ij,kl}=\bar y_{ij} y_{kl}$. By \eqref{eq1posdefmatforCP}, this is the matrix
of the operator  $T(\rho)=V\rho V^*$  with $y_{ik}= \langle e_k^B| Ve_i^A\rangle$.
\end{proof}

As shown in Proposition \ref{propstrucoper}, positive TP maps in $\TC(\C^2)$ can be described
by a vector $a$ and an operator $B$ in $\R^3$. Complete positivity can be fully
characterised in terms of certain inequalities involving $a$ and singular values
 of $T$. This characterization is however rather nontrivial, see \cite{ResSz02}.
An interesting point to note is that the operations given by \eqref{eq1propstrucoper}
with anti-unitary $U$ are positive, but not CP.

Let us point out some basic examples of CP maps.
Pinching map \eqref{eqantiunitdresscondexp} is a CP map, because it is explicitly
defined in the Kraus representation. Given $\rho$, a state on $\tilde H$,
the mapping $\si \mapsto \si\otimes \rho$ from $\TC(H) \to \TC(\tilde H)$
 is clearly CP, and consequently, the partial trace \eqref{eqdefparrtrace}
 is also CP by the duality. In qubits the mappings
 \begin{equation}
\label{eqPauliChan}
T(\rho)=p_0 \rho +\sum_{j=1}^3 \si_j \rho \si_j,
\end{equation}
with a probability distribution $p_0,p_1,p_2,p_3$, are called the
{\it Pauli channels}. They are CP due to the Kraus representation.
In the $d$-dimensional case $\C^d$ with the basis $e_0, \cdots, e_{d-1}$
one introduces the operators $X$ and $Z$ by their actions $Xe_j=e_{j-1}$
for $j>0$ and $Xe_0=e_{d-1}$, and $Ze_j=e^{-2\pi i/d} e_j$ for all $j$.
The {\it generalized Pauli channel}\index{Pauli channel}, defined by the formula
 \begin{equation}
\label{eqPauliChangen}
T(\rho)=\sum_{k,j=0}^{d-1} p_{kj} (X^kZ^j)^* \rho (X^kZ^j),
\end{equation}
where $\{p_{ij}\}$ is a probability law on $\{0, \cdots, d-1\}^2$, is also a CP map.

\section{Elements of the general theory of quantum games}
\label{secgenthequaga}

A general static (simultaneous) game of $N$ players is a triple
$(N, S=S_1\times \cdots \times S_N, \Pi=(\Pi_1, \cdots, \Pi_N)$,
where $S_j$ is the strategy space of $j$th player and $P_j: S\to \R$
is the payoff of the $j$th player. General quantum games can be fit
into this scheme. Namely, a static simultaneous {\it quantum game}\index{quantum game}
of $N$ players with finite-dimensional strategies (and separated actions) can be described by
$N$ finite-dimensional Hilbert spaces $H_1, \cdots, H_N$, an initial state
$\rho$ on $H=H_1\otimes \cdots \otimes H_N$, a POVM $\{M_{\om}\}$, $\om \in \Om$, on $H$
(see \eqref{eqdefPOVM}, \eqref{eqdefPOVM1}) with the set of outcomes $\Om$, the payoff
functions $f=(f_1, \cdots , f_N)$, $f_j:\Om \to \R$, with $f_j(\om)$ being the payoff of $j$th player
for the outcome $\om$, and the choice of strategic spaces $S_j$ for each player,
where $S_j$ is a closed subset of the set of all CP-TC mappings in $\TC(H_j)$.
For a choice (or profile) of strategies $(s_1, \cdots, s_N)\in S$, the final state of the game
is assumed to be $(s_1\otimes \cdots \otimes s_N)\rho$ with the possible outcome $\om$ measured by the POVM $\{M_{\om}\}$
being
\[
{\tr} [M_{\om} (s_1\otimes \cdots \otimes s_N)\rho],
\]
so that the final payoffs can be calculated by the formula
 \begin{equation}
\label{eqgenquagapa}
\Pi_j(s_1, \cdots, s_N)=\sum_{\om\in \Om} f_j(\om) {\tr} [M_{\om} (s_1\otimes \cdots \otimes s_N)\rho].
\end{equation}

Notice that the introduction of POVM generalises both MW and EWL protocols.

Let us say that the quantum game is played with the {\it full strategic spaces}
\index{quantum game!full strategic spaces} if each $S_j$ coincide
with the whole set  CP-TC mappings in $\TC(H_j)$ and the quantum game is played with
 {\it full unitary strategic space} (sometimes referred to in this context as pure strategies)
 \index{quantum game!pure or unitary strategic space} if each $S_j$ arises from the set of all unitary
operators in $H_j$.

The following quantum version of the Nash theorem is a straightforward extension
of its classical counterpart.

 \begin{theorem}
\label{thquantumNash}
Any quantum game played with the full strategic spaces has at least one Nash equilibrium.
\end{theorem}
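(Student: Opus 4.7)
The plan is to reduce the statement to the classical infinite-strategy Nash existence theorem (in the Debreu--Glicksberg--Fan formulation) by verifying its three standard hypotheses for the strategy spaces $S_j$ of CP-TP maps on $\TC(H_j)$ and the payoffs $\Pi_j$ in \eqref{eqgenquagapa}: (i) each $S_j$ is a nonempty convex compact subset of a finite-dimensional topological vector space, (ii) $\Pi_j:S\to \R$ is continuous, and (iii) for every fixed tuple $s_{-j}$ of opponents' strategies the function $s_j\mapsto \Pi_j(s_j,s_{-j})$ is quasi-concave.

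First I would check convexity and compactness of $S_j$. Convexity is immediate: a convex combination of CP maps has a Kraus representation obtained by rescaling and concatenating the Kraus operators of the summands (Theorem \ref{thChoiKrausrep}), and trace preservation is an affine constraint. Compactness follows from Lemma \ref{lempositivearecompact}, which gives compactness of the set of positive trace-preserving contractions, once one shows that the CP-TP maps form a closed subset of it. Closedness is handled most cleanly via Theorem \ref{thposdefmatforCP}: complete positivity is equivalent to positivity of the finite-dimensional Choi matrix \eqref{eqposdefmatforCP}, a closed condition on $T\in \LC(\TC_s(H_j),\TC_s(H_j))$, and trace preservation is also closed.

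Second, continuity of the payoffs is essentially automatic. By \eqref{eqgenquagapa}, $\Pi_j$ is a finite sum of terms of the form $f_j(\om)\,{\tr}[M_{\om}(s_1\otimes\cdots\otimes s_N)\rho]$, and the map $(s_1,\ldots,s_N)\mapsto s_1\otimes\cdots\otimes s_N$ followed by the trace pairing with $M_{\om}\rho$ is multilinear on the finite-dimensional space $\prod_j \LC(\TC_s(H_j))$; hence $\Pi_j$ is multilinear and in particular continuous. Multilinearity also yields (iii) in a very strong form: $s_j\mapsto \Pi_j(s_j,s_{-j})$ is affine, and so trivially quasi-concave and upper semicontinuous.

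With (i)--(iii) in hand, the best-response correspondence $\mathrm{BR}(s)=\prod_j\arg\max_{s_j'\in S_j}\Pi_j(s_j',s_{-j})$ is nonempty-, convex-, and compact-valued (by continuity of $\Pi_j$ and convexity/compactness of $S_j$) and has closed graph (by the Berge maximum theorem applied to each coordinate). Kakutani's fixed point theorem then produces $s^*\in \mathrm{BR}(s^*)$, which is by definition a Nash equilibrium. The only real subtlety in this route is step (i), specifically verifying that completely positive maps form a \emph{closed} subset of all positive trace-preserving contractions; this is the place where Theorem \ref{thposdefmatforCP} (or equivalently Theorem \ref{thChoiKrausrep} together with a uniform bound on the number of Kraus operators) is used in an essential way. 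Everything else is a direct transcription of the classical Nash/Glicksberg argument.
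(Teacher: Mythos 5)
Your proposal is correct and follows essentially the same route as the paper: the paper also reduces to the classical Glicksberg/Nash argument by invoking Lemma \ref{lempositivearecompact} for compactness and convexity of the strategy spaces and the linearity of the payoff \eqref{eqgenquagapa} in each $s_j$. You usefully fill in a detail the paper leaves implicit — that the CP-TP maps form a \emph{closed} (hence compact) subset of the positive trace-preserving contractions, via the Choi-matrix criterion of Theorem \ref{thposdefmatforCP} — but this is an elaboration of, not a departure from, the paper's argument.
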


\begin{proof}
Since the strategic spaces $S_j$ are compact convex sets (as closed subsets of trace preserving positive maps, see
Lemma \ref{lempositivearecompact}) and the payoff function \eqref{eqgenquagapa} is linear on each $S_j$,
the proof is exactly the same as the classical version, or otherwise stated, the claim is a particular case
of the general Glicksberg theorem (see \cite{KolMal} or other books on game theory).
\end{proof}

\begin{remark} Possibly the first precise formulation of this general result appeared in \cite{LeeJo03},
but it was mentioned in particular forms in previous publications.
\end{remark}

\section{Further links and examples}
\label{secfinconq}

Concluding our introduction to quantum games let us note that the literature on this subject
is already quite immense. Further general insights and extensive bibliography can be obtained
from various review papers that include \cite{GuoZhang08}, \cite{KhanReview18},
\cite{GrabbeGameComput}. There one can find also references to the big chunk of work devoted
to building various quantized versions of all standard examples of classical games (various
social dilemmas, etc, like the quantum versions of Monty Hall problem in \cite{FitneyMontyHall02}
and \cite{Chuan01}, and of the Trucker Game in \cite{DuLi02}, see also  "clever Alice'' and
"stupid Alice'' from \cite{Grib}).
Let us indicate some trends of research which were not even touched upon in our presentation.
These trends include the repeated or iterated quantum games initiated in \cite{KayRepGame01}, the analysis of the links
of quantum games with the Bayesian games of incomplete information (see \cite{CheonIqBqyes}), and an interesting activity on the
expressing (interpreting) in game-theoretic term the fundamental properties of quantum nonlocality and its optimal quantitative
characteristics, see e.g. \cite{FlitneyBell09} and \cite{IqAbbBell}, linking the theory of games with the fundamental problems of
quantum communication and teleportation. Another natural development, which is seemingly not explored so far, would be
the theory of dynamic games built on the basis of quantum filtering, as was initiated in 
\cite{Bel87} and \cite{Kol92} for quantum control.
Finally, the initial paper \cite{MeyerD99} being motivated by problems in quantum
computation and cryptology, this link is of great importance, see e.g. \cite{GrabbeGameComput}.
As the simplest example illustrating this link let us describe briefly
the well known 'Guess the number' game.

Recall that $n$-qubit systems can be described by the Hilbert space $H^{\otimes n}$, which is the tensor product of
$n$ two-dimensional spaces $H=\C^2$. The space $H^{\otimes n}$ has dimension $2^n$ and its natural basis can be
represented by $2^n$ vectors of the form $|x\rangle$, where
\[
x=|x_{n-1} \cdots x_0\rangle =|x_{n-1} \rangle \otimes \cdots \otimes |x_0\rangle
\]
is a string of $n$ symbols $x_j$
with values $0$ or $1$ representing the binary expansion of the corresponding
integer $x$. Let $x\cdot y$ denotes the modulo $2$ scalar product of these expansions:
\[
x\cdot y =(x_{n-1}y_{n-1} + \cdots + x_0y_0) (mod \,\, 2).
\]
The {\it Hadamard operator} or {\it Hadamard-Walsh operator}\index{Hadamard-Walsh operator} on a qubit
is the transformation of $\C^2$ given by the matrix
\[
W=\frac{1}{\sqrt 2}\left(\begin{aligned} & 1 \quad \quad 1 \\ & 1 \quad -1 \end{aligned} \right),
\]
or equivalently by its action on the standard basis:
\[
W|0\rangle =\frac{1}{\sqrt 2}(|0\rangle+|1\rangle),
\quad W|1\rangle =\frac{1}{\sqrt 2}(|0\rangle-|1\rangle).
\]
The {\it Hadamard-Walsh operator}\index{Hadamard-Walsh operator} on $n$-qubit system
is the tensor product $W^{\otimes n}$ acting as
\[
W^{\otimes n} \left(|x_{n-1} \rangle \otimes \cdots \otimes |x_0\rangle\right)
=W|x_{n-1} \rangle \otimes \cdots \otimes W|x_0\rangle.
\]
Clearly $W$ is a unitary operator such that $[W^{\otimes n}]^2=\1$. Direct computation shows that
 \begin{equation}
\label{eqWalshHad}
W^{\otimes n} |0\cdots 0 \rangle=2^{-n/2} \sum_{x=0}^{2^{n-1}-1} |x\rangle,
\end{equation}
the r.h.s. being the uniform mixtures of all basis states of $H^{\otimes n}$, and generally
 \begin{equation}
\label{eqWalshHad1}
W^{\otimes n} |y\rangle=\sum_{x=0}^{2^{n-1}-1} (-1)^{x\cdot y}|x\rangle.
\end{equation}

The {\it Bernstein-Vazirani oracle}\index{Bernstein-Vazirani oracle}
 with a parameter $a\in H$ is the transformation of $H$ defined by the following
action on the basic vectors $T_a |x\rangle = (-1)^{a\cdot x} |x\rangle$.
The {\it 'Guess the number' game}\index{Guess the number game} we are talking here is the game between Alice and Bob, where
Alice chooses a number $a$ and Bob has to guess it by asking the result of the action of the oracle
on some vectors. How many question Bob has to ask to guess $a$? Classically, when he can use only the basis vectors
$|x\rangle$, he needs effectively to get the results for all $2^n$ vectors thus asking $2^n$ questions.
Remarkably enough, using the full space $H^{\otimes n}$ he can find the answer just with one question. Namely,
Bob prepares the initial state
\[
|\psi\rangle=2^{-n/2} \sum_{x=0}^{2^{n-1}-1} |x\rangle,
\]
and asks Alice to give him the result of $T_a |\psi\rangle$, that is,
\[
T_a |\psi\rangle=2^{-n/2} \sum_{x=0}^{2^{n-1}-1} (-1)^{a\cdot x}|x\rangle =W^{\otimes n} |a\rangle.
\]
It remains for Bob to apply another $W^{\otimes n}$ to get the required number
$a =W^{\otimes n} T_a |\psi\rangle  =W^{\otimes n} W^{\otimes n} |a\rangle$.


\begin{thebibliography}{99}


\bibitem{Alicki}
R. Alicki and K. Lendi.
Quantum Dynamical Semigroups and Applications.
Springer-Verlag, Berlin Heidelberg 2007.

\bibitem{Bel87}
 V. P. Belavkin, Nondemolition measurement and control in quantum dynamical systems. In: Information Complexity and Control in Quantum Physics, CISM Courses and Lectures 294, S. Diner and G. Lochak, eds., Springer-Verlag, Vienna, 1987, pp. 331–336.

\bibitem{Ben00}
S.C. Benjamin. Comment on "A quantum approach
to static games of complete information"
ArXiv: quant-ph/000812 (2000).


\bibitem{BH01}
S. C. Benjamin and P. M. Hayden.
Comments on "Quantum Games and Quantum Strategies". Phys. Rev. Lett 87:6 (2001), 069801.


\bibitem{BH01a}
S.C. Benjamin and P. M.Hayden. Multi-Player Quantum Games.
ArXiv: quant-ph/0007038v2 (2001).


\bibitem{CheonIqBqyes}
T. Cheon and A. Iqbal. Bayesian Nash Equilibria and Bell Inequalities.
Journal of the Physical Society of Japan. 77:2 (2008), 024801.


\bibitem{Chuan01} Ch.-F. Li, Y.-Sh. Zhang, Y.-F. Huang and G.-C. Guo.
Quantum strategies of quantum measurements.
Physics Letters A 280 (2001), 257 - 260.


\bibitem{Davies76}
E.B. Davies. Quantum Theory of Open Systems, 1976.


\bibitem{DuLi01}
J. Du, H. Li, X. Xu, M. Shi, X. Zhou, R. Han.
Remark on Quantum Battle of the Sexes Game.
ArXiv: quant-ph/0103004 (2001).


\bibitem{DuLi01a}
J. Du, H. Li, X. Xu, M. Shi, X. Zhou, R. Han. Nash equilibrium in the Quantum Battle of the Sexes Game.
ArXive: quant-ph/0010050v3 (2001).


\bibitem{DuLi03}
 J. Du, H. Li, X. Xu, X. Zhou, R. Han. Phase-transition-like Behavior of Quantum Games.
ArXiv: quant-ph/0111138v4 (2003).


\bibitem{DuLi02}
J. Du, H. Li, X. Xu, X. Zhou, R. Han. Multi-Player and Multi-Choice Quantum Game.
ArXiv: quant-ph/0010092v4 (2002).


\bibitem{DuLi03a}
J. Du, H. Li, X. Xu, X. Zhou, R. Han. Entanglement Enhanced Multiplayer Quantum Games.
Physics Letters A, 302 (2002), 229-233.


\bibitem{FitneyMontyHall02}
A.P. Flitney and D. Abbot. Quantum version of the Monty Hall problem. Phys Rev A 65 (2002), 062318.


\bibitem{FlitneyBell09}
A.P. Flitney et al. Equivalence between Bell inequalities and quantum minority games.
Phys Lett A 373 (2009), 521-524.


\bibitem{EWL99}
 J. Eisert, M. Wilkens and M. Lewenstein.
Quantum Games and Quantum Strategies. Phys Rev Lett 83:15 (1999), 3077 - 3080.




\bibitem{EnglertTwoQubit01}
B.-G. Englert, Ch. Kurtsiefer and H. Weinfurter.
Universal unitary gate for single-photon two-qubit states. Phys. Rev A 63, 032303.


\bibitem{Frack13}
P. Frackiewicz. A new model for quantum games based in the Marinatto-Weber approach.
J. Phys. A: Math. Theor. 46 (2013), 275301.


\bibitem{GrabbeGameComput} J. Orlin Grabbe.
An introduction to Quantum Game Theory. ArXiv:quant-ph/0506219 (2005).


\bibitem{Grib}
A.A. Grib and G. N.  Parf\"enov.
Can a game be quantum? (Russian)
Zap. Nauchn. Sem. S.-Peterburg. Otdel. Mat. Inst. Steklov. (POMI) 291 (2002),
Vopr. Kvant. Teor. Polya i Stat. Fiz. 17, 131--154, 280; translation in
J. Math. Sci. (N.Y.) 125:2 (2005), 173 - 184.


\bibitem{GuoZhang08}
 H. Guo, J. Zhang and G. J. Koehler. A survey of quantum games.
 Decision Support Systems 46 (2008), 318-332.




\bibitem{Hayashi} M. Hayashi. Quantum Information Theory. Mathematical foundation. Second Edition.
Springer Graduate Texts in Physics. Springer, Berlin, 2006, 2017.


\bibitem{IqAbbBell}
A. Iqball and D. Abbot. Construting quantum games from a system of Bell's inequalities.
Phys Lett A 374 (2010), 3155-3163.


\bibitem{IqbalToRD}
A. Iqbal and A.H. Toor. Equilibria of Replicator Dynamics in Quantum Games.
ArXiv: quant-ph/0106135.


\bibitem{IqbalTo02}
A. Iqbal and A.H. Toor.
Quantum mechanics gives stability to a Nash equilibrium.
Phys. Rev. A 65:2 (2002), 022306.


\bibitem{IqbalTo02a}
A. Iqbal and A. H. Toor. Darwinism in quantum systems? Phys. Lett. A 294 (2002), no. 5-6, 261–270.


\bibitem{IqbalTo02b}
A. Iqbal and A. H. Toor. Quantum cooperative games.
Phys. Lett. A 293 (2002), no. 3-4, 103–108.


\bibitem{KayRepGame01}
 R. Kay, N. Johnson and S. C. Benjamin. Evolutionary quantum game.
 J. Phys. A: Math. Gen 34 (2001), L547-L552.


\bibitem{KhanReview18}
 F. S. Khan, N. Solmeyer, R. Balu and T. Humble. Quantum games:
a review of the history, current state, and interpretation.
Quantum Information Processing (2018),  17:309.

\bibitem{Kol92}
 V.N. Kolokoltsov.
 {\sl The stochastic Bellman equation as a nonlinear equation
in Maslov spaces. Perturbation theory. } Dokl. Akad. Nauk
{\bf 323:2} (1992), 223-228.
Engl. transl. in Sov. Math. Dokl. {\bf 45:2} (1992), 294-300.

\bibitem{KolMal} Vassili N. Kolokoltsov and O.A. Malafeyev. Understanding Game theory.
World scientific, 2010.

\bibitem{Kraus} K. Kraus. States, Effects, and Operations.
Springer Lecture Notes in Physics,
v. 190.  Springer, Berlin, 1983.


\bibitem{LiContvarquantgame02}
H. Li, J. Du and S. Massar. Continuous variable quantum games.
Physics Letters A 306 (2002), 73-78.


\bibitem{LeeJo03}
Ch. F. Lee and N. F. Johnson. Efficiency and formalism of quantum games.
Phys Rev A 67 (2003), 022311.


\bibitem{LoStackelberg03}
C. F. Lo and D. Kiang. Quantum Stackelberg duopoly.
Physics Letters A 318 (2003), 333-336.


\bibitem{MW00}
L. Marinatto and T. Weber. A quantum approach to static games of
complete information.
Physics Letters A 272 (2000), 291-303.

\bibitem{MajevPosit12}
W. Majewski. On the structure of positive maps: Finite-dimensional case. J. Math. Phys. 53 (2012), 023515.

\bibitem{Mendes05} R. V. Mendes. The Quantum Ultimatum Game.
Quantum Infomation Processsing 4:1 (2005), 1-12.


\bibitem{MeyerD99}
 D. A. Meyer. Quantum strategies. Phys Rev Lett 82:5 (1999), 1052-1055.


\bibitem{Meyer93} P.-A. Meyer.
Quantum Probability for Probabilists. Springer, 1993.

\bibitem{Naimark} M. A. Naimark. Normed Algebra. Wolters-Noordhoff, Groningen, 1972.
Translation from Russian, Moscow, Nauka, 1968.

\bibitem{ResSz02}
M. B. Ruskai, S. Szarek and
E. Werner. An analysis of completely positive
trace-preserving maps on $\MC_2$.
Linear Algebra and its Applications 347 (2002), 159 - 187.


\bibitem{Selleri}
F. Selleri. Quantum Paradoxes and Physical Reality. Kluwer Academic, 1990.


\bibitem{Stinespring} W. F. Stinespring.
Positive functions on $C^*$ -algebras.
Proc. Amer. Math. Soc. 6 (1955), 211 - 216.

\bibitem{Stormer}
E. Stormer. Positive linear maps of operator algebras.
Acta Math. 110 (1963), 233-278.

\bibitem{WoronPos76}
S.L. Woronowicz. Positive maps of low dimensional matrix algebras. Rep. Math. Phys. 10:2 (1976), 165-183.

\bibitem{ZhouMulytiplayerQuantContStrat05}
J. Zhou, L. Ma and Y. Li. Mulytiplayer quantum games with continous-variable strategies.


\end{thebibliography}
\end{document}